\def\re{\mathbb{R}}
\def\N{\mathbb{N}}
\def\s{\mathbb{S}^{N-1}}
\def\tc{\textcolor}
\def\({\left(}
\def\){\right)}
\def\[{\left[}
\def\]{\right]}
\def\pd{\partial}
\def\lap{\Delta}
\def\ep{\varepsilon}
\def\w{\omega}
\def\la{\lambda}
\newtheorem{ThmA}{Theorem A}
\begin{document}

\title{Improvements and generalizations of two Hardy type inequalities and their applications to the Rellich type inequalities %\thanks{Grants or other notes
%about the article that should go on the front page should be
%placed here. General acknowledgments should be placed at the end of the article.}
}
%\subtitle{Do you have a subtitle?\\ If so, write it here}

\titlerunning{Improved Hardy and Rellich type inequalities}        % if too long for running head

\author{Megumi Sano         %\and
        %Second Author %etc.
}

%\authorrunning{Short form of author list} % if too long for running head

\institute{Megumi Sano \at
              Laboratory of Mathematics, School of Engineering,
Hiroshima University\\
Higashi-Hiroshima, 739-8527, Japan \\
              Tel.: +81-82-424-7149\\
              %Fax: +123-45-678910\\
              \email{smegumi@hiroshima-u.ac.jp}           %  \\
%             \emph{Present address:} of F. Author  %  if needed
           %\and
           %S. Author \at
             % second address
}

\date{Received: date / Accepted: date}
% The correct dates will be entered by the editor

\maketitle

\begin{abstract}
We give improvements and generalizations of both the classical Hardy inequality and the geometric Hardy inequality based on the divergence theorem. Especially, our improved Hardy type inequality derives both two Hardy type inequalities with best constants. 
Besides, we improve two Rellich type inequalities by using the improved Hardy type inequality.
%for radially symmetric functions while keeping their best constants. 
%Especially, in the Hilbertian case $p=2$, we show that the assumption of radially symmetry can be removed. %This also gives a partial answer of the conjecture in \cite{S(Rellich)}. 
%Finally, we study symmetry breaking phenomena of these improved Hardy-Rellich type inequalities.% which are related to the improved Rellich inequalities.

\keywords{The Hardy inequality \and Higher order \and Optimal constant  \and improvement}
% \PACS{PACS code1 \and PACS code2 \and more}
\subclass{35A23 \and 26D10 \and 58E40}
\end{abstract}

\section{Introduction}\label{S intro}

One dimensional Hardy type inequality
\begin{align}\label{1dim H}
\( \frac{p-1-a}{p} \)^p \int_0^\infty  \( \frac{1}{x} \int_0^x f(t) \,dt \)^p x^a \,dx \le  \int_0^\infty f^p(x) x^a \,dx
\end{align}
holds for all measurable nonnegative function $f$, where $p>1$ and $a < p-1$ (Ref. \cite{H(1920),HLP}). Concerning a history of (\ref{1dim H}), see \cite{Kbook}. 
On the other hand, higher dimensional version of (\ref{1dim H}) are two main inequalities. One is the classical Hardy inequality with an interior singularity
\begin{equation}
\label{H_p}
\( \frac{N-\alpha}{p} \)^{p} \int_{B_R} \frac{|u|^{p}}{|x|^\alpha} dx \le \int_{B_R} \frac{\left| \nabla u\right|^p}{|x|^{\alpha-p}} dx
\end{equation}
%holds for all $u \in W^{1,p}_0(B_R)$, where $W_0^{1,p}(B_R)$ is the completion of $C_c^{\infty}(B_R)$ with respect to the norm $\| \nabla (\cdot )\|_{L^p(B_R)}$. 
for $u \in C_c^1 (B_R)$, where $B_R \subset \re^N, N \ge 2, 1 < p < \infty$, and $\alpha < N$. 
%The inequality (\ref{H_p}) is a class of the Caffarelli-Kohn-Nirenberg inequality and the special case where $\alpha = p$ is called the Hardy inequality.  
%It is known that the best constant $(\frac{N-p}{p})^p$ is not attained. 
Especially, in the case where $\alpha =p$, it is known that Hardy's best constant $\( \frac{N-p}{p} \)^p$ plays an important role to investigate several properties of solution to elliptic and parabolic partial differential equations, for example, stability of solution, instantaneous blow-up solution, global-in-time solution, see \cite{BV,BG}, to name a few. 
The other is the geometric Hardy type inequality with a boundary singularity
\begin{align}\label{H_p geo}
\(  \frac{\beta -1}{p}  \)^{p} \int_{B_R} \frac{|u|^p}{ {\rm dist}(x, \pd B_R)^{\,\beta}} \,dx
\le \int_{B_R} \frac{|\nabla u|^p}{ {\rm dist}(x, \pd B_R)^{\,\beta -p}} \,dx.
\end{align}
for $u \in C_c^1 (B_R)$, where $1< p< \infty, \beta >1$, and dist $(x, \pd B_R) = R- |x|$. The inequality (\ref{H_p geo}) also holds for general bounded domain, and its best constant depends on the geometry of the domain, see \cite{BM,BFT,BT}, to name a few. 

%The following improved Hardy type inequality is obtained by \cite{I} based on transformation approach.
%\begin{equation}\label{IH}
%\( \frac{N-\alpha}{p} \)^{p} \int_{B_R} \frac{|u|^{p}}{|x|^\alpha \( 1- \( \frac{|x|}{R} \)^{\frac{N-\alpha}{p-1} }\)^p} dx \le \int_{B_R} \frac{\left| \nabla u \cdot \frac{x}{|x|} \right|^p}{|x|^{\alpha-p}} dx.
%\end{equation}
%In fact, it is shown that the inequality (\ref{IH}) is equivalent to the inequality (\ref{H_p}) on the whole space $\re^N$ via a transformation, see \cite{I} or \S \ref{Sec trans}. 
%On the other hand, another Hardy type inequality is shown by \cite{TF,MOW,S(Rellich)} for $\beta >1$ based on the divergence theorem as follows.
%\begin{equation}\label{IH log}
%\( \frac{\beta -1}{p} \)^{p} \int_{B_R} \frac{|u|^{p}}{|x|^\alpha \( \log \frac{R}{|x|} \)^\beta } dx \le \int_{B_R} \frac{\left| \nabla u \cdot \frac{x}{|x|} \right|^p}{|x|^{\alpha-p}\( \log \frac{R}{|x|} \)^{\beta-p}} dx.
%\end{equation}
%Temporarily, set $\beta =p$. Since
%\begin{align*}
%\frac{p-1}{N-\alpha} \left[ 1- \( \frac{|x|}{R} \)^{\frac{N-\alpha}{p-1}} \right]  \le \log \frac{R}{|x|} \quad \text{for any}\,\,x \in B_R,
%\end{align*}
%we observe that the improved inequality (\ref{IH}) is the strongest out of the three. Also, every Hardy type inequality (\ref{H_p}), (\ref{IH}), (\ref{IH log}) has the same structures, for e.g. scale invariance structure, non-existence of the extremal function and so on. 

One of aims is to combine two Hardy type inequalities (\ref{H_p}), (\ref{H_p geo}), namely, to give the $N-$dimensional Hardy type inequality which derives both two Hardy type inequalities (\ref{H_p}), (\ref{H_p geo}) with best constants.
The first result is as follows.

%%%%%%%%%%%%%%%%%%%%%%%%%%%

\begin{theorem}\label{T IH}%\textcolor{white}{a}\\
Let $1 \le p < \infty, 1< \beta < \infty, \gamma > 0$, and $\alpha \le N-(\beta -1)\gamma$. Then the inequality
\begin{equation}\label{IH gene}
\( \frac{\beta -1}{p} \gamma \)^{p} \int_{B_R} \frac{|u|^{p}}{|x|^\alpha \( 1- \( \frac{|x|}{R} \)^\gamma \)^\beta } dx \le \int_{B_R} \frac{\left| \nabla u \cdot \frac{x}{|x|} \right|^p}{|x|^{\alpha-p}\( 1-\( \frac{|x|}{R} \)^\gamma \)^{\beta-p}} dx
\end{equation}
holds for all $u \in C_c^1(B_R)$. Furthermore, the constant $(\frac{\beta-1}{p} \gamma)^p$ in (\ref{IH gene}) is optimal and is not attained for $u \not\equiv 0$ for which the right-hand side is finite.
\end{theorem}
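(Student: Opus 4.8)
The plan is to prove inequality (\ref{IH gene}) by the divergence-theorem method, using a single radial vector field, and then to read off the optimality and the non-attainment of the constant from the equality cases of the two elementary estimates that enter that proof.

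\emph{The inequality.} Set $r=|x|$, $h(r)=1-(r/R)^{\gamma}$, and consider the radial field
$$
\mathbf{F}(x)=\frac{1}{|x|^{\alpha-1}\,h(|x|)^{\beta-1}}\,\frac{x}{|x|},\qquad x\in B_R\setminus\{0\}.
$$
Since $h'(r)=-\frac{\gamma}{r}\bigl(1-h(r)\bigr)$, writing $\mathrm{div}\,\mathbf{F}=g'(r)+\frac{N-1}{r}\,g(r)$ with $g(r)=r^{-(\alpha-1)}h(r)^{-(\beta-1)}$ gives
$$
\mathrm{div}\,\mathbf{F}=\bigl(N-\alpha-(\beta-1)\gamma\bigr)\frac{1}{|x|^{\alpha}h^{\beta-1}}+(\beta-1)\gamma\,\frac{1}{|x|^{\alpha}h^{\beta}},
$$
so the hypothesis $\alpha\le N-(\beta-1)\gamma$ makes the first summand nonnegative, whence $\mathrm{div}\,\mathbf{F}\ge(\beta-1)\gamma\,|x|^{-\alpha}h^{-\beta}$ on $B_R\setminus\{0\}$. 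Multiplying by $|u|^{p}$, integrating over $B_R$ and integrating by parts — no boundary term appears, because $u$ is compactly supported in $B_R$ near the outer sphere and $|x|^{N-1}|\mathbf{F}|=O(|x|^{N-\alpha})\to0$ as $x\to0$ (here $\alpha<N$) — and then using $\nabla(|u|^{p})=p|u|^{p-2}u\,\nabla u$ together with $\nabla u\cdot\mathbf{F}=g(|x|)\,\nabla u\cdot\frac{x}{|x|}$, one arrives at
$$
(\beta-1)\gamma\int_{B_R}\frac{|u|^{p}}{|x|^{\alpha}h^{\beta}}\,dx\ \le\ p\int_{B_R}\frac{|u|^{p-1}\,\bigl|\nabla u\cdot\frac{x}{|x|}\bigr|}{|x|^{\alpha-1}h^{\beta-1}}\,dx .
$$
For $p>1$ I would split the integrand on the right as the product of $|u|^{p-1}|x|^{-\alpha/p'}h^{-\beta/p'}$ and $\bigl|\nabla u\cdot\frac{x}{|x|}\bigr|\,|x|^{-(\alpha-p)/p}h^{-(\beta-p)/p}$, whose weights multiply back to $|x|^{-(\alpha-1)}h^{-(\beta-1)}$, apply Hölder's inequality with exponents $p'=p/(p-1)$ and $p$, divide by $\bigl(\int_{B_R}|u|^{p}|x|^{-\alpha}h^{-\beta}\,dx\bigr)^{1/p'}$, and raise the result to the power $p$; for $p=1$ no Hölder step is needed and the displayed inequality is already (\ref{IH gene}). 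The only technical point is the standard one: if the left-hand side of (\ref{IH gene}) is infinite, one first runs the argument with $u$ replaced by a truncation (say $u$ times a cutoff near the origin, or $\mathrm{sgn}(u)\min\{|u|,k\}$) and then passes to the limit.

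\emph{Optimality and non-attainment.} Equality in (\ref{IH gene}) forces equality in the bound for $\mathrm{div}\,\mathbf{F}$ (so $\alpha=N-(\beta-1)\gamma$), in Hölder's inequality, and in $-|u|^{p-2}u\,(\nabla u\cdot\frac{x}{|x|})\le|u|^{p-1}|\nabla u\cdot\frac{x}{|x|}|$. The last two conditions force $|u(x)|=c(x/|x|)\,|\phi(|x|)|$ with $|u|$ nonincreasing in $|x|$ and $|\phi|=\mathrm{const}\cdot|\phi'|\,r\,h(r)$; integrating the ODE $(\ln|\phi|)'=-\mathrm{const}/(r\,h(r))$ and using $\int dr/(r\,h(r))=\gamma^{-1}\ln\bigl((r/R)^{\gamma}/(1-(r/R)^{\gamma})\bigr)+\mathrm{const}$ yields, up to constants, $\phi(r)=\bigl((1-(r/R)^{\gamma})/(r/R)^{\gamma}\bigr)^{\mu}$ for some $\mu>0$. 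A short computation then shows that for every $\mu>0$ the right-hand side of (\ref{IH gene}) is infinite for this $\phi$ (it diverges at $r=0$ if $\mu\ge(\beta-1)/p$ and at $r=R$ if $\mu\le(\beta-1)/p$), so no nonzero $u$ with finite right-hand side can achieve equality — which is the non-attainment claim. For the sharpness of $\bigl(\frac{\beta-1}{p}\gamma\bigr)^{p}$ I would test (\ref{IH gene}) with radial functions obtained from $\phi(r)=\bigl((1-(r/R)^{\gamma})/(r/R)^{\gamma}\bigr)^{\mu}$, $\mu=\frac{\beta-1}{p}+\varepsilon$, truncated near $r=0$ and near $r=R$; as $\varepsilon\downarrow0$ the dominant contribution to both sides comes from a fixed neighborhood of $r=R$, where $h(r)\approx\frac{\gamma}{R}(R-r)$ and $\phi(r)\approx\mathrm{const}\cdot(R-r)^{\mu}$, and the ratio of the two sides tends to $\bigl(\frac{\beta-1}{p}\gamma\bigr)^{p}$. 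Conceptually, the substitution $y=R\bigl((r/R)^{\gamma}/(1-(r/R)^{\gamma})\bigr)^{1/\gamma}$ — namely the formal extremizer — turns the radial form of (\ref{IH gene}) into a one-dimensional Hardy inequality on $(0,\infty)$ carrying a weight that is bounded, equals $1$ at $y=0$, and behaves like $(y/R)^{(\beta-1)\gamma-(N-\alpha)}$ as $y\to\infty$, so that near $y=\infty$ one is left with a sharp one-dimensional Hardy inequality of the type (\ref{1dim H}) with exactly the constant $\bigl(\frac{\beta-1}{p}\gamma\bigr)^{p}$.

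The step I expect to be the main obstacle is this last one — the sharpness and non-attainment — where the care lies in getting the exponents of the formal extremizer right, in checking that its right-hand side is always infinite, and, for the minimizing sequence, in balancing the two cutoff scales so that the neighborhood of $r=R$ dominates while both the term $\int_{B_R}|u|^{p}|x|^{-\alpha}h^{-(\beta-1)}\,dx$ discarded at the divergence step and the contribution near the origin remain negligible. The vector-field step itself is routine once the field $\mathbf{F}$ has been guessed, and the borderline case $\alpha=N-(\beta-1)\gamma$ is handled by the very same minimizing sequence.
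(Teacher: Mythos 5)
Your proposal follows the paper's approach essentially verbatim: the paper also computes the divergence of the radial field $x|x|^{(\beta-1)\gamma}/(1-|x|^\gamma)^{\beta-1}$, drops the $(N-\alpha-(\beta-1)\gamma)$ term, applies H\"older, and reads the virtual extremizer $(|x|^{-\gamma}-1)^{(\beta-1)/p}$ off the equality conditions, then shows the right-hand side diverges for it. The only deviation is your sharpness test function: you take the full formal extremizer $\bigl((1-(r/R)^\gamma)/(r/R)^\gamma\bigr)^{\mu}$ with $\mu\downarrow(\beta-1)/p$ and truncate at both ends, which forces the two-scale balancing you flag as the main obstacle, whereas the paper uses the cleaner $f_A=\phi_\delta(x)\,(1-|x|^\gamma)^A$ with $A\downarrow(\beta-1)/p$ supported only near $\partial B_R$, so that no issue at the origin ever arises.
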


We see that our inequality (\ref{IH gene}) derives several Hardy type inequalities  while keeping their best constants. 

\begin{corollary}\label{C IH first}
(I) Let $1 < p < \infty, \gamma = \frac{N-\alpha}{p-1} > 0$. Then the following inequalities hold for any functions $u \in C_{c}^{1}(B_R)$:
\begin{equation*}
\( \frac{N-\alpha}{p} \)^{p} \int_{B_R} \frac{|u|^{p}}{|x|^\alpha} dx  \le \( \frac{p -1}{p} \gamma \)^{p} \int_{B_R} \frac{|u|^{p}}{|x|^\alpha \( 1- \( \frac{|x|}{R} \)^\gamma \)^p } dx \le \int_{B_R} \frac{\left| \nabla u \cdot \frac{x}{|x|} \right|^p}{|x|^{\alpha-p}} \, dx
\end{equation*}

\noindent
(II) Let $1\le p \le N -(\beta -1)$ and $\beta >1$. Then the following inequalities hold for any functions $u \in C_{c}^{1}(B_R)$:
\begin{align*}
\(  \frac{\beta -1}{p}  \)^{p} \int_{B_R} \frac{|u|^p}{ {\rm dist}(x, \pd B_R)^{\,\beta}} \,dx
&\le \(  \frac{\beta -1}{p}  \)^{p} R^{p-\beta} \int_{B_R} \frac{|u|^p}{|x|^{p} \( 1- \frac{|x|}{R}  \)^{\beta}} \,dx \\
&\le R^{p-\beta}  \int_{B_R} \frac{\left| \nabla u \cdot \frac{x}{|x|} \right|^p}{\( 1- \frac{|x|}{R} \)^{\beta -p} }\,dx 
\le \int_{B_R} \frac{\left| \nabla u \cdot \frac{x}{|x|} \right|^p}{ {\rm dist}(x, \pd B_R)^{\,\beta -p}} \,dx
\end{align*}

\noindent
(III) Let $N \ge 3$. Then the following inequalities hold for any functions $u \in C_{c}^{1}(B_1)$:
\begin{align}\label{HSM ineq}
\int_{B_1} \frac{|u|^2}{(1-|x|^2)^2} \,dx
\le \int_{B_1} \frac{|u|^2}{|x|^{2} \( 1- |x|^2  \)^{2}} \,dx 
\le  \int_{B_1} \left| \nabla u \cdot \frac{x}{|x|} \right|^2 \,dx 
%\le \int_{B_1} |\nabla u|^2 \,dx
\end{align}
\end{corollary}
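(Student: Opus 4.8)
The plan is to deduce all three parts directly from Theorem \ref{T IH}, which already carries the entire analytic content; the corollary then reduces to choosing the free parameters $p,\beta,\gamma,\alpha,R$ so that one of the exponents on the right-hand side of (\ref{IH gene}) collapses to $0$, and to comparing weights on $B_R$ by means of the two trivial facts $0\le 1-(|x|/R)^\gamma\le 1$ and $|x|\le R$ (equivalently ${\rm dist}(x,\pd B_R)=R(1-|x|/R)$).

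For part (I) I would take $\beta=p$ and $\gamma=\frac{N-\alpha}{p-1}$. Then the admissibility hypothesis $\alpha\le N-(\beta-1)\gamma$ of Theorem \ref{T IH} holds with equality, the exponent $\beta-p$ in the denominator on the right of (\ref{IH gene}) is $0$, and $\frac{\beta-1}{p}\gamma=\frac{p-1}{p}\gamma=\frac{N-\alpha}{p}$, so (\ref{IH gene}) is exactly the rightmost inequality of (I). The leftmost inequality of (I) is then immediate: both sides carry the common constant $\bigl(\frac{N-\alpha}{p}\bigr)^p$, and $\bigl(1-(|x|/R)^\gamma\bigr)^{-p}\ge 1$ on $B_R$, so the middle integrand dominates the left one pointwise.

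For part (II) I would take $\gamma=1$ and $\alpha=p$. The admissibility condition then becomes precisely the standing hypothesis $p\le N-(\beta-1)$, the exponent $\alpha-p$ on the right of (\ref{IH gene}) vanishes, and the constant is $\bigl(\frac{\beta-1}{p}\bigr)^p$; multiplying the resulting inequality through by $R^{\,p-\beta}$ produces the middle inequality of (II). The first and last inequalities of (II) are then pure algebra: writing ${\rm dist}(x,\pd B_R)^{\,\beta}=R^{\beta}(1-|x|/R)^{\beta}$ reduces the first one to $|x|^p\le R^p$, and ${\rm dist}(x,\pd B_R)^{\,\beta-p}=R^{\beta-p}(1-|x|/R)^{\beta-p}$ makes the last one an identity.

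Finally, for part (III) I would specialize to $p=\beta=\gamma=2$, $R=1$, $\alpha=2$; then $\bigl(\frac{\beta-1}{p}\gamma\bigr)^p=1$ and both exponents $\alpha-p$ and $\beta-p$ on the right of (\ref{IH gene}) vanish, giving the second inequality in (\ref{HSM ineq}), while the first inequality in (\ref{HSM ineq}) follows from $|x|^2\le 1$ on $B_1$. I expect this last part to be the only delicate point: with $\alpha=2$ the condition $\alpha\le N-(\beta-1)\gamma$ reads $2\le N-2$, i.e.\ $N\ge 4$, so to reach the claimed range $N\ge 3$ one should check whether the middle term of (\ref{HSM ineq}) is really meant to carry the weight $|x|^{-2}$ rather than the milder $|x|^{-1}$ (for which $\alpha=1\le N-2$ already holds as soon as $N\ge 3$), and whether the displayed chain of three inequalities still closes at the endpoint $N=3$. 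Apart from that single endpoint, no step presents a genuine obstacle: everything is bookkeeping with the admissibility bound and with the signs of $\beta-p$ and $p-\beta$ when powers of $R$ are carried across the inequalities.
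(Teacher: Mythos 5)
Your reductions for (I) and (II) are correct and coincide with the paper's intended (though unstated) deduction from Theorem~\ref{T IH}. In (I), $\beta=p$ and $\gamma=\frac{N-\alpha}{p-1}$ make $\beta-p=0$ on the right of (\ref{IH gene}) and give $\frac{\beta-1}{p}\gamma=\frac{N-\alpha}{p}$, while $\bigl(1-(|x|/R)^\gamma\bigr)^{-p}\ge1$ handles the left inequality. In (II), $\gamma=1$ and $\alpha=p$ make $\alpha-p=0$, turn the admissibility condition $\alpha\le N-(\beta-1)\gamma$ into $p\le N-(\beta-1)$, and multiplying through by $R^{p-\beta}$ gives the middle inequality; the outer two inequalities follow from ${\rm dist}(x,\partial B_R)=R(1-|x|/R)$ together with $|x|\le R$, exactly as you say.

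The gap you flag in (III) is genuine, and in fact it is not merely that Theorem~\ref{T IH} fails to apply: the middle inequality of (\ref{HSM ineq}) is \emph{false} for $N=3$. Take $u_\delta=(1-|x|^2)\phi_\delta$ with $\phi_\delta\in C_c^\infty(B_1)$ a radial cutoff, $\phi_\delta\equiv1$ on $B_{1-2\delta}$, $\phi_\delta\equiv0$ outside $B_{1-\delta}$, $|\nabla\phi_\delta|\le C\delta^{-1}$. Because $1-|x|^2$ vanishes to first order at $\partial B_1$ and both weights are bounded near $\partial B_1$, both sides converge as $\delta\to0$ to the values obtained with $u=1-|x|^2$:
\begin{align*}
\int_{B_1}\frac{|u|^2}{|x|^2\,(1-|x|^2)^2}\,dx=\omega_{2}\int_0^1 dr=4\pi,
\qquad
\int_{B_1}\left|\nabla u\cdot\frac{x}{|x|}\right|^2 dx=\omega_{2}\int_0^1 4r^4\,dr=\frac{16\pi}{5},
\end{align*}
and $4\pi>16\pi/5$. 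Thus the chain of three inequalities in (III) can only hold for $N\ge4$, which is precisely the admissibility bound $\alpha\le N-(\beta-1)\gamma$ for $p=\beta=\gamma=\alpha=2$. Your suspicion was well-founded: the corollary's hypothesis $N\ge3$ should read $N\ge4$ as stated. (Replacing $|x|^{-2}$ by $|x|^{-1}$ in the middle term, i.e.\ taking $\alpha=1$, would make the chain valid for $N\ge3$, as you note, but that is a different statement from the one printed.)
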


Hardy-Sobolev-Maz'ya inequality which is the inequality (\ref{HSM ineq}) with Sobolev type remainder term has been investigated, see e.g. \cite{Mbook} p.139, Corollary 3, \cite{BFL,TT}.

\begin{remark}
The inequality (\ref{IH gene}) holds for $\gamma \in (0, \frac{N-\alpha}{\beta -1}]$ and becomes a trivial form when $\gamma =0$, since its best constant $\( \frac{\beta -1}{p} \gamma \)^p$ is zero. 
However we can take a limit of (\ref{IH gene}) as $\gamma \to 0$ as follows,  since $1-r^x = x \log \frac{1}{r} + o(1)$ as $x \to 0$.
\begin{align*}
\( \frac{\beta -1}{p} \gamma \)^{p} \int_{B_R} \frac{|u|^{p}}{|x|^\alpha \( 1- \( \frac{|x|}{R} \)^\gamma \)^\beta } dx
&= \( \frac{\beta -1}{p}  \)^{p} \gamma^{p-\beta} \int_{B_R} \frac{|u|^{p}}{|x|^\alpha \( \log \frac{R}{|x|} \)^\beta } dx  + o(1), \\
\int_{B_R} \frac{\left| \nabla u \cdot \frac{x}{|x|} \right|^p}{|x|^{\alpha-p}\( 1-\( \frac{|x|}{R} \)^\gamma \)^{\beta-p}} dx
&= \gamma^{p-\beta} \int_{B_R} \frac{\left| \nabla u \cdot \frac{x}{|x|} \right|^p}{|x|^{\alpha -p} \( \log \frac{R}{|x|} \)^{\beta -p} } dx  + o(1)\,\, (\gamma \to 0).
\end{align*}
As a consequence, we can obtain the inequality
\begin{equation}\label{IH log}
\( \frac{\beta -1}{p} \)^{p} \int_{B_R} \frac{|u|^{p}}{|x|^\alpha \( \log \frac{R}{|x|} \)^\beta } dx \le \int_{B_R} \frac{\left| \nabla u \cdot \frac{x}{|x|} \right|^p}{|x|^{\alpha-p}\( \log \frac{R}{|x|} \)^{\beta-p}} dx.
\end{equation}
as a limiting form of the inequality (\ref{IH gene}) as $\gamma \to 0$, where $\beta >1, \alpha \le  N$, and $1 \le p < \infty$. Concerning  to the inequality (\ref{IH log}), see \cite{TF,MOW,RS}. 
\end{remark}

The special case of (\ref{IH gene}) where $\alpha = N-(\beta -1)\gamma$ and $\beta = p$ coincides with the inequality (1.20) in \cite{I} which is shown by harmonic transplantation from $B_1$ to $\re^N$. 
However, our approach is based on the divergence theorem unlike \cite{I}. Therefore, it is straighter and simpler. 
Besides, we give generalizations of two Hardy type inequalities (\ref{H_p}), (\ref{H_p geo}). These generalized inequalities follow naturally from the divergence theorem. 

\begin{theorem}\label{T IH another}
Let $1 \le p < \infty, \gamma > 0$ and $\alpha \le N$.

\noindent
(I) If $\beta \ge0$, then the inequality
\begin{equation}\label{IH gene another}
\( \frac{N-\alpha}{p} \)^{p} \int_{B_R} \frac{|u|^{p}}{|x|^\alpha \( 1- \( \frac{|x|}{R} \)^\gamma \)^\beta } dx \le \int_{B_R} \frac{\left| \nabla u \cdot \frac{x}{|x|} \right|^p}{|x|^{\alpha-p}\( 1-\( \frac{|x|}{R} \)^\gamma \)^{\beta}} dx
\end{equation}
holds for all $u \in C_c^1(B_R)$. Furthermore, the constant $(\frac{N-\alpha}{p} )^p$ in (\ref{IH gene another}) is optimal and is not attained for $u \not\equiv 0$ for which the right-hand side is finite.

\noindent
(II) Let $\beta >1$ and $\alpha <  N - (p-1) \gamma$. Then the inequality
\begin{equation}\label{IH gene another 2}
\( \frac{\beta -1}{p} \gamma \)^{p} \int_{B_R} \frac{|u|^{p}}{|x|^{\alpha -\gamma} \( 1- \( \frac{|x|}{R} \)^\gamma \)^{\beta} } dx \le \int_{B_R} \frac{\left| \nabla u \cdot \frac{x}{|x|} \right|^p}{|x|^{\alpha-\gamma + (\gamma -1) p}\( 1-\( \frac{|x|}{R} \)^\gamma \)^{\beta  -p}} dx
\end{equation}
holds for all $u \in C_c^1(B_R)$. Furthermore, the constant $(\frac{\beta-1}{p} \gamma)^p$ in (\ref{IH gene another 2}) is optimal and is not attained for $u \not\equiv 0$ for which the right-hand side is finite.
\end{theorem}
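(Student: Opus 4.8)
The plan is to derive both (\ref{IH gene another}) and (\ref{IH gene another 2}) from the divergence theorem, in the spirit of the proof of Theorem~\ref{T IH}. The common template: I would pick a radial field $V(x)=\frac{x}{|x|}\,g(|x|)$, so that $\operatorname{div} V(x)=g'(|x|)+\frac{N-1}{|x|}\,g(|x|)$; multiplying by $|u|^{p}$, integrating over $B_{R}\setminus B_{\ep}$, dropping the boundary term on $\pd B_{R}$ (using $u\in C_{c}^{1}(B_{R})$), and sending $\ep\to0$---the inner boundary term is $O(\ep^{\,N-\alpha})\to0$ when $\alpha<N$, and in case (I) with $\alpha=N$ the left constant is $0$---yields
\begin{equation*}
\int_{B_{R}}(\operatorname{div} V)\,|u|^{p}\,dx=-\,p\int_{B_{R}}|u|^{p-2}u\,g(|x|)\,\Big(\nabla u\cdot\tfrac{x}{|x|}\Big)\,dx\le p\int_{B_{R}}|g(|x|)|\,|u|^{p-1}\,\Big|\nabla u\cdot\tfrac{x}{|x|}\Big|\,dx .
\end{equation*}
Then, if $g$ is chosen so that $\operatorname{div} V\ge c\,w_{1}$ in $B_{R}$, with $w_{1}$ the weight multiplying $|u|^{p}$ on the left of the target inequality, Hölder with exponents $\tfrac{p}{p-1},p$---splitting $|g|$ so that the part attached to $|u|^{p-1}$, raised to the power $\tfrac{p}{p-1}$, equals $w_{1}$---gives $(c/p)^{p}\int w_{1}|u|^{p}\le\int w_{2}\,|\nabla u\cdot\tfrac{x}{|x|}|^{p}$, the weight $w_{2}$ being forced. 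So everything reduces to finding $g$.

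For part (I) I would take $V(x)=\dfrac{x}{|x|^{\alpha}(1-(|x|/R)^{\gamma})^{\beta}}$, i.e.\ $g(r)=r^{1-\alpha}(1-(r/R)^{\gamma})^{-\beta}$. A short computation gives
\begin{equation*}
\operatorname{div} V(x)=\frac{N-\alpha}{|x|^{\alpha}(1-(|x|/R)^{\gamma})^{\beta}}+\frac{\beta\gamma\,(|x|/R)^{\gamma}}{|x|^{\alpha}(1-(|x|/R)^{\gamma})^{\beta+1}} .
\end{equation*}
Since $\beta\ge0$ and $\gamma>0$, the second term is $\ge0$, so $\operatorname{div} V\ge(N-\alpha)\,w_{1}$ with $w_{1}=|x|^{-\alpha}(1-(|x|/R)^{\gamma})^{-\beta}$ (here $N-\alpha\ge0$ is used); the template with $c=N-\alpha$ then produces exactly $w_{2}=|x|^{-(\alpha-p)}(1-(|x|/R)^{\gamma})^{-\beta}$, which is (\ref{IH gene another}).

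For part (II) the left weight carries a lighter power of $|x|$ while the right weight must keep $(1-(|x|/R)^{\gamma})^{-(\beta-p)}$, so I would use the geometric-Hardy-type field modelled on $V(x)=\dfrac{x}{|x|}\,|x|^{1-\alpha}(1-(|x|/R)^{\gamma})^{1-\beta}$ (normalized by a power of $R$ so the constant below comes out as $(\beta-1)\gamma$). Its divergence should split as
\begin{equation*}
\operatorname{div} V(x)=\frac{N-\alpha}{|x|^{\alpha}(1-(|x|/R)^{\gamma})^{\beta-1}}+\frac{(\beta-1)\gamma}{|x|^{\alpha-\gamma}(1-(|x|/R)^{\gamma})^{\beta}} ,
\end{equation*}
whose first term is $\ge0$ because $\alpha<N$ and $0<1-(|x|/R)^{\gamma}<1$ in $B_{R}\setminus\{0\}$; discarding it, $\operatorname{div} V\ge(\beta-1)\gamma\,w_{1}$ with $w_{1}=|x|^{-(\alpha-\gamma)}(1-(|x|/R)^{\gamma})^{-\beta}$, and the Hölder split of the template should push the gradient side onto the power $|x|^{-(\alpha-\gamma+(\gamma-1)p)}$ and onto $(1-(|x|/R)^{\gamma})^{-(\beta-p)}$, i.e.\ the $w_{2}$ of (\ref{IH gene another 2}). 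I expect the delicate point here to be that, although the sign of the discarded term only needs $\alpha<N$, the full hypothesis $\alpha<N-(p-1)\gamma$ is what is genuinely required---to keep the weights integrable against $|\nabla u\cdot\tfrac{x}{|x|}|^{p}$ for admissible $u$ and, above all, to make the constant sharp rather than merely valid, since for larger $\alpha$ the behaviour near the origin (not near $\pd B_{R}$) would dictate a different, larger constant, exactly as the threshold $\alpha\le N-(\beta-1)\gamma$ acts in Theorem~\ref{T IH}.

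For sharpness and non-attainment I would argue as follows. In (\ref{IH gene another}) I would test with cut-offs of $|x|^{-(N-\alpha)/p+1/j}$ supported in a small ball, where $(1-(|x|/R)^{\gamma})^{\pm\beta}\to1$, so both integrals reduce to the classical Hardy integrals (\ref{H_p}) and the quotient tends to $(\tfrac{N-\alpha}{p})^{p}$. In (\ref{IH gene another 2}) I would instead test with functions concentrating on $\pd B_{R}$ built from the profile $(1-(|x|/R)^{\gamma})^{-(\beta-1)/p+1/j}$ (the Hölder-equality profile for this weight), cut off away from the origin, where the inequality is asymptotically the geometric Hardy inequality, of sharp constant $(\tfrac{\beta-1}{p}\gamma)^{p}$ for this weight. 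Non-attainment comes from strictness of the divergence estimate: equality there would force the discarded nonnegative term times $|u|^{p}$ to vanish a.e., which is impossible for $u\not\equiv0$ when $\beta>0$ in (\ref{IH gene another}) (and when $\beta=0$ the only candidate extremal $|x|^{-(N-\alpha)/p}$ makes the right-hand side infinite), and impossible for every admissible parameter choice in (\ref{IH gene another 2}) because $\alpha<N$. The hard part, overall, will be part (II): pinning down $g$ so that the Hölder split produces exactly the stated weights, correctly tracking the powers of $R$ through the scaling, and locating precisely where $\alpha<N-(p-1)\gamma$ is essential.
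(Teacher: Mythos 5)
Your proposal matches the paper's proof in all essentials: part (I) uses the vector field $V=\dfrac{x}{|x|^{\alpha}(1-(|x|/R)^{\gamma})^{\beta}}$, keeps the first piece of $\operatorname{div}V$ and drops the nonnegative second piece, then applies H\"older; part (II) shifts the boundary exponent by one (your $V=\frac{x}{|x|}|x|^{1-\alpha}(1-(|x|/R)^{\gamma})^{1-\beta}$), drops the other piece of $\operatorname{div}V$ which is nonnegative because $\alpha<N$, and the same H\"older split produces exactly the weight $|x|^{-(\alpha-\gamma+(\gamma-1)p)}(1-(|x|/R)^{\gamma})^{-(\beta-p)}$; the test functions and non-attainment reasoning also coincide with the paper's. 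The one place you are tentative --- where the hypothesis $\alpha<N-(p-1)\gamma$ in (II) is actually used --- the paper makes precise in a way your sketch does not: it first establishes \eqref{IH gene another 2} for $u\in C_{c}^{1}(B_{R}\setminus\{0\})$ and then extends to $u\in C_{c}^{1}(B_{R})$ by the cut-off $u_{\ep}=u(1-\varphi_{\ep})$ near the origin, and $\alpha<N-(p-1)\gamma$ is exactly the condition under which the error term $\ep^{-p}\int_{0}^{2\ep}r^{\,N-1-\alpha+\gamma-(\gamma-1)p}\,dr$ tends to $0$; it is not an integrability requirement on the stated weights nor a threshold at which the sharp constant would jump.
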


\begin{remark}\label{Rem another}
In Theorem \ref{T IH another} (I), the special case where $\beta = 0$ coincides with the classical Hardy type inequality (\ref{H_p}). On the other hand, in Theorem \ref{T IH another} (II), the special case where $R=\alpha = \gamma =1$ coincides with the geometric Hardy type inequality (\ref{H_p geo}).
\end{remark}

\begin{remark}
If we take limits of (\ref{IH gene another}) and (\ref{IH gene another 2}) as $\gamma \to 0$, then we obtain the following inequalities:
\begin{align*}
\( \frac{N-\alpha}{p} \)^{p} \int_{B_R} \frac{|u|^{p}}{|x|^\alpha \( \log \frac{R}{|x|} \)^\beta } dx &\le \int_{B_R} \frac{\left| \nabla u \cdot \frac{x}{|x|} \right|^p}{|x|^{\alpha-p}\( \log \frac{R}{|x|} \)^{\beta}} dx\\
\( \frac{\beta -1}{p} \)^{p} \int_{B_R} \frac{|u|^{p}}{|x|^\alpha \( \log \frac{R}{|x|} \)^\beta } dx &\le \int_{B_R} \frac{\left| \nabla u \cdot \frac{x}{|x|} \right|^p}{|x|^{\alpha-p}\( \log \frac{R}{|x|} \)^{\beta -p}} dx
\end{align*}
%as limiting forms of (\ref{IH gene another}) and  (\ref{IH log}) as a limiting form of (\ref{IH gene another 2}) as $\gamma \to 0$.
\end{remark}

\begin{remark}
It is well-known that each Hardy type inequality has each {\it virtual extremal}, which attains  the optimal constant, however, is not in the suitable functional space since the integral in the inequality diverges, see also \cite{CF,S(MIA)} for distance type remainder terms from the  virtual extremal. 
From the proof of Theorem \ref{T IH another}, the virtual extremal of the inequality (\ref{IH gene another}) is $|x|^{-\frac{N-\alpha}{p}}$, which is equal to $|x|^{-\frac{\beta -1}{p} \gamma}$ when $\gamma = \frac{N -\alpha}{\beta -1}$. On the other hand, the virtual extremal of the inequality (\ref{IH gene another 2}) is $(1-|x|^\gamma )^{\frac{\beta -1}{p}}$. 
We see that the mixed function $|x|^{-\frac{\beta -1}{p} \gamma} (1-|x|^\gamma )^{\frac{\beta -1}{p}} = (|x|^{-\gamma} -1 )^{\frac{\beta -1}{p}}$ becomes the virtual extremal of the inequality (\ref{IH gene}), see the proof of Theorem \ref{T IH} and Remark \ref{R virtual extremal}. 
\end{remark}

%%%%%%%%%%%%%%%%%%%%%%%%%%%%%%%%%%%%%%%%%%%%%
%Comments for Theorems

%A few comments are in order. 
Theorem \ref{T IH} with $p=\alpha = \beta = 2$ is closely related to the following minimization problem and the eigenvalue problem with a singular potential $\frac{1}{|x|^2 (1-|x|^\gamma )^{2}}$
\begin{align}\label{EL}
\( \frac{\gamma}{2} \)^2 = \inf_{u \in H_0^1 (B_1) \setminus \{ 0\}} \frac{\int_{B_1} |\nabla u |^2 \,dx}{\int_{B_1} \frac{|u|^2}{|x|^2 (1-|x|^\gamma )^2} \,dx}, \quad
\begin{cases}
-\lap u = \( \frac{\gamma}{2} \)^2 \frac{u}{|x|^2 (1-|x|^\gamma )^{2}} \quad &\text{in} \,\, B_1, \\
\quad  \,u = 0  \,\, &\text{on} \,\, \pd B_1.
\end{cases}
\end{align}
Theorem \ref{T IH} implies that the above infimum is not attained, and a weak solution of (\ref{EL}) does not exist. 
Furthermore, the above infimum becomes the threshold of whether a solution of the heat equation with the above singular potential exists, see \cite{CabreM}.

%%%%%%%%%%%%%%%%%%%%%%%%%%%%%%%%%%%%%%%%%%%%%%%%%%%

This paper is organized as follows:
We show Theorem \ref{T IH} in subsection \ref{Sec 2.1} and show Theorem \ref{T IH another} in subsection \ref{Sec 2.3}. Both Theorems are proved based on the divergence theorem. In subsection \ref{Sec trans}, we also give another proof of Theorem \ref{T IH} based on transformation approach. We give a generalization of the harmonic transplantation from $B_1$ to $\re^N$ used in \cite{I}. 
By using this transformation, we study minimization problems associated with the  improved Hardy-Sobolev type inequalities. 
In section \ref{Sec Higher}, we also give an improvement of the Rellich type inequalities as an application of Theorem \ref{T IH}. 
We consider the case where $p=2$ for any functions in subsection \ref{S IR p=2}, and   the case where $p \not= 2$ for radially symmetric functions in subsection \ref{S IR rad}.  
In section \ref{Appendix}, we give one dimensional inequalities and calculations to show Theorems.

We fix several notations: 
$X_{{\rm rad}} = \{ \, u \in X \, | \, u \,\,\text{is radially symmetric} \, \}$. 
$B_R$ %or $B_R^N$
denotes a $N$-dimensional ball centered $0$ with radius $R$. 
%As a matter of convenience, we set $B_\infty^N = \re^N$ and $\frac{1}{\infty} = 0$.  
$\omega_{N-1}$ denotes an area of the unit sphere $\mathbb{S}^{N-1}$ in $\re^N$. 
%$|A|$ denotes the Lebesgue measure of a set $A \subset \re^N$ and 

%The Schwarz symmetrization $u^{\#} \colon \re^N \to [0, \infty]$ of $u$ is given by 
%\begin{align*}
%u^{\#}(x) = u^{\#}(|x|)=\inf \left\{ \tau >0 \,: \, | \{ y \in \re^N \, :\, |u(y)| > \tau \} \,| \le |\,B_{|x|} \, | \right\}.
%\end{align*}
%Throughout the paper, if a radial function $u$ is written as $u(x) = \tilde{u}(|x|)$ by some function $\tilde{u} = \tilde{u}(r)$, we write $u(x)= u(|x|)$ with admitting some ambiguity.

%%%%%%%%%%%%%%%%%%%%%%%%%%%%%%%%%

% \S 2 Generalizations of the improved Hardy inequality

%%%%%%%%%%%%%%%%%%%%%%%%%%%%%%%%%

\section{First order inequalities}\label{S IH}

\subsection{The divergence theorem and improvements of two Hardy type inequalities: Proof of Theorem \ref{T IH}}\label{Sec 2.1}

We show the following two Theorems. 
Both Theorems are shown by the divergence theorem and imply Theorem \ref{T IH}. 

\begin{theorem}\label{T IH remainder}
Let $1 < p, \beta < \infty, \alpha \in \re, \gamma > 0$, and $\alpha \le N-(\beta -1)\gamma$. Then the inequality
\begin{equation}\label{IH remainder}
\( \frac{\beta -1}{p} \gamma \)^{p} \int_{B_R} \frac{|u|^{p}}{|x|^\alpha \( 1- \( \frac{|x|}{R} \)^\gamma \)^\beta } dx + \psi_{N,p,\alpha, \beta} (u) \le \int_{B_R} \frac{\left| \nabla u \cdot \frac{x}{|x|} \right|^p}{|x|^{\alpha-p}\( 1-\( \frac{|x|}{R} \)^\gamma \)^{\beta-p}} dx
\end{equation}
holds for all $u \in C_c^1(B_R)$, where $C >0$ depends on $p$ and $N$,
\begin{align*}
\psi_{N,p, \alpha, \beta} (u ) = (N-\alpha - (\beta -1) \gamma) \( \frac{\beta -1}{p} \gamma \)^{p-1}\int_{B_R} \frac{|u|^p}{|x|^{\alpha} \(1- \( \frac{|x|}{R} \)^\gamma \)^{\beta-1}} \,dx
\end{align*}
for $\alpha < N-(\beta -1)\gamma$, and 
\begin{align*}
\psi_{N,p,\alpha, \beta} (u)
=\begin{cases} 
C \int_{B_R} |x|^{p-N}\( 1- \( \frac{|x|}{R} \)^\gamma \)^{p-1} \left| \nabla \( \frac{u(x)}{\( \( \frac{|x|}{R} \)^{-\gamma} -1 \)^{\frac{\beta -1}{p}}} \) \cdot \frac{x}{|x|} \right|^p \,dx \,\,&\text{if}\,\, p \in [2, \infty), \\
C \( \int_{B_R} |x|^{p-N}\( 1- \( \frac{|x|}{R} \)^\gamma \)^{p-1} \left| \nabla \( \frac{u(x)}{\( \( \frac{|x|}{R} \)^{-\gamma} -1 \)^{\frac{\beta -1}{p}}} \) \cdot \frac{x}{|x|} \right|^p \,dx \)^{\frac{2}{p}} &\tc{white}{a}\\
\hspace{5em}\times \( \int_{B_1} \frac{\left| \nabla u \cdot \frac{x}{|x|} \right|^p}{|x|^{\alpha-p} \( 1-\( \frac{|x|}{R} \)^\gamma \)^{\beta-p}} dx \)^{\frac{p-2}{2}} &\text{if}\,\, p \in (1, 2)
\end{cases}
\end{align*}
for $\alpha = N-(\beta -1)\gamma$.
\end{theorem}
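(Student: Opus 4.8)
\medskip\noindent\textit{Proof strategy.}\quad
The plan is to combine a ground-state (virtual extremal) substitution with the elementary convexity inequality for $t\mapsto|t|^{p}$ and a single integration by parts. Since the two sides of (\ref{IH remainder}) and the functional $\psi_{N,p,\alpha,\beta}$ are built only from $u$, the radial derivative $\pd_r u:=\nabla u\cdot\frac{x}{|x|}$, and radial weights, every boundary term that arises lives on a sphere and the computation is essentially one dimensional --- equivalently, one may write all integrals in polar coordinates and argue on each ray, the one-dimensional identities being recorded in Section~\ref{Appendix}. The hypotheses $p>1$ and $\beta>1$ are used exactly because the convexity inequality then produces a genuine positive remainder and because the virtual extremal below is nonconstant with $\tfrac{\beta-1}{p}\gamma>0$.

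First I would introduce $\phi(x):=\bigl((|x|/R)^{-\gamma}-1\bigr)^{\frac{\beta-1}{p}}$, the virtual extremal of (\ref{IH gene}); it is positive and smooth on $B_R\setminus\{0\}$, strictly decreasing in $|x|$, and satisfies the algebraic identity $\dfrac{|\pd_r\phi(x)|}{\phi(x)}\,|x|\,\bigl(1-(|x|/R)^{\gamma}\bigr)=\dfrac{\beta-1}{p}\gamma$, which is what makes everything collapse. For $u\in C^1_c(B_R)$ I would write $u=\phi v$; then $v=u/\phi$ vanishes near $\pd B_R$, and since $\phi(x)\to\infty$ as $x\to0$ the function $v$ and the fluxes below decay fast enough at the origin that all sphere integrals over $\{|x|=\ep\}$ vanish as $\ep\to0$ (only $\alpha<N$ is needed here, which is forced by $\beta>1$, $\gamma>0$). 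Pointwise $\pd_r u=(\pd_r\phi)v+\phi\,\pd_r v$, so applying $|a+b|^{p}\ge|a|^{p}+p|a|^{p-2}ab+R_p(a,b)$ --- with $R_p(a,b)=c_p|b|^{p}$ for $p\ge2$ and $R_p(a,b)=c_p|b|^{2}(|a|+|b|)^{p-2}$ for $1<p<2$, $c_p>0$ --- to $a=(\pd_r\phi)v$, $b=\phi\,\pd_r v$, and integrating against $|x|^{p-\alpha}\bigl(1-(|x|/R)^{\gamma}\bigr)^{p-\beta}$ splits the right-hand side of (\ref{IH remainder}) as $\mathrm{I}+\mathrm{II}+\mathrm{III}$. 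By the displayed identity together with $|v|^{p}=|u|^{p}\phi^{-p}$, the leading term $\mathrm{I}=\int_{B_R}|\pd_r\phi|^{p}|v|^{p}|x|^{p-\alpha}(1-(|x|/R)^{\gamma})^{p-\beta}\,dx$ is exactly $\bigl(\tfrac{\beta-1}{p}\gamma\bigr)^{p}\int_{B_R}|u|^{p}|x|^{-\alpha}(1-(|x|/R)^{\gamma})^{-\beta}\,dx$.

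For the cross term I would use $|a|^{p-2}ab=\tfrac1p|\pd_r\phi|^{p-2}(\pd_r\phi)\phi\,\pd_r(|v|^{p})$ to write $\mathrm{II}=\int_{B_R}G\cdot\nabla(|v|^{p})\,dx$ with the radial field $G(x)=|\pd_r\phi|^{p-2}(\pd_r\phi)\phi\,|x|^{p-\alpha}(1-(|x|/R)^{\gamma})^{p-\beta}\,\frac{x}{|x|}$, integrate by parts (the boundary terms on $\pd B_R$ and on $\{|x|=\ep\}$, $\ep\to0$, vanishing as above) to get $\mathrm{II}=-\int_{B_R}(\operatorname{div}G)\,|v|^{p}\,dx$, and then verify
\begin{equation*}
-\operatorname{div}G(x)=\bigl(N-\alpha-(\beta-1)\gamma\bigr)\Bigl(\tfrac{\beta-1}{p}\gamma\Bigr)^{p-1}|x|^{-\alpha}\bigl(1-(|x|/R)^{\gamma}\bigr)^{-(\beta-1)}.
\end{equation*}
I expect this last identity to be the main obstacle: it is a direct but delicate differentiation that has to reproduce at once the constant $(N-\alpha-(\beta-1)\gamma)(\tfrac{\beta-1}{p}\gamma)^{p-1}$ and the precise weight, and it is exactly here that the logarithmic-derivative identity for $\phi$ is used. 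Granting it, $\mathrm{II}=\psi_{N,p,\alpha,\beta}(u)$ when $\alpha<N-(\beta-1)\gamma$ and $\mathrm{II}=0$ when $\alpha=N-(\beta-1)\gamma$.

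It then remains to handle $\mathrm{III}\ge0$. In the subcritical case $\alpha<N-(\beta-1)\gamma$ I would simply discard $\mathrm{III}$, which already yields (\ref{IH remainder}) with the stated $\psi$. In the critical case $\alpha=N-(\beta-1)\gamma$ I keep $\mathrm{III}$: since $\phi^{p}|x|^{p-\alpha}(1-(|x|/R)^{\gamma})^{p-\beta}=R^{N-\alpha}|x|^{p-N}(1-(|x|/R)^{\gamma})^{p-1}$ and $\pd_r v=\nabla(u/\phi)\cdot\frac{x}{|x|}$, for $p\ge2$ the term $\mathrm{III}$ is literally the displayed $\psi_{N,p,\alpha,\beta}(u)$, while for $1<p<2$ I would apply H\"older's inequality with exponents $\tfrac2p$ and $\tfrac2{2-p}$ to $\int|b|^{p}(\cdots)\,dx=\int\bigl(|b|^{2}(|a|+|b|)^{p-2}\bigr)^{p/2}\bigl((|a|+|b|)^{p}\bigr)^{(2-p)/2}(\cdots)\,dx$, then bound $(|a|+|b|)^{p}\lesssim|\pd_r u|^{p}+|a|^{p}$ and use $\mathrm{I}\le$ (right-hand side) to recover the two-factor form of $\psi$, the constant $C>0$ being explicit and coming from $c_p$ (and the power of $R$). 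The remaining loose ends --- the rigorous justification of the integration by parts near $0$ and the $1<p<2$ bookkeeping --- I would carry out in Section~\ref{Appendix} together with the one-dimensional computations.
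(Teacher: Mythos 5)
Your strategy is sound and, in the subcritical regime $\alpha < N-(\beta-1)\gamma$, takes a genuinely different route from the paper's. The paper uses two distinct mechanisms: for $\alpha < N-(\beta-1)\gamma$ it applies the divergence theorem directly to the vector field $x\,(|x|^{-\gamma}-1)^{1-\beta}$, separates off the favorable $(N-\alpha-(\beta-1)\gamma)$-term, runs H\"older, and then extracts the remainder $\psi$ from the scalar inequality $(A-B)^p \le A^p - p(A-B)^{p-1}B$ for $A\ge B\ge0$; only in the critical case $\alpha = N-(\beta-1)\gamma$ does it switch to the ground-state substitution $v = u\,\bigl((|x|/R)^{-\gamma}-1\bigr)^{-\frac{\beta-1}{p}}$ plus the Lindqvist inequality. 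You run the substitution uniformly, so in your scheme the remainder in the subcritical case falls out of the integrated cross term, and in the critical case from the convexity remainder exactly as in the paper; both routes yield the same $\psi$. The unification is attractive; the paper's two-pronged argument has the modest advantage of sidestepping the justification of the integration by parts near the origin in the subcritical case (which you correctly reduce to $\alpha < N$, a consequence of $\beta>1$, $\gamma>0$).

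One bookkeeping slip to fix in the cross-term computation: the displayed identity for $-\operatorname{div}G$ is off by a factor of $\phi(x)^p = \bigl((|x|/R)^{-\gamma}-1\bigr)^{\beta-1}$. With $G = |\partial_r\phi|^{p-2}(\partial_r\phi)\,\phi\,|x|^{p-\alpha}\bigl(1-(|x|/R)^{\gamma}\bigr)^{p-\beta}\,\frac{x}{|x|}$, the scalar factor simplifies, after using $|\partial_r\phi|=\frac{\beta-1}{p}\gamma\,\phi\,|x|^{-1}\bigl(1-(|x|/R)^\gamma\bigr)^{-1}$ and $\phi^p = R^{(\beta-1)\gamma}|x|^{-(\beta-1)\gamma}\bigl(1-(|x|/R)^\gamma\bigr)^{\beta-1}$, to the pure power $-\bigl(\tfrac{\beta-1}{p}\gamma\bigr)^{p-1}R^{(\beta-1)\gamma}|x|^{1-\alpha-(\beta-1)\gamma}$, so that
\begin{align*}
-\operatorname{div}G = (N-\alpha-(\beta-1)\gamma)\Bigl(\tfrac{\beta-1}{p}\gamma\Bigr)^{p-1}R^{(\beta-1)\gamma}\,|x|^{-\alpha-(\beta-1)\gamma},
\end{align*}
which is $\phi(x)^p$ times what you wrote. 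Since $\mathrm{II}=\int(-\operatorname{div}G)\,|v|^p\,dx$ with $|v|^p=|u|^p\phi^{-p}$, the $\phi^p$ factors cancel and $\mathrm{II}=\psi_{N,p,\alpha,\beta}(u)$ does hold in the subcritical case (and $\mathrm{II}=0$ in the critical case), so your conclusion stands; only the intermediate formula needs correcting.
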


%%%%%%%%%%%%%%%%%%%%%%%%%%%%%%%%

\begin{theorem}\label{T IH remainder 2}
Let $1 < p, \beta < \infty, \alpha \in \re, \gamma > 0$, and $\alpha \le N-(\beta -1)\gamma$. Then the inequality
\begin{align}\label{IH remainder 2}
&\( \frac{\beta -1}{p} \gamma \)^{p} \int_{B_R} \frac{|u|^{p}}{|x|^\alpha \( 1- \( \frac{|x|}{R} \)^\gamma \)^\beta } dx = \int_{B_R} \frac{\left| \nabla u \cdot \frac{x}{|x|} \right|^p}{|x|^{\alpha-p}\( 1-\( \frac{|x|}{R} \)^\gamma \)^{\beta-p}} dx \notag \\ 
&-p \( \frac{\beta -1}{p} \gamma \)^{p}  \int_{B_R} R_p \left[ u,\, \frac{p}{(\beta -1) \gamma} \( -\nabla u \cdot \frac{x}{|x|} \) \,|x| \, \( 1- \( \frac{|x|}{R} \)^\gamma \) \, \right] \frac{dx}{|x|^\alpha \( 1- \( \frac{|x|}{R} \)^\gamma \)^\beta }  \notag \\
&-(N-\alpha - (\beta -1) \gamma) \( \frac{\beta -1}{p} \gamma \)^{p-1} \int_{B_R} \frac{|u|^p}{|x|^{\alpha} \(1- \( \frac{|x|}{R} \)^\gamma \)^{\beta-1}} \,dx
\end{align}
holds for all $u \in C_c^1(B_R)$, where 
\begin{align*}
R_p (\xi, \eta ) &= \frac{1}{p} \,|\,\eta\,|^p + \frac{p-1}{p} \,|\,\xi\,|^p - \,|\,\xi\,|^{p-2} \xi \, \eta \\
&= (p-1) \int_0^1 |\,t \,\xi + (1-t) \,\eta\,|^{p-2} \,dt \, |\,\xi -\eta\,|^2 \ge 0.
\end{align*}
\end{theorem}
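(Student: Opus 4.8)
The plan is to extract the identity~(\ref{IH remainder 2}) from one application of the divergence theorem. Writing $r=|x|$, I would test against the radial vector field
$$\vec F(x)=|u(x)|^{p}\,|x|^{1-\alpha}\bigl(1-(|x|/R)^{\gamma}\bigr)^{1-\beta}\,\frac{x}{|x|}.$$
Since $p>1$ and $u\in C_c^1(B_R)$, the function $|u|^{p}$ is $C^{1}$ with $\nabla|u|^{p}=p\,|u|^{p-2}u\,\nabla u$, so $\vec F\in C^{1}(B_R\setminus\{0\})$. Apply the divergence theorem on the annulus $B_R\setminus\overline{B_\varepsilon}$: the flux through $\partial B_R$ vanishes because $u$ has compact support, while the flux through $\partial B_\varepsilon$ is $O(\varepsilon^{\,N-\alpha})$ and so tends to $0$ as $\varepsilon\to0^{+}$, because $\alpha\le N-(\beta-1)\gamma<N$ (this same inequality makes every weighted integral below absolutely convergent for $u\in C_c^1(B_R)$). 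Computing $\operatorname{div}\vec F$ and simplifying the radial part with the elementary identity $(r/R)^{\gamma}\bigl(1-(r/R)^{\gamma}\bigr)^{-\beta}=\bigl(1-(r/R)^{\gamma}\bigr)^{-\beta}-\bigl(1-(r/R)^{\gamma}\bigr)^{1-\beta}$ should yield the ``first order'' identity
\begin{align*}
&(\beta-1)\gamma\int_{B_R}\frac{|u|^{p}}{|x|^{\alpha}\bigl(1-(|x|/R)^{\gamma}\bigr)^{\beta}}\,dx\\
&\quad+\bigl(N-\alpha-(\beta-1)\gamma\bigr)\int_{B_R}\frac{|u|^{p}}{|x|^{\alpha}\bigl(1-(|x|/R)^{\gamma}\bigr)^{\beta-1}}\,dx\\
&\quad=-\int_{B_R}\frac{\nabla|u|^{p}\cdot\frac{x}{|x|}}{|x|^{\alpha-1}\bigl(1-(|x|/R)^{\gamma}\bigr)^{\beta-1}}\,dx .
\end{align*}

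Next I would set $\eta(x):=\tfrac{p}{(\beta-1)\gamma}\bigl(-\nabla u\cdot\tfrac{x}{|x|}\bigr)\,|x|\,\bigl(1-(|x|/R)^{\gamma}\bigr)$, which is exactly the second argument of $R_p$ in the statement. Using $\nabla|u|^{p}=p\,|u|^{p-2}u\,\nabla u$, the right-hand side of the first order identity equals $(\beta-1)\gamma\int_{B_R}|u|^{p-2}u\,\eta\,|x|^{-\alpha}\bigl(1-(|x|/R)^{\gamma}\bigr)^{-\beta}\,dx$; dividing by $(\beta-1)\gamma$ and inserting the algebraic identity $|u|^{p-2}u\,\eta=\tfrac1p|\eta|^{p}+\tfrac{p-1}{p}|u|^{p}-R_p(u,\eta)$ from the statement splits the integral into three pieces. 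The decisive bookkeeping uses $\bigl(\tfrac{\beta-1}{p}\gamma\bigr)^{p}|\eta|^{p}=\bigl|\nabla u\cdot\tfrac{x}{|x|}\bigr|^{p}|x|^{p}\bigl(1-(|x|/R)^{\gamma}\bigr)^{p}$ and $\tfrac{p}{(\beta-1)\gamma}\bigl(\tfrac{\beta-1}{p}\gamma\bigr)^{p}=\bigl(\tfrac{\beta-1}{p}\gamma\bigr)^{p-1}$: after multiplying the whole relation by $p\bigl(\tfrac{\beta-1}{p}\gamma\bigr)^{p}$, the $|\eta|^{p}$–integral collapses onto the first term on the right-hand side of~(\ref{IH remainder 2}), the $\tfrac{p-1}{p}|u|^{p}$–integral is absorbed into the first term of the first order identity so that $\int_{B_R}\frac{|u|^{p}}{|x|^{\alpha}(1-(|x|/R)^{\gamma})^{\beta}}\,dx$ ends up on the left with coefficient $\bigl(\tfrac{\beta-1}{p}\gamma\bigr)^{p}$, and the last $|u|^{p}$–integral acquires the coefficient $\bigl(N-\alpha-(\beta-1)\gamma\bigr)\bigl(\tfrac{\beta-1}{p}\gamma\bigr)^{p-1}$. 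Collecting the three terms reproduces~(\ref{IH remainder 2}). That the two displayed forms of $R_p$ agree and that $R_p\ge0$ is Taylor's formula with integral remainder for the convex function $s\mapsto|s|^{p}$ (equivalently, Young's inequality $|\xi|^{p-2}\xi\,\eta\le\tfrac1p|\eta|^{p}+\tfrac{p-1}{p}|\xi|^{p}$), which I would record in one line.

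The only genuinely delicate point I foresee is justifying the divergence theorem near the singularity $x=0$, which is handled by the cut-off at $B_\varepsilon$ together with the hypothesis $\alpha<N$ (which follows from $\alpha\le N-(\beta-1)\gamma$ and also guarantees that every weighted integral in the statement is finite for $u\in C_c^1(B_R)$). The rest is constant-chasing of the kind that makes the $|\eta|^{p}$–term reconstruct the gradient term: bookkeeping rather than analysis. No compactness, no sharpness argument and no PDE input are needed — the proof is, in essence, the choice of $\vec F$ followed by Young's inequality together with its remainder $R_p$.
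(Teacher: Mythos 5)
Your proposal is correct and follows essentially the same route as the paper: one integration by parts (you phrase it as the divergence theorem applied to a radial vector field, the paper writes it in polar coordinates against $\tfrac{d}{dr}\bigl[(r^{-\gamma}-1)^{-\beta+1}\bigr]$) followed by the pointwise identity $|\xi|^{p-2}\xi\,\eta=\tfrac1p|\eta|^p+\tfrac{p-1}{p}|\xi|^p-R_p(\xi,\eta)$. The only cosmetic difference is that the paper absorbs the weights $r^{(N-\alpha-1)/p}(1-r^\gamma)^{-\beta/p}$ into its $\xi$ and $\eta$ and then relies on the $p$-homogeneity of $R_p$, whereas you keep $\xi=u$ and carry the weight outside, which matches the displayed formula~\eqref{IH remainder 2} directly.
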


\begin{remark}\label{R virtual extremal}
Note that $R_p (\xi, \eta ) = 0$ if and only if $\xi = \eta$. Let $R=1$ for simplicity. 
If $R_p =0$ in (\ref{IH remainder 2}) for some function $u=u(x)= u(r\w)\,(r=|x|, \w \in \mathbb{S}^{N-1})$, then for any $r \in (0,1), \w \in \mathbb{S}^{N-1}$ we have
\begin{align*}
-\frac{\pd u}{\pd r} (r\w) = \frac{(\beta -1) \gamma}{p}  \frac{u(r\w)}{r \, \( 1- r^\gamma \)}.
\end{align*}
Here, for fixed $\w \in \mathbb{S}^{N-1}$, we set $g(r)= u(r\w)$. Then $g$ satisfies the following ODE:
\begin{align*}
-g'(r) = \frac{(\beta -1) \gamma}{p}  \frac{g(r)}{r \, \( 1- r^\gamma \)}, \quad g(1)=0
\end{align*}
We can solve it by separation of variables as follows:
\begin{align*}
g(r) = C_{\w}  (r^{-\gamma} -1)^{\frac{\beta -1}{p}} \quad (C_{\w} \in \re)
\end{align*}
This means that $u(x)= f \( \frac{x}{|x|} \) ( \,|x|^{-\gamma} -1)^{\frac{\beta -1}{p}}$ for some function $f: \mathbb{S}^{N-1} \to \re$. 
 
\end{remark}

%%%%%%%%%%%%%%%%%%%%%%%%%%%%%%%%

\begin{proof}(Proof of Theorem \ref{T IH remainder}).
For the simplicity, we set $R=1$. Note that 
$$
{\rm div} \( \frac{x}{\( |x|^{-\gamma} -1 \)^{\beta-1}} \)
= \frac{N \,|x|^{(\beta -1)\gamma}}{\( 1-|x|^\gamma \)^{\beta -1}}
+ \frac{(\beta -1)\gamma \,|x|^{(\beta -1)\gamma}}{\( 1-|x|^\gamma \)^{\beta }}.
$$
Then we have 
\begin{align*}
&(\beta -1)\gamma \int_{B_1} \frac{|u|^p}{|x|^\alpha \( 1-|x|^\gamma \)^\beta} \,dx \\
&= \int_{B_1} {\rm div} \( \frac{x}{\( |x|^{-\gamma} -1 \)^{\beta-1}} \) \frac{|u|^p}{|x|^{\alpha + (\beta -1)\gamma}} - N \frac{|u|^p}{|x|^\alpha \( 1-|x|^\gamma \)^{\beta -1}} \,dx\\
&= -p \int_{B_1} \frac{|u|^{p-2}u \,(\nabla u \cdot x)}{|x|^{\alpha -1} \( 1-|x|^{\gamma} \)^{\beta-1}} \,dx  - (N-\alpha -(\beta -1)\gamma ) \frac{|u|^p}{|x|^\alpha \( 1-|x|^\gamma \)^{\beta -1}} \,dx\\
&\le p \( \int_{B_1} \frac{\left| \nabla u \cdot \frac{x}{|x|} \right|^p}{|x|^{\alpha -p} \( 1-|x|^\gamma \)^{\beta -p}} \,dx \)^{\frac{1}{p}} \( \int_{B_1} \frac{|u|^p}{|x|^\alpha \( 1-|x|^\gamma \)^\beta} \,dx  \)^{1-\frac{1}{p}} \\
&- (N-\alpha -(\beta -1)\gamma ) \int_{B_1} \frac{|u|^p}{|x|^\alpha \( 1-|x|^\gamma \)^{\beta -1}} \,dx
\end{align*}
which implies that for any $u \neq 0$
\begin{align}\label{p mae}
\frac{\beta-1}{p} \gamma \( \int_{B_1} \frac{|u|^p}{|x|^\alpha \( 1-|x|^\gamma \)^\beta} \,dx  \)^{\frac{1}{p}}
&\le  \( \int_{B_1} \frac{\left| \nabla u \cdot \frac{x}{|x|} \right|^p}{|x|^{\alpha -p} \( 1-|x|^\gamma \)^{\beta -p}} \,dx \)^{\frac{1}{p}}\notag \\
&-\frac{(N-\alpha -(\beta -1)\gamma )}{p} \frac{\int_{B_1} \frac{|u|^p}{|x|^\alpha \( 1-|x|^\gamma \)^{\beta -1}} \,dx}{\( \int_{B_1} \frac{|u|^p}{|x|^\alpha \( 1-|x|^\gamma \)^\beta} \,dx  \)^{\frac{p-1}{p}}}.
\end{align}
Therefore we obtain the inequality (\ref{IH gene}). 
Set 
\begin{align*}
A= \( \int_{B_1} \frac{\left| \nabla u \cdot \frac{x}{|x|} \right|^p}{|x|^{\alpha -p} \( 1-|x|^\gamma \)^{\beta -p}} \,dx \)^{\frac{1}{p}}, \quad
B= \frac{(N-\alpha -(\beta -1)\gamma )}{p} \frac{\int_{B_1} \frac{|u|^p}{|x|^\alpha \( 1-|x|^\gamma \)^{\beta -1}} \,dx}{\( \int_{B_1} \frac{|u|^p}{|x|^\alpha \( 1-|x|^\gamma \)^\beta} \,dx  \)^{\frac{p-1}{p}}}.
\end{align*}
By the fundamental inequality $(A-B)^p \le A^p -p (A-B)^{p-1} B\,(A \ge B)$ and the inequality
$A-B \ge \frac{\beta -1}{p} \gamma \( \int_{B_1} \frac{|u|^p}{|x|^\alpha \( 1-|x|^\gamma \)^\beta} \,dx  \)^{\frac{1}{p}}$ from (\ref{p mae}), we have
\begin{align*}
\(\frac{\beta-1}{p} \gamma \)^p \int_{B_1} \frac{|u|^p}{|x|^\alpha \( 1-|x|^\gamma \)^\beta} \,dx
&\le A^p -p(A-B)^{p-1} B\\
&\le \int_{B_1} \frac{\left| \nabla u \cdot \frac{x}{|x|} \right|^p}{|x|^{\alpha -p} \( 1-|x|^\gamma \)^{\beta -p}} \,dx - \psi_{N,p, \alpha, \beta} (u)
\end{align*}
which implies (\ref{IH remainder}) for $\alpha < N- (\beta -1)\gamma$. Assume that $\alpha = N- (\beta -1)\gamma$. For $u \in C^1_c (B_1)$, set
\begin{align*}
v(x) &= u(x) \( |x|^{-\gamma} -1 \)^{-\frac{\beta -1}{p} },\\
J(u) &= \int_{B_1} \frac{\left| \nabla u \cdot \frac{x}{|x|} \right|^p}{|x|^{\alpha-p}\( 1-|x|^\gamma \)^{\beta-p}} dx - \( \frac{\beta -1}{p} \gamma \)^{p} \int_{B_1} \frac{|u|^{p}}{|x|^\alpha \( 1- |x|^\gamma \)^\beta } dx.
\end{align*}
Then we have
\begin{align*}
\nabla u \cdot \frac{x}{|x|} = B-A,\,\text{where}\,\, A= \frac{\beta -1}{p}\gamma v \,|x|^{-\gamma -1} (|x|^{-\gamma} -1)^{\frac{\beta -1}{p} -1}, \,B= (|x|^{-\gamma} -1)^{\frac{\beta -1}{p}} \( \nabla v \cdot \frac{x}{|x|} \).
\end{align*}
By the inequality
\begin{align*}
|a-b|^p - |a|^p + p|a|^{p-2} ab \ge 
\begin{cases}
C_1 |b|^p \quad &\text{if} \,\, p \in [2,\infty),\\
C_2 \frac{|b|^2}{(|a-b| + |a|)^{2-p}} &\text{if} \,\, p \in (1,2)
\end{cases}
\end{align*}
for some $C_1, C_2>0$ and for any $a, b \in \re$ (See e.g. \cite{Lind}), we have
\begin{align*}
J(u) &\ge \int_{B_1} \frac{|A|^p -p |A|^{p-2}AB + C_1|B|^p}{|x|^{\alpha-p}\( 1-|x|^\gamma \)^{\beta-p}} dx - \( \frac{\beta -1}{p} \gamma \)^{p} \int_{B_1} \frac{|v|^{p}}{|x|^N \( 1- |x|^\gamma \) } dx\\
&= \( \frac{\beta -1}{p} \gamma \)^{p-1} \int_{B_1} \frac{\nabla \( |v|^p \) \cdot \frac{x}{|x|}}{|x|^{N-1}} \,dx + C_1 \int_{B_1} |x|^{p-N}\( 1- |x|^\gamma \)^{p-1} \left| \nabla v \cdot \frac{x}{|x|} \right|^p \,dx
\end{align*}
for $p \in [2, \infty)$. Since 
\begin{align*}
\int_{B_1} \frac{\nabla \( |v|^p \) \cdot \frac{x}{|x|}}{|x|^{N-1}} \,dx = \int_{\mathbb{S}^{N-1}} \int_0^1 \frac{\pd}{\pd r} (|v|^p) \,dr \,dS_\w =0,
\end{align*}
we have (\ref{IH remainder}) for $p \in [2, \infty)$ and $\alpha = N- (\beta -1)\gamma$. On the other hand, we have
\begin{align}\label{J est}
J(u) &\ge C_2 \int_{B_1} \frac{|B|^2}{|x|^{\alpha-p}\( 1-|x|^\gamma \)^{\beta-p} (|A-B| + |A|)^{2-p}} \,dx \notag\\
&=C_2 \int_{B_1} \frac{|x|^{p-\alpha -\frac{2}{p}(N-\alpha)} (1-|x|^\gamma)^{-\beta (1-\frac{2}{p}) +p -\frac{2}{p}} \left| \nabla v \cdot \frac{x}{|x|} \right|^2}
{\( \left| \nabla u \cdot \frac{x}{|x|} \right|
+
\frac{\beta-1}{p} \gamma |v| |x|^{-1-\frac{N-\alpha}{p}} \( 1-|x|^\gamma \)^{\frac{\beta-1}{p} -1}  \)^{2-p}} \,dx
\end{align}
for $p \in (1, 2)$. In the same way as the proof of Theorem 1 in \cite{II}, the H\"older inequality and (\ref{J est}) imply
\begin{align*}
&\int_{B_1} |x|^{p-N}\( 1- |x|^\gamma \)^{p-1} \left| \nabla v \cdot \frac{x}{|x|} \right|^p \,dx \\
&\le \( \int_{B_1} \frac{|x|^{p-\alpha -\frac{2}{p}(N-\alpha)} (1-|x|^\gamma)^{-\beta (1-\frac{2}{p}) +p -\frac{2}{p}} \left| \nabla v \cdot \frac{x}{|x|} \right|^2}
{\( \left| \nabla u \cdot \frac{x}{|x|} \right|
+
\frac{\beta-1}{p} \gamma |v| |x|^{-1-\frac{N-\alpha}{p}} \( 1-|x|^\gamma \)^{\frac{\beta-1}{p} -1}  \)^{2-p}} \,dx \)^{\frac{p}{2}}\\
&\times \( \int_{B_1} \left| \, \left| \nabla u \cdot \frac{x}{|x|} \right|
+
\frac{\beta-1}{p} \gamma \frac{|v|}{|x|^{1+\frac{N-\alpha}{p}}} \( 1-|x|^\gamma \)^{\frac{\beta-1}{p} -1} \right|^p |x|^{p-\alpha} (1-|x|^\gamma)^{p-\beta}\,dx \)^{\frac{2-p}{2}} \\
&\le \( \frac{J(u)}{C_2} \)^{\frac{p}{2}} 2^{\frac{(p-1)(2-p)}{2}} \( \int_{B_1} \frac{\left| \nabla u \cdot \frac{x}{|x|} \right|^p}{|x|^{\alpha-p}\( 1-|x|^\gamma \)^{\beta-p}} dx \)^{\frac{2-p}{2}}.
\end{align*}
Therefore we have (\ref{IH remainder}) for $p \in (1, 2)$ and $\alpha = N- (\beta -1)\gamma$. \qed
\end{proof}

%%%%%%%%%%%%%%%%%%%%%%%%%%%%%%%%%%

\begin{proof}(Proof of Theorem \ref{T IH remainder 2}).
We refer \cite{IIO(2016),IIO(2017)}. 
For the simplicity, we set $R=1$. Note that 
\begin{align*}
\frac{d}{dr} \left[ \( r^{-\gamma} -1 \)^{-\beta+1} \right]
= (\beta -1)\gamma \,r^{-1 + (\beta -1)\gamma} \( 1- r^{\gamma} \)^{-\beta}
\end{align*}
Then we have 
\begin{align*}
&\int_{B_1} \frac{|u|^p}{|x|^\alpha \( 1-|x|^\gamma \)^\beta} \,dx \\
&= \int_0^1 r^{N-\alpha -1} (1-r^\gamma )^{-\beta} \int_{\mathbb{S}^{N-1}} |u(r\w )|^p \, dr dS_{\w} \\
&= \frac{1}{(\beta -1) \gamma} \int_0^1 \int_{\mathbb{S}^{N-1}} |u|^p r^{N-\alpha -(\beta -1)\gamma} \,\frac{d}{dr} \left[ \( r^{-\gamma} -1 \)^{-\beta+1} \right] \, dr dS_{\w} \\
&=\frac{p}{(\beta -1)\gamma} \iint |u|^{p-2}u \( - \frac{\pd u}{\pd r} \) r^{N-\alpha} (1-r^\gamma )^{-\beta +1} \, dr dS_{\w}\\
&- \frac{N-\alpha -(\beta -1) \gamma}{(\beta -1)\gamma} \iint |u|^{p} r^{N-\alpha-1} (1-r^\gamma )^{-\beta +1} \, dr dS_{\w}\\
&=\iint |\,\xi \,|^{p-2} \xi \eta \,drdS_{\w} - \frac{N-\alpha -(\beta -1) \gamma}{(\beta -1)\gamma}  \int_{B_1} \frac{|u|^p}{|x|^{\alpha} \(1- |x|^\gamma \)^{\beta-1}} \,dx,
\end{align*}
where 
\begin{align*}
\eta = \frac{p}{(\beta -1)\gamma} \( -\frac{\pd u}{\pd r} (r\w  ) \) r^{\frac{N-\alpha +p -1}{p}} (1- r^\gamma )^{\frac{-\beta +p}{p}}, \quad \xi = u(r\w) \,r^{\frac{N-\alpha -1}{p}} (1- r^\gamma )^{\frac{-\beta}{p}}.
\end{align*}
Since $|\,\xi \,|^{p-2} \xi \eta = \frac{1}{p} \,|\,\eta\,|^p + \frac{p-1}{p} \,|\,\xi\,|^p - R_p(\xi, \,\eta)$, we have
\begin{align*}
\int_{B_1} \frac{|u|^p}{|x|^\alpha \( 1-|x|^\gamma \)^\beta} \,dx 
&=\frac{1}{p} \( \frac{p}{(\beta -1)\gamma} \)^p \iint \left| \frac{\pd u}{\pd r} \right|^p r^{N-\alpha +p -1} (1-r^\gamma )^{-\beta +p} \,drdS_{\w} \\
&+\frac{p-1}{p} \iint |u|^p r^{N-\alpha -1} (1-r^\gamma )^{-\beta} \,drdS_{\w} -\iint R_p (\xi, \,\eta) \,drdS_\w \\
&- \frac{N-\alpha -(\beta -1) \gamma}{(\beta -1)\gamma}  \int_{B_1} \frac{|u|^p}{|x|^{\alpha} \(1- |x|^\gamma \)^{\beta-1}} \,dx
\end{align*}
which implies (\ref{IH remainder 2}). 
\qed
\end{proof}

%%%%%%%%%%%%%%%%%%%%%%%%%%%%%%%%%%

Here, we show Theorem \ref{T IH} by using Theorem \ref{T IH remainder}.

%%%%%%%%%%%%%%%%%%%%%%%%%%%%%%%%%%

\begin{proof}(Proof of Theorem \ref{T IH}).
Let $\alpha \le N - (\beta -1)\gamma$. For the simplicity, we set $R=1$. 
We show the optimality of the constant $\( \frac{\beta-1}{p} \gamma \)^p$ in (\ref{IH gene}). For $A > \frac{\beta-1}{p}$ and small $\delta >0$, set
\begin{align*}
	f_A (x)= \phi_\delta (x) \,(1-|x|^\gamma)^A,
\end{align*}
where $\phi_\delta$ is a smooth radially symmetric function which satisfies $0 \le \phi_\delta \le 1, \phi_\delta \equiv 0$ on $B_{1-2\delta}$ and $\phi_\delta \equiv 1$ on $B_1 \setminus B_{1-\delta}$.
Then we have
\begin{align*}
\( \frac{\beta-1}{p} \gamma \)^p 
&\le \frac{\int_{B_1} \frac{\left| \nabla f_A \cdot \frac{x}{|x|} \right|^p}{|x|^{\alpha -p} \( 1-|x|^\gamma \)^{\beta -p}} \,dx}{\int_{B_1} \frac{|f_A|^p}{|x|^\alpha \( 1-|x|^\gamma \)^\beta} \,dx} \\
&\le \frac{(A\gamma)^p \int_{1-\delta}^1 \( 1-r^\gamma \)^{Ap-\beta} r^{N-1-\alpha + \gamma p}\,dr + \int_{2-\delta}^{1-\delta} |(f_A)'|^p \( 1-r^\gamma \)^{p-\beta} r^{N-1-\alpha+p}\,dr}{\int_{1-\delta}^{1} \( 1-r^\gamma \)^{Ap-\beta} r^{N-1-\alpha}\,dr} \\
&= \( \frac{\beta-1}{p}\gamma \)^p + o(1) \quad \( A \to \frac{\beta-1}{p} \).
\end{align*}
Therefore the constant $\( \frac{\beta-1}{p} \gamma \)^p$ in (\ref{IH gene}) is optimal. Since there exists the nonnegative remainder term in Theorem \ref{T IH remainder} when $\alpha \le N - (\beta -1)\gamma$, we observe that if there exists an extremal function $U=U(x)$ of the inequality (\ref{IH gene}), then $U(x)= c \( |x|^{-\gamma} -1 \)^{\frac{\beta -1}{p}} = c |x|^{-\frac{\beta -1}{p}\gamma} \(  1-|x|^\gamma \)^{\frac{\beta -1}{p}}$ for some $c \in \re$. However, if $c \not= 0$, then the right-hand side of (\ref{IH gene}) diverges since
\begin{align*}
\int_{B_1 \setminus B_{1-\ep}} \frac{\left| \nabla U \cdot \frac{x}{|x|} \right|^p}{|x|^{\alpha -p} \( 1-|x|^\gamma \)^{\beta -p}} \,dx
&\ge C(\ep) \int_{B_1 \setminus B_{1-\ep}} \frac{\left| \nabla \(  1-|x|^\gamma \)^{\frac{\beta -1}{p}} \right|^p}{ \( 1-|x|^\gamma \)^{\beta -p}} \,dx + D(\ep)\\
&\ge \tilde{C}(\ep) \int_{B_1 \setminus B_{1-\ep}} \( 1-|x|^\gamma \)^{-1} \,dx + D(\ep) =\infty
\end{align*}
for any small $\ep >0$, where $C(\ep), \tilde{C}(\ep), D(\ep)$ are some constants depending on $\ep$.
The proof of Theorem \ref{T IH} is now complete.
\qed
\end{proof}

\subsection{Transformation approach and an improved Hardy-Sobolev type inequality}\label{Sec trans}

In this subsection, we show the generalized inequalities (\ref{IH gene}),  (\ref{IH log}) via the following transformation which is a generalization of harmonic transplantation proposed by Hersch \cite{Hersch}, see also \cite{F book,BBF}. Concerning a summary of harmonic transplantation, see \S 3 in \cite{ST(HT)}. Consider
\begin{align}\label{trans}
u(x) = v(y)= w (z),\,\, \text{where}\,\, \( |x|^{-\gamma} -R^{-\gamma} \) \frac{x}{|x|} = |y|^{-\gamma}\frac{y}{|y|} = \( \log \frac{R}{|z|} \) \frac{z}{|z|}
\end{align}
and set $\gamma = \frac{N-\alpha}{\beta -1}$. Then we see that
\begin{align*}
&\int_{B_R} \frac{\left| \nabla u \cdot \frac{x}{|x|} \right|^p}{|x|^{\alpha-p}\( 1-\(\frac{|x|}{R} \)^\gamma \)^{\beta-p}} dx
=\int_{\re^N} \frac{\left| \nabla v \cdot \frac{y}{|y|} \right|^p}{|y|^{\alpha-p}} dy
=\gamma^{p-1} \int_{B_R} \frac{\left| \nabla w \cdot \frac{z}{|z|} \right|^p}{|z|^{N-p}\( \log \frac{R}{|z|}  \)^{\beta-p}} dz, \\
&\int_{B_R} \frac{| u|^p}{|x|^{\alpha}\( 1-\(\frac{|x|}{R} \)^\gamma \)^{\beta}} dx
=\int_{\re^N} \frac{| v|^p}{|y|^{\alpha}} dy
=\gamma^{-1} \int_{B_R} \frac{| w|^p}{|z|^{N} \( \log \frac{R}{|z|}  \)^{\beta}} dz.
\end{align*}
Therefore the inequality (\ref{H_p}) on $\re^N$ for $v$ is equivalent to the inequality (\ref{IH gene}) with $\alpha = N -(\beta -1)\gamma$ for $u$ and the inequality (\ref{IH log}) with $\alpha = N$ for $w$. 
Moreover, since the inequality (\ref{H_p}) on $\re^N$ for $v$ is invariant under the usual scaling $v_\la (y)= \la^{\frac{N-\alpha}{p}} v(\tilde{y})$, where $\tilde{y}= \la y$ and $\la >0$, we obtain scale invariance structures of (\ref{IH gene}) and (\ref{IH log}) thanks to the transformations (\ref{trans}) as follows. 

\begin{proposition}\label{P scale}
The inequality (\ref{IH gene}) with $\alpha = N-(\beta -1)\gamma$ for $u$ is invariant under the scaling $u_\la (x)= \la^{\frac{N-\alpha}{p}} u(\tilde{x})$, where $\tilde{x}= \la x \left[ 1- (1-\la^\gamma) \( \frac{|x|}{R} \)^\gamma \right]^{-\frac{1}{\gamma}}$ and $\la >0$. 
On the other hand, the inequality (\ref{IH log}) with $\alpha = N$ for $w$ is invariant under the scaling $w_\mu (z)= \mu^{-\frac{\beta -1}{p}} w(\tilde{z})$, where $\tilde{z}= \( \frac{|z|}{R} \)^{\mu -1} z$ and $\mu = \la^{-\gamma} >0$. 
\end{proposition}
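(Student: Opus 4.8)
The plan is to transport the scale invariance of the classical Hardy inequality (\ref{H_p}) on $\re^N$ through the transformations (\ref{trans}). Throughout fix $\alpha=N-(\beta-1)\gamma$, as in this subsection, and write $\mathcal Q(v)$, $\mathcal Q_u(u)$, $\mathcal Q_w(w)$ for the three Rayleigh quotients (numerator integral over denominator integral) appearing in the two chains of identities displayed just before the statement; thus $\mathcal Q(v)$ is the quotient for ``(\ref{H_p}) on $\re^N$ for $v$'', $\mathcal Q_u(u)$ the one for (\ref{IH gene}) with $\alpha=N-(\beta-1)\gamma$, and $\mathcal Q_w(w)$ the one for (\ref{IH log}) with $\alpha=N$. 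First I would record the standard fact that $\mathcal Q$ is invariant under $v\mapsto v_\la$ with $v_\la(y)=\la^{\frac{N-\alpha}{p}}v(\la y)$: the substitution $\tilde y=\la y$, together with $\nabla v_\la(y)\cdot\frac{y}{|y|}=\la^{\frac{N-\alpha}{p}+1}(\nabla v)(\la y)\cdot\frac{\la y}{|\la y|}$, makes numerator and denominator separately homogeneous of degree $0$ in $\la$ (and note $\frac{N-\alpha}{p}=\frac{(\beta-1)\gamma}{p}$ here). Next, those same two chains of identities say precisely that, under (\ref{trans}), $\mathcal Q(v)=\mathcal Q_u(u)=\gamma^{p}\mathcal Q_w(w)$, the constants $\gamma^{p-1}$ and $\gamma^{-1}$ there combining into the single function-independent factor $\gamma^{p}$. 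Hence it remains only to identify, on the $u$- and $w$-sides, the map induced by $v\mapsto v_\la$ and to recognise it as the one in the statement.

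For $u$: the correspondence $x\leftrightarrow y$ in (\ref{trans}) is radial, namely $|y|^{-\gamma}=|x|^{-\gamma}-R^{-\gamma}$ with $\frac{y}{|y|}=\frac{x}{|x|}$. Inserting $\tilde y=\la y$ (so $|\tilde y|=\la|y|$) and solving for the point $\tilde x$ corresponding to $\tilde y$ gives $|\tilde x|^{-\gamma}=\la^{-\gamma}|x|^{-\gamma}+(1-\la^{-\gamma})R^{-\gamma}$, which rearranges to $\tilde x=\la x\bigl[\,1-(1-\la^\gamma)(|x|/R)^\gamma\,\bigr]^{-1/\gamma}$ with the angular part unchanged. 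Thus $u_\la(x):=v_\la(y)=\la^{\frac{N-\alpha}{p}}v(\tilde y)=\la^{\frac{N-\alpha}{p}}u(\tilde x)$, whence $\mathcal Q_u(u_\la)=\mathcal Q(v_\la)=\mathcal Q(v)=\mathcal Q_u(u)$. Since for each fixed $\la>0$ the map $x\mapsto\tilde x$ is a $C^1$-diffeomorphism of $B_R$ onto itself — it is the $x$-picture of the dilation $y\mapsto\la y$ of $\re^N$, and near the origin it is close to $x\mapsto\la x$ — one has $u_\la\in C_c^1(B_R)$ whenever $u\in C_c^1(B_R)$; this proves the first assertion.

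For $w$: the correspondence $y\leftrightarrow z$ in (\ref{trans}) is radial, namely $|y|^{-\gamma}=\log\frac{R}{|z|}$ with $\frac{z}{|z|}=\frac{y}{|y|}$. Inserting $\tilde y=\la y$ gives $\log\frac{R}{|\tilde z|}=\la^{-\gamma}\log\frac{R}{|z|}$, that is $|\tilde z|/R=(|z|/R)^{\la^{-\gamma}}$; writing $\mu=\la^{-\gamma}>0$ this is exactly $\tilde z=(|z|/R)^{\mu-1}z$. The inherited prefactor is $\la^{\frac{N-\alpha}{p}}=\la^{\frac{(\beta-1)\gamma}{p}}=\mu^{-\frac{\beta-1}{p}}$, so $w_\mu(z):=v_\la(y)=\mu^{-\frac{\beta-1}{p}}w(\tilde z)$, and $\gamma^{p}\mathcal Q_w(w_\mu)=\mathcal Q(v_\la)=\mathcal Q(v)=\gamma^{p}\mathcal Q_w(w)$, i.e.\ $\mathcal Q_w(w_\mu)=\mathcal Q_w(w)$, which is the claimed invariance of (\ref{IH log}). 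Here $z\mapsto\tilde z$ is a $C^1$-diffeomorphism of $B_R\setminus\{0\}$ onto itself, which already suffices because functions in $C_c^1(B_R)$ vanish near the origin.

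I expect the main work — though essentially routine — to be the bookkeeping in the last two paragraphs: composing the radial substitutions of (\ref{trans}) with the dilation $y\mapsto\la y$, inverting the resulting relations for $\tilde x$ and $\tilde z$, and keeping track of the inherited exponent. In particular, although (\ref{IH log}) is written with the parameter $\alpha=N$ (for which $\frac{N-\alpha}{p}=0$), the power actually acquired by $w$ is the nontrivial $\mu^{-\frac{\beta-1}{p}}$, inherited via $\frac{N-\alpha}{p}=\frac{(\beta-1)\gamma}{p}$ on the $v$-side.
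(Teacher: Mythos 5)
Your proposal is correct and takes essentially the same route the paper intends: it transports the standard scale invariance of the Hardy inequality (\ref{H_p}) on $\re^N$ through the transformations (\ref{trans}) and identifies the induced maps on the $u$- and $w$-sides. The paper only sketches this (``we obtain scale invariance structures \ldots thanks to the transformations (\ref{trans})''); your bookkeeping—solving the radial relations for $\tilde x$ and $\tilde z$, and converting the prefactor $\la^{\frac{N-\alpha}{p}}=\la^{\frac{(\beta-1)\gamma}{p}}$ into $\mu^{-\frac{\beta-1}{p}}$—is exactly the omitted computation and is correct.
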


\begin{remark}\label{R scale}
If $\alpha < N-(\beta -1)\gamma$, then we see that the inequality (\ref{IH gene}) is not invariant under the scaling $u_\la (r\w)= \la^A u(s \w)$, where $s=s(r), |x|=r, |\tilde{x}|=s, \frac{x}{|x|} = \frac{\tilde{x}}{|\tilde{x}|} =\w$ and $s'(r) >0, s(0)=0, s(1)=1$. 
In fact, assume that the inequality (\ref{IH gene}) is invariant as follows.
\begin{align}\label{scale 1}
\int_{B_R} \frac{\left| \nabla u_\la \cdot \frac{x}{|x|} \right|^p}{|x|^{\alpha-p}\( 1-\(\frac{|x|}{R} \)^\gamma \)^{\beta-p}} dx 
&= \int_{B_R} \frac{\left| \nabla u \cdot \frac{\tilde{x}}{|\tilde{x}|} \right|^p}{|\tilde{x}|^{\alpha-p}\( 1-\(\frac{|\tilde{x}|}{R} \)^\gamma \)^{\beta-p}} d\tilde{x}, \\
\label{scale 2}
\int_{B_R} \frac{| u_\la|^p}{|x|^{\alpha}\( 1-\(\frac{|x|}{R} \)^\gamma \)^{\beta}} dx
&= \int_{B_R} \frac{| u|^p}{|\tilde{x}|^{\alpha}\( 1-\(\frac{|\tilde{x}|}{R} \)^\gamma \)^{\beta}} d\tilde{x}.
\end{align}
For the simplicity, we set $R=1$. Since
\begin{align*}
\int_{B_1} \frac{\left| \nabla u \cdot \frac{\tilde{x}}{|\tilde{x}|} \right|^p}{|\tilde{x}|^{\alpha-p}\( 1-|\tilde{x}|^\gamma \)^{\beta-p}} d\tilde{x}
&= \int_{\mathbb{S}^{N-1}} \int_0^1 \left| \frac{\pd u}{\pd s} \right|^p s^{N-1-\alpha +p} (1-s^\gamma)^{-\beta +p} \,ds \,dS_\w,\\
\int_{B_1} \frac{\left| \nabla u_\la \cdot \frac{x}{|x|} \right|^p}{|x|^{\alpha-p}\( 1-|x|^\gamma \)^{\beta-p}} dx 
&=\int_{\mathbb{S}^{N-1}} \int_0^1 \left| \frac{\pd u}{\pd s} \right|^p \la^{Ap} \(  \frac{ds}{dr} \)^{p-1} r^{N-1-\alpha +p} (1-r^\gamma)^{-\beta +p} \,ds \,dS_\w,
\end{align*}
(\ref{scale 1}) implies that 
\begin{align}\label{relation 1}
\( \frac{ds}{dr} \)^{p-1} \cdot \frac{\la^{Ap} r^{N-1-\alpha} (1-r^\gamma)^{-\beta} }{s^{N-1-\alpha} (1-s^\gamma)^{-\beta}} = \( \frac{r (1-r^\gamma) }{s (1-s^\gamma)} \)^p.
\end{align}
In the same way as above, we have 
\begin{align}\label{relation 2}
\frac{ds}{dr}  = \la^{Ap}  \frac{r^{N-1-\alpha} (1-r^\gamma)^{-\beta} }{s^{N-1-\alpha} (1-s^\gamma)^{-\beta}} 
\end{align}
from (\ref{scale 2}). By (\ref{relation 1}) and (\ref{relation 2}), we have
\begin{align*}
s^{N-\alpha} (1-s^\gamma)^{1-\beta} = \la^{Ap} r^{N-\alpha} (1-r^\gamma)^{1-\beta}
\end{align*}
which implies that
\begin{align*}
\frac{ds}{dr}  = \la^{Ap}  \frac{r^{N-\alpha-1} (1-r^\gamma)^{-\beta}}{s^{N-\alpha -1} (1-s^\gamma)^{-\beta}} \cdot \frac{\left[ 1- \left\{ \frac{N-\alpha -(\beta-1)\gamma}{N-\alpha} \right\} r^\gamma \right]}{\left[ 1- \left\{ \frac{N-\alpha -(\beta-1)\gamma}{N-\alpha} \right\} s^\gamma \right]}.
\end{align*}
Therefore, we have $\alpha = N- (\beta -1)\gamma$ by comparing it with (\ref{relation 2}). 

In the same way as above, we can also show that the inequality (\ref{IH log}) with $\alpha < N$ is not invariant under the scaling $u_\la (r\w)= \la^A u(s \w)$. 
\end{remark}

%%%%%%%%%%%%%%%%%%%%%%%%%%%%%%%%%%

The following minimization problem associated with the Hardy-Sobolev type inequality is well-known.

\begin{ThmA}(Ref. \cite{H} Lemma 3.1 or \cite{HK})
Let $1<p<+\infty, N \ge 2$, and $W^{1,p}_{A, B, {\rm rad}}(\re^N)$ be the completion of $C_{c, {\rm rad}}^{\infty}(\re^N)$ with respect to the norm $\| \nabla (\cdot )\|_{L^p(\re^N ; \, |y|^{Ap} dy)}$. Assume that $p, q, N, A,$ and $B$ satisfy
\begin{align}\label{condi}
(1-A+B)p <N, 0 < \frac{1}{p} -\frac{1}{q} = \frac{1-A+B}{N}, -\frac{N}{q} < B.
\end{align} 
Under these assumptions we set
\begin{align*}
S_{{\rm rad}} = \inf \left\{ \, \int_{\re^N} | \nabla v |^p |y|^{Ap}\,dx  \,\,\Biggr| \,\, v \in W^{1,p}_{A, B, {\rm rad}}(\re^N),\, \int_{\re^N} | v |^q |y|^{Bq} dx =1 \, \right\}.
\end{align*}
Then 
\begin{align*}
S_{{\rm rad}} &= \pi^{\frac{p(1-A+B)}{2}} N \( \frac{N-(1-A+B)p}{p-1} \)^{p-1} \( \frac{N-p+pA}{N-(1-A+B)p} \)^{p-\frac{p(1-A+B)}{N}}
\times \\
&\( \frac{2(p-1)}{(1-A+B)p} \)^{\frac{p(1-A+B)}{N}} \left\{ \, \frac{\Gamma \( \frac{N}{p(1-A+B)} \) \,\Gamma \( \frac{N(p-1)}{p(1-A+B)} \)}{\Gamma \( \frac{N}{2} \) \, \Gamma \( \frac{N}{1-A+B} \)} \, \right\}^{\frac{p(1-A+B)}{N}}.
\end{align*}
Moreover, $S_{{\rm rad}}$ is attained by functions of the form
\begin{align*}
V(y)=\left[ a + b |y|^{\frac{ph}{p-1}} \right]^{1-\frac{N}{p(1-A+B)}}\quad \( \,a, b >0, h= \frac{(1-A+B)(N-p+pA)}{N-(1-A+B)p} \,\)
\end{align*}
\end{ThmA}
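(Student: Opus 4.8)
The plan is to reduce this radial weighted (Caffarelli--Kohn--Nirenberg type) minimization to the sharp one--dimensional Sobolev inequality of Bliss, whose optimal constant and extremals are explicitly known, and then undo the reduction. Since every admissible $v$ is radial, writing $v(y)=\phi(|y|)$ and passing to polar coordinates gives
\begin{align*}
\int_{\re^N} |\nabla v|^p |y|^{Ap}\,dy &= \omega_{N-1} \int_0^\infty |\phi'(r)|^p\, r^{Ap+N-1}\,dr, \\
\int_{\re^N} |v|^q |y|^{Bq}\,dy &= \omega_{N-1} \int_0^\infty |\phi(r)|^q\, r^{Bq+N-1}\,dr,
\end{align*}
with $\omega_{N-1}=2\pi^{N/2}/\Gamma(N/2)$, so that, up to this explicit constant, the problem becomes a one--dimensional weighted problem for $\phi$ on $(0,\infty)$.

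Next I would make the power change of variables $\rho=r^{h}$, $\psi(\rho)=\phi(\rho^{1/h})$, with $h=\tfrac{(1-A+B)(N-p+pA)}{N-(1-A+B)p}$ as in the statement. A direct computation shows this is exactly the exponent converting both weights into those of an \emph{unweighted} radial problem in the fictitious dimension $\nu:=\tfrac{N}{1-A+B}$:
\begin{align*}
\int_0^\infty |\phi'(r)|^p r^{Ap+N-1}\,dr &= h^{p-1}\int_0^\infty |\psi'(\rho)|^p \rho^{\nu-1}\,d\rho, \\
\int_0^\infty |\phi(r)|^q r^{Bq+N-1}\,dr &= h^{-1}\int_0^\infty |\psi(\rho)|^q \rho^{\nu-1}\,d\rho,
\end{align*}
where $q=\tfrac{\nu p}{\nu-p}$ is the critical Sobolev exponent of dimension $\nu$. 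The hypotheses in (\ref{condi}) are precisely what makes this work: $0<\tfrac1p-\tfrac1q=\tfrac{1-A+B}{N}$ yields the identities $\nu=\tfrac{N}{1-A+B}$ and $q=\tfrac{\nu p}{\nu-p}$; $(1-A+B)p<N$ gives $\nu>p$; and $-N/q<B$ gives $Bq+N>0$, which together with $(1-A+B)p<N$ also forces $N-p+pA>0$, so $h>0$ and the substitution is admissible with all integrals convergent at the origin.

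Then I would invoke Bliss's sharp inequality (see \cite{H,HK}): among all $\psi$ with $\int_0^\infty|\psi|^q\rho^{\nu-1}\,d\rho$ fixed, $\int_0^\infty|\psi'|^p\rho^{\nu-1}\,d\rho$ is minimized exactly by the dilations of $\psi(\rho)=(a+b\,\rho^{p/(p-1)})^{1-\nu/p}$, $a,b>0$, and its value is obtained from the Beta integral
\begin{equation*}
\int_0^\infty \rho^{\,s-1}\big(1+\rho^{p/(p-1)}\big)^{-t}\,d\rho \;=\; \frac{p-1}{p}\;\frac{\Gamma\!\big(\tfrac{(p-1)s}{p}\big)\,\Gamma\!\big(t-\tfrac{(p-1)s}{p}\big)}{\Gamma(t)}.
\end{equation*}
Undoing the substitution sends $\psi$ to $\phi(r)=(a+b\,r^{ph/(p-1)})^{1-\nu/p}$, hence $V(y)=(a+b\,|y|^{ph/(p-1)})^{1-N/(p(1-A+B))}$, which is the asserted extremal since $1-\nu/p=1-\tfrac{N}{p(1-A+B)}$.

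Finally, to obtain the value $S_{\rm rad}$ one tracks the factors: meeting the normalization $\int_{\re^N}|v|^q|y|^{Bq}\,dy=1$ costs a scaling factor, and one arrives at $S_{\rm rad}=\omega_{N-1}^{\,1-p/q}\,h^{\,p-1+p/q}\,T_\nu$, where $T_\nu$ denotes the value of the unweighted radial problem above; substituting $\nu=\tfrac{N}{1-A+B}$ and $q=\tfrac{Np}{N-p(1-A+B)}$ and simplifying the Gamma factors with $\Gamma(x+1)=x\Gamma(x)$ produces the stated closed form (in particular $\pi^{p(1-A+B)/2}$ appears as $(\pi^{N/2})^{1-p/q}$, since $1-\tfrac pq=\tfrac{p(1-A+B)}{N}$). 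I expect this last step to be the main obstacle: it is pure bookkeeping of constants --- combining $\omega_{N-1}$, the power $h^{p-1+p/q}$, the scaling factor from the constraint, and the ratio of Gamma functions from Bliss's constant in dimension $\nu$ --- and one must check that all exponents and all $\Gamma$-arguments ($\tfrac{N}{p(1-A+B)}$, $\tfrac{N(p-1)}{p(1-A+B)}$, $\tfrac{N}{1-A+B}$, $\tfrac N2$) assemble exactly into the displayed expression, while keeping straight which hypothesis in (\ref{condi}) secures which sign and convergence condition.
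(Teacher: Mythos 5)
The paper states Theorem~A as a quoted result, citing Horiuchi \cite{H} (Lemma 3.1) and Horiuchi--Kumlin \cite{HK}, and offers no proof of its own; so you are to be judged against the standard argument in those references, and your outline follows it faithfully. The reduction is correct: in polar coordinates the radial problem becomes one-dimensional, and the power substitution $\rho=r^{h}$ with $\psi(\rho)=\phi(\rho^{1/h})$ indeed carries $r^{Ap+N-1}\,dr$ into $h^{p-1}\rho^{\nu-1}\,d\rho$ and $r^{Bq+N-1}\,dr$ into $h^{-1}\rho^{\nu-1}\,d\rho$ precisely when $h=\frac{(1-A+B)(N-p+pA)}{N-p(1-A+B)}$ and $\nu=\frac N{1-A+B}$; the identity $Bq=p(h-1+A)$ needed for the second weight reduces, after clearing denominators, to $N(1-A+B)-N+NA=NB$, which holds. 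The hypotheses in (\ref{condi}) enter exactly as you say: $0<\frac1p-\frac1q=\frac{1-A+B}N$ gives $1-A+B>0$ and $q=\frac{\nu p}{\nu-p}$; $(1-A+B)p<N$ gives $\nu>p$; and $-N/q<B$ is equivalent to $\nu h=Bq+N>0$, whence $h>0$ and $N-p+pA>0$. Invoking Bliss in dimension $\nu$ then gives the extremals $\psi(\rho)=(a+b\rho^{p/(p-1)})^{1-\nu/p}$, hence $V(y)=(a+b|y|^{ph/(p-1)})^{1-N/(p(1-A+B))}$, as claimed.

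The one genuine gap is the final assembly of the constant, which you flag but do not carry out. It does work: the exponent $1-\tfrac pq=\tfrac{p(1-A+B)}N$ converts $\omega_{N-1}^{1-p/q}=(2\pi^{N/2}/\Gamma(N/2))^{1-p/q}$ into the $\pi^{p(1-A+B)/2}$ and the $\Gamma(N/2)$ in the denominator, $h^{p-1+p/q}$ supplies the power of $\frac{N-p+pA}{N-(1-A+B)p}$, and the Beta integral in fictitious dimension $\nu$ yields $\Gamma\bigl(\tfrac N{p(1-A+B)}\bigr)\Gamma\bigl(\tfrac{N(p-1)}{p(1-A+B)}\bigr)/\Gamma\bigl(\tfrac N{1-A+B}\bigr)$ together with the factor $\bigl(\tfrac{2(p-1)}{(1-A+B)p}\bigr)^{p(1-A+B)/N}$; a sanity check at $A=B=0$ (where $h=1$, $\nu=N$) recovers Talenti's Sobolev constant after using $\Gamma(x+1)=x\Gamma(x)$ on the two $\Gamma$-factors. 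To make the argument self-contained you should either perform this bookkeeping or replace the appeal to ``Bliss'' with an explicit citation of the normalized constant in the form used by \cite{H}.
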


%A>\frac{p-N}{p}, p < q = q(p) = \frac{Np}{N-(1-A+B)p}< +\infty
Note that the assumption (\ref{condi}) is equivalent to the following:
\begin{align}\label{condi 2}
p < q = q(p)= \frac{Np}{N- (1-A+ B) p} < +\infty, \quad A > \frac{p-N}{p}
\end{align}
By Theorem A and the transformation (\ref{trans}), we can also obtain several results for the minimization problem $T_{{\rm rad}}$ associated with an improved Hardy-Sobolev type inequality for radially symmetric functions. In fact, we set
\begin{align*}
T_{{\rm rad}} := \inf \left\{ \, \int_{B_R} \frac{| \nabla u |^p}{|x|^{\alpha-p}\( 1-\(\frac{|x|}{R} \)^\gamma \)^{\beta-p}} dx  \,\,\Biggr| \,\, u \in X^{1,p}_{\alpha, \beta, {\rm rad}},\, \int_{B_R} \frac{| u |^q}{|x|^{N- \frac{q}{p} (N-\alpha)}\( 1-\(\frac{|x|}{R} \)^\gamma \)^{1+\frac{(\beta -1)}{p} q}} dx =1 \, \right\},
\end{align*}
where $q>p, \alpha < N, \beta >1$, $\gamma = \frac{N-\alpha}{\beta -1}$, and $X^{1,p}_{A, B, {\rm rad}}$ be the completion of $C_{c, {\rm rad}}^{\infty}(B_R)$ with respect to the norm $\| \nabla (\cdot )\|_{L^p(B_R ; \, |x|^{p-\alpha} (1- (|x| /R)^\gamma )^{p-\beta} dx)}$. 
By using the transformation (\ref{trans}), we have
\begin{align*}
&\int_{\re^N} |\nabla v|^p |y|^{pA} dy
=\int_{B_R} \frac{|\nabla u|^p |x|^{pA}}{\( 1- \( \frac{|x|}{R} \)^\gamma  \)^{\beta-p}} dx
=\int_{B_R} \frac{|\nabla u|^p}{|x|^{\alpha -p} \( 1- \( \frac{|x|}{R} \)^\gamma  \)^{\beta-p}} dx, \\
&\int_{\re^N} | v|^q |y|^{Bq} dy
=\int_{B_R} \frac{| u|^q}{|x|^{-Bq}\( 1-\(\frac{|x|}{R} \)^\gamma \)^{1+ \frac{N+ Bq}{\gamma}}} dx
= \int_{B_R} \frac{| u |^q}{|x|^{N- \frac{q}{p} (N-\alpha)}\( 1-\(\frac{|x|}{R} \)^\gamma \)^{1+\frac{(\beta -1)}{p} q}} dx,
\end{align*}
where $\alpha = p-pA, q= \frac{Np}{N-p + Ap -Bp} = \frac{Np}{(\beta -1)\gamma -Bp}, -Bq = N- \frac{q}{p} (\beta -1)\gamma = N- \frac{q}{p} (N-\alpha)$ from (\ref{condi 2}). 
%Since the assumption (\ref{condi 2}) is equivalent to 
%\begin{align}\label{condi 3}
%p < q = q(p)= \frac{Np}{N- (1-A+ B) p} < +\infty, \quad \alpha < N,
%\end{align}
Therefore, we obtain the following from Theorem A.

\begin{theorem}\label{T IHS rad}
Let $1<p<q<+\infty, N \ge 2, \alpha < N, \beta >1,$ and $\gamma = \frac{N-\alpha}{\beta -1}$. 
Then 
\begin{align*}
T_{{\rm rad}} = \pi^{\frac{N(q-p)}{2q}} N \(\frac{Np}{q(p-1)} \)^{p-1} \( \frac{q(N-\alpha )}{Np} \)^{p-1+\frac{p}{q}}
 \( \frac{2q(p-1)}{N(q-p)} \)^{\frac{q-p}{q}} \left\{ \, \frac{\Gamma \( \frac{q}{q-p} \) \,\Gamma \( \frac{q(p-1)}{q-p} \)}{\Gamma \( \frac{N}{2} \) \, \Gamma \( \frac{qp}{q-p} \)} \, \right\}^{\frac{q-p}{q}}.
\end{align*}
Moreover, $T_{{\rm rad}}$ is attained by functions of the form
\begin{align*}
U(x)=\left[ a + b \( \,\( \frac{|x|}{R} \)^{-\gamma} -1 \)^{-\frac{(q-p)(\beta -1)}{p(p-1)}} \right]^{-\frac{p}{q-p}}\quad \( \,a, b >0 \,\)
\end{align*}
\end{theorem}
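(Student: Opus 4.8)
The plan is to deduce Theorem~\ref{T IHS rad} directly from Theorem~A together with the transformation \eqref{trans}, exactly in the spirit of the computation carried out just before the statement. First I would fix the dictionary between the parameters: given $p,q,N,\alpha,\beta,\gamma$ as in the hypotheses, set $A = \frac{p-\alpha}{p}$ and choose $B$ so that $q = \frac{Np}{N-(1-A+B)p}$, i.e. $-Bq = N - \frac{q}{p}(N-\alpha)$ and $(1-A+B)p = (\beta-1)\gamma\cdot\frac{p}{\,\cdot\,}$ is pinned down by $\gamma = \frac{N-\alpha}{\beta-1}$. I would then check that the standing assumptions $1<p<q<\infty$, $\alpha<N$, $\beta>1$ translate precisely into condition \eqref{condi} (equivalently \eqref{condi 2}) for $(p,q,N,A,B)$: $A > \frac{p-N}{p}$ follows from $\alpha<N$, and $p<q<\infty$ is assumed. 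This is the bookkeeping step and is the one place where one must be careful, since a sign slip in $B$ propagates everywhere.

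Next I would invoke the two identities displayed before the statement, namely that under \eqref{trans} with $\gamma = \frac{N-\alpha}{\beta-1}$ one has
\begin{align*}
\int_{\re^N} |\nabla v|^p |y|^{pA}\,dy &= \int_{B_R} \frac{|\nabla u|^p}{|x|^{\alpha-p}\(1-(|x|/R)^\gamma\)^{\beta-p}}\,dx,\\
\int_{\re^N} |v|^q |y|^{Bq}\,dy &= \int_{B_R} \frac{|u|^q}{|x|^{N-\frac{q}{p}(N-\alpha)}\(1-(|x|/R)^\gamma\)^{1+\frac{(\beta-1)}{p}q}}\,dx,
\end{align*}
valid for radial $u$ (note the gradient is purely radial, so $|\nabla u| = |\nabla u \cdot \frac{x}{|x|}|$ and no information is lost). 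Since \eqref{trans} is a radial diffeomorphism of $B_R$ onto $\re^N$ carrying $C^\infty_{c,\mathrm{rad}}(\re^N)$ bijectively onto a dense subset defining $X^{1,p}_{\alpha,\beta,\mathrm{rad}}$, the two constrained minimization problems are literally the same problem written in different coordinates, hence $T_{\mathrm{rad}} = S_{\mathrm{rad}}$, and extremals correspond to extremals.

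It then remains to substitute the chosen $A,B$ into the closed-form expressions of Theorem~A. For the value of $S_{\mathrm{rad}}$ I would replace $(1-A+B)p$ by $\frac{N(q-p)}{q}$ (which is what $0<\frac1p-\frac1q = \frac{1-A+B}{N}$ gives) and $N-p+pA = N-\alpha$, $N-(1-A+B)p = \frac{Np}{q}$, and simplify; the exponent $p-\frac{p(1-A+B)}{N} = p-1+\frac pq$ and the Gamma-function arguments $\frac{N}{p(1-A+B)} = \frac{q}{q-p}$, $\frac{N(p-1)}{p(1-A+B)} = \frac{q(p-1)}{q-p}$, $\frac{N}{1-A+B} = \frac{qp}{q-p}$ all fall out, yielding exactly the stated formula for $T_{\mathrm{rad}}$. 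For the extremal, I would take $V(y) = [a+b|y|^{\frac{ph}{p-1}}]^{1-\frac{N}{p(1-A+B)}}$ from Theorem~A, compute $1-\frac{N}{p(1-A+B)} = -\frac{p}{q-p}$, and express $|y|$ via \eqref{trans} as $|y| = (\,(|x|/R)^{-\gamma}-1\,)^{-1/\gamma}$, so that $|y|^{\frac{ph}{p-1}} = (\,(|x|/R)^{-\gamma}-1\,)^{-\frac{ph}{\gamma(p-1)}}$; checking $\frac{ph}{\gamma(p-1)} = \frac{(q-p)(\beta-1)}{p(p-1)}$ using $h = \frac{(1-A+B)(N-p+pA)}{N-(1-A+B)p}$ recovers the claimed $U(x)$. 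The only genuine obstacle is arithmetic: verifying the half-dozen parameter identities and then grinding the constant $S_{\mathrm{rad}}$ into the advertised form for $T_{\mathrm{rad}}$ without error; there is no conceptual difficulty once the transformation identities (already established in the text) are in hand.
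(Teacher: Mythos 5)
Your proposal is correct and follows exactly the route the paper takes: the theorem is obtained by applying Theorem A after the parameter translation $\alpha = p-pA$, $-Bq = N - \tfrac{q}{p}(N-\alpha)$, using the two transformation identities already displayed in the text to conclude $T_{\rm rad} = S_{\rm rad}$, and then substituting into the closed-form constant and extremal of Theorem A. The arithmetic you outline, i.e.\ $(1-A+B)p = \tfrac{N(q-p)}{q}$, $N-p+pA = N-\alpha$, $1-\tfrac{N}{p(1-A+B)} = -\tfrac{p}{q-p}$, and $\tfrac{ph}{\gamma(p-1)} = \tfrac{(q-p)(\beta-1)}{p(p-1)}$, all checks out and reproduces the stated formula and extremal.
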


\begin{remark}
In Theorem \ref{T IHS rad}, the result in the case where $\beta = p$ coincides with Theorem 1.3 in \cite{I}. In the special case where $\alpha = \beta =p$, a modified minimization problem is also studied without radially symmetry and the transformation, see \cite{S(NA)}. 
The optimal constant and the non-attainability in the case where $q=p$ are studied in Theorem \ref{T IH} in the present paper. 
A minimization problem with logarithmic weight is studied by \cite{HK,II(IMRN),ST,S(JDE)}. 
\end{remark}

%%%%%%%%%%%%%%%%%%%%%%%%%%%%%%%

\subsection{The divergence theorem and generalizations of two Hardy type inequalities: Proof of Theorem \ref{T IH another}}\label{Sec 2.3}

We show Theorem \ref{T IH another} by using the divergence theorem in a slightly different way from the proof of Theorem \ref{T IH}. 

\begin{proof}(Proof of Theorem \ref{T IH another}) 
For the simplicity, we set $R=1$. Note that 
\begin{align}\label{Holder mae}
\int_{B_1} \frac{(N-\alpha ) \,|u|^p}{|x|^\alpha \( 1- |x|^\gamma \)^\beta}
+ \frac{\beta \,\gamma \,|u|^p}{|x|^{\alpha - \gamma} \( 1- |x|^\gamma \)^{\beta+1}} \,dx = \int_{B_1} {\rm div} \( \frac{x}{|x|^\alpha \( 1- |x|^\gamma \)^\beta} \)\,|u|^p \,dx.
\end{align}

\noindent
(I) If $\alpha =N$, the inequality (\ref{IH gene another}) is trivial. Therefore, we assume that $\alpha < N$. If we drop the second term on the left-hand side of (\ref{Holder mae}), then we have
\begin{align*}
\( \frac{N-\alpha}{p} \) \int_{B_1} \frac{|u|^p}{|x|^\alpha \( 1- |x|^\gamma \)^\beta} \,dx
&\le \int_{B_1} \frac{|u|^{p-1} \, \left| \nabla u \cdot \frac{x}{|x|} \right|}{|x|^{\alpha -1} \( 1- |x|^\gamma \)^\beta} \,dx \\
&\le  \( \int_{B_1} \frac{|u|^p}{|x|^\alpha \( 1- |x|^\gamma \)^\beta} \,dx \)^{1-\frac{1}{p}}
\( \int_{B_1} \frac{\left| \nabla u \cdot \frac{x}{|x|} \right|^p}{|x|^{\alpha -p} \( 1- |x|^\gamma \)^\beta} \,dx \)^{\frac{1}{p}}
\end{align*}
which implies the desired inequality (\ref{IH gene another}) for functions $u \in C_c^1 (B_1)$. In order to show the optimality of the constant $( \frac{N-\alpha}{p} )^p$ in (\ref{IH gene another}), we consider the test function $f_A (x)= \phi_\delta (x) \,|x|^A$ 
for $A < \frac{N-\alpha}{p}$ and small $\delta >0$, 
where $\phi_\delta$ is a smooth radially symmetric function which satisfies $0 \le \phi_\delta \le 1, \phi_\delta \equiv 0$ on $B_1 \setminus B_{2\delta}$ and $\phi_\delta \equiv 1$ on $B_{\delta}$. Then we have
\begin{align*}
\( \frac{N-\alpha}{p} \)^p 
&\le \frac{\int_{B_1} \frac{\left| \nabla f_A \cdot \frac{x}{|x|} \right|^p}{|x|^{\alpha -p} \( 1-|x|^\gamma \)^{\beta}} \,dx}{\int_{B_1} \frac{|f_A|^p}{|x|^\alpha \( 1-|x|^\gamma \)^\beta} \,dx} \\
&\le \frac{A^p \int_0^{\delta} \( 1-r^\gamma \)^{-\beta} r^{Ap + N-\alpha -1}\,dr + \int_{\delta}^{2\delta} |(\,\phi_\delta \,r^A)'|^p \( 1-r^\gamma \)^{-\beta} r^{N-1-\alpha+p}\,dr}{\int_0^{\delta} \( 1-r^\gamma \)^{-\beta} r^{Ap +N-\alpha -1}\,dr} \\
&= \( \frac{N-\alpha}{p} \)^p + o(1) \quad \( A \to \frac{N-\alpha}{p} \).
\end{align*}
Therefore the constant $\( \frac{N-\alpha}{p} \)^p$ in (\ref{IH gene another}) is optimal. 
If $\beta >0$, then we can observe that the optimal constant $\( \frac{N-\alpha}{p} \)^p$ is not attained since we drop the second term on the left-hand side of (\ref{Holder mae}) when we show the inequality (\ref{IH gene another}). If $\beta =0$, then the inequality (\ref{IH gene another}) becomes the classical Hardy type inequality in which the non-attainability is well-known. Thus, we omit the proof.

\noindent
(II) If we drop the first term on the left-hand side of (\ref{Holder mae}), then we have
\begin{align*}
&\( \frac{\beta -1}{p} \gamma \) \int_{B_1} \frac{|u|^p}{|x|^{\alpha - \gamma} \( 1- |x|^\gamma \)^{\beta }} \,dx
\le \int_{B_1} \frac{|u|^{p-1} \, \left| \nabla u \cdot \frac{x}{|x|} \right|}{|x|^{\alpha -1} \( 1- |x|^\gamma \)^{\beta -1}} \,dx \\
&\le \( \int_{B_1} \frac{|u|^p}{|x|^{\alpha - \gamma} \( 1- |x|^\gamma \)^{\beta }} \,dx \)^{1-\frac{1}{p}} \( \int_{B_1} \frac{\left| \nabla u \cdot \frac{x}{|x|} \right|^p}{|x|^{\alpha - \gamma +(\gamma -1) p} \( 1- |x|^\gamma \)^{\beta -p}} \,dx \)^{\frac{1}{p}}
\end{align*}
which implies the desired inequality (\ref{IH gene another 2}) for functions $u \in C_c^1 (B_1 \setminus \{ 0\})$. 
We shall show the inequality (\ref{IH gene another 2}) for functions $u \in C_c^1 (B_1)$. 
For $u \in C_c^1(B_1)$, we consider $u_\ep = u \, (1- \varphi_\ep ) \in C_c^1 (B_1 \setminus \{ 0\})$, where $\varphi_\ep \in C_{c, {\rm rad}}^\infty (B_1), 0 \le \varphi_\ep \le 1, \varphi_\ep \equiv 1$ on $B_\ep, \,\varphi_\ep \equiv 0$ on $B_1 \setminus B_{2\ep}$, and $|\nabla \varphi_\ep | \le C \ep^{-1}$. 
Then we can derive the inequality (\ref{IH gene another 2}) for $u \in C_c^1(B_1)$ from the the inequality (\ref{IH gene another 2}) for $u_\ep \in C_c^1(B_1)$ and the assumption of $\alpha$ by taking the limit of it as $\ep \to 0$ since  
\begin{align*}
&\int_{B_1} \frac{\left| \nabla (u - u_\ep ) \cdot \frac{x}{|x|} \right|^p}{|x|^{\alpha - \gamma +(\gamma -1) p} \( 1- |x|^\gamma \)^{\beta -p}} \,dx \\
&\le \int_{B_\ep} \frac{\left| \nabla u  \cdot \frac{x}{|x|} \right|^p}{|x|^{\alpha - \gamma +(\gamma -1) p} \( 1- |x|^\gamma \)^{\beta -p}} \,dx  
+ \int_{B_{2\ep} \setminus B_\ep } \frac{\left| \nabla (u \varphi_\ep ) \cdot \frac{x}{|x|} \right|^p}{|x|^{\alpha - \gamma +(\gamma -1) p} \( 1- |x|^\gamma \)^{\beta -p}} \,dx \\
&\le C \| \nabla u \|_{\infty}^p \,\int_0^{2\ep} r^{N-1 -\alpha+ \gamma - (\gamma -1)p} \, dr + C \,\| u\|_{\infty}^p \ep^{-p}  \int_{0}^{2\ep} r^{N-1-\alpha +\gamma -  (\gamma -1) p}\, dr  \to 0 \quad (\ep \to 0),\\
&\int_{B_1} \frac{|u-u_\ep |^p}{|x|^{\alpha - \gamma} \( 1- |x|^\gamma \)^{\beta }} \,dx \to 0 \quad (\ep \to 0).
\end{align*}
The optimality of the constant $( \frac{\beta -1}{p} \gamma )^p$ in (\ref{IH gene another 2}) can be shown by the same test function $f_{A}$ in the proof of Theorem \ref{T IH}. Therefore, we omit the proof. If $\alpha < N$, then we can observe that the optimal constant $\( \frac{\beta -1}{p} \gamma \)^p$ is not attained since we drop the first term on the left-hand side of (\ref{Holder mae}) when we show the inequality (\ref{IH gene another 2}). Therefore, we shall show the non-attainability of the optimal constant $\( \frac{\beta -1}{p} \gamma \)^p$ in the case where $\alpha =N$. 
In the same way as the proof of Theorem \ref{T IH remainder 2}, we have
\begin{align*}
\int_{B_1} \frac{|u|^p}{|x|^{N-\gamma} \( 1-|x|^\gamma \)^\beta} \,dx 
%&= \int_0^1 r^{\gamma -1} (1-r^\gamma )^{-\beta} \int_{\mathbb{S}^{N-1}} |u(r\w )|^p \, dr dS_{\w} \\
&= \frac{1}{(\beta -1) \gamma} \int_0^1 \int_{\mathbb{S}^{N-1}} |u|^p \,\frac{d}{dr} \left[ \( 1- r^{\gamma} \)^{-\beta+1} \right] \, dr dS_{\w} \\
&=\frac{p}{(\beta -1)\gamma} \iint |u|^{p-2}u \( - \frac{\pd u}{\pd r} \) (1-r^\gamma )^{-\beta +1} \, dr dS_{\w} -|u(0)|^p \w_{N-1} \\
&\le \iint |\,\xi \,|^{p-2} \xi \eta \,drdS_{\w},
\end{align*}
where 
\begin{align*}
\eta = \frac{p}{(\beta -1)\gamma} \( -\frac{\pd u}{\pd r} (r\w  ) \) r^{-\frac{(\gamma -1)(p -1)}{p}} (1- r^\gamma )^{\frac{-\beta +p}{p}}, \quad 
\xi = u(r\w) \,r^{\frac{\gamma -1}{p}} (1- r^\gamma )^{\frac{-\beta}{p}}.
\end{align*}
Since $|\,\xi \,|^{p-2} \xi \eta = \frac{1}{p} \,|\,\eta\,|^p + \frac{p-1}{p} \,|\,\xi\,|^p - R_p(\xi, \,\eta)$, we have
\begin{align*}
\( \frac{\beta -1}{p} \gamma \)^p \int_{B_1} \frac{|u|^p}{|x|^{N-\gamma} \( 1-|x|^\gamma \)^\beta} \,dx 
&+ p \( \frac{\beta -1}{p} \gamma \)^p \iint R_p(\xi,\, \eta) \, drdS_\w \\
&\le \int_{B_1} \frac{\left| \nabla u \cdot \frac{x}{|x|} \right|^p}{|x|^{N-\gamma + (\gamma -1)p} \( 1-|x|^\gamma \)^{\beta - p}} \,dx.
\end{align*}
If we assume that the optimal constant $\( \frac{\beta -1}{p} \gamma \)^p$ is attained by some function $u$, then $R_p(\xi, \,\eta) =0$ which implies that $\xi = \eta$. Therefore, for fixed $\w \in \s$, $g(r) = u(r\w)$ satisfies following ODE:
\begin{align*}
-g'(r) = \( \frac{\beta -1}{p} \gamma \) \frac{r^{\gamma -1}}{1-r^\gamma} \, g(r), \,\, r \in (0,1), \quad g(1) = 0 
\end{align*}
Since $g(r) = (1-r^\gamma )^{\frac{\beta -1}{p}}$, $u(x) = (1-|x|^\gamma )^{\frac{\beta -1}{p}} f\( \frac{x}{|x|} \)$ for some function $f: \s \to \re$. However, the right-hand side of the inequality (\ref{IH gene another 2}) diverges since 
\begin{align*}
\int_{B_1} \frac{\left| \nabla u \cdot \frac{x}{|x|} \right|^p}{|x|^{N-\gamma + (\gamma -1)p} \( 1-|x|^\gamma \)^{\beta - p}} \,dx
&=  \( \frac{\beta -1}{p} \gamma \)^p \int_{\s} f(\w) \int_0^1 \frac{r^{\gamma -1}}{1-r^\gamma} \,drdS_\w \\
&= \( \frac{\beta -1}{p} \gamma \)^p \int_{\s} f(\w) \left[  -\log (1-r^\gamma ) \right]^1_0 \,dS_\w = +\infty.
\end{align*}
Therefore, the optimal constant $\( \frac{\beta -1}{p} \gamma \)^p$ is not attained even in the case where $\alpha =N$. 

%In the case where $p =1$, we consider the following cut-off function (Ref. \cite{HK} Definition 8.1):
%\begin{align*}
%\psi_{\ep, \eta} (x) = g
%\end{align*}
\qed
\end{proof}

%%%%%%%%%%%%%%%%%%%%%%%%%%%%%%%%%

% \S 3 Higher order inequalities 

%%%%%%%%%%%%%%%%%%%%%%%%%%%%%%%%%
\section{Higher order inequalities}\label{Sec Higher}

The higher order generalization of Hardy type inequalities (\ref{H_p}), (\ref{H_p geo}) are called Rellich type inequalities due to the celebrated work by Rellich \cite{Rellich}. 
In this section, we consider the higher order generalization of Theorem \ref{T IH}.

Let $k, m \in \N, k \ge 2, p >1$, and 
\begin{align*}
&\nabla^k u = \begin{cases}
              \lap^m u  \quad &\text{if} \,\, k=2m,\\
              \nabla \lap^m u  &\text{if} \,\, k=2m+1,
              \end{cases}\\
&A_{k,p,\alpha} = \begin{cases}\vspace{0.5em}
              \prod_{j =0}^{m-1} \dfrac{\{ N-\alpha + 2j p \} \{ N(p-1) +\alpha -2(j +1)p \} }{p^2} \quad &\text{if} \,\, k=2m,\\
              \frac{N-\alpha + 2mp}{p} \prod_{j =0}^{m-1} \dfrac{\{ N-\alpha + 2j p \} \{ N(p-1) +\alpha -2(j +1)p \} }{p^2}  &\text{if} \,\, k=2m+1.
              \end{cases}
\end{align*}
%As a matter of convenience, we set $A_{1,p, \alpha} = \frac{N-\alpha}{p}$. 
For $\alpha \in (2+ 2(m-1)p, N)$, the Rellich type inequality
\begin{equation}\label{R_p}
 A_{k,p,\alpha}^p \int_{B_R} \frac{|u|^p}{|x|^{\alpha}} dx \le \int_{B_R} \frac{|\nabla^k u|^p}{|x|^{\alpha -kp}}\,dx
\end{equation}
holds for all $u \in C_c^{k} (B_R )$ (Ref.  \cite{Rellich,DH,Mitidieri,GGM,MOW(Rellich),ST(Rellich)}). If $p=2$, it is known that the geometric Rellich inequality
\begin{align}\label{R_p geo}
\( \prod_{j=1}^{k} \frac{jp -1}{p}  \)^{p} \int_{B_R} \frac{|u|^p}{ {\rm dist}(x, \pd B_R)^{kp}} \,dx
\le \int_{B_R} |\nabla^k u|^p\,dx
\end{align}
holds for all $u \in C_c^{k} (B_R )$ (Ref. \cite{O,B}). For the case where $p \not=2$, see the important remark in \cite{B(Math.Z)} p.879 and the end of this section.    
It is also known that both $A_{k,p}^p$ and $\( \prod_{j=1}^{k} \frac{jp -1}{p}  \)^{p}$ with $p=2$ are the optimal constants.  

%%%%%%%%%%%%%%%%%%%%%%%%%%%%%%%%%%%%%

%%%%%%%%%%%%%%%%%%%%%%%%%%%%%%%%%

% \S 3.1 L^2-Rellich type inequalities

%%%%%%%%%%%%%%%%%%%%%%%%%%%%%%%%%

%%%%%%%%%%%%%%%%%%%%%%%%%%%%%%%%%%%%%%%%%%%%%

\subsection{Improved Rellich inequalities on $L^2$}\label{S IR p=2}

%%%%%%%%%%%%%%%%%%%%%%%%%%%%%%%%%%%%%%%%%%%%%%%%%

In this subsection, we treat the case where $p=2$. 

\begin{theorem}\label{T IR p=2}%\textcolor{white}{a}\\

\noindent
(I) If $4-N < \alpha \le N -\gamma$ and $\gamma  >0$, then the inequality
\begin{align}\label{IR gene p=2}
\( \frac{N+\alpha -4}{4} \gamma \)^2 \int_{B_R} \frac{|u|^2}{|x|^{\alpha} \( 1-\( \frac{|x|}{R} \)^\gamma \)^{2}}\le \int_{B_R} \frac{|\lap u|^2}{|x|^{\alpha -4} }\,dx
\end{align}
holds for any functions $u \in C_{c}^{\infty}(B_R)$. Especially, when $\gamma = N-\alpha$, the constant $\( \frac{(N-\alpha)(N+\alpha -4)}{4} \)^2$ in (\ref{IR gene p=2}) is optimal and is not attained for $u \not\equiv 0$ for which the right-hand side is finite.

\noindent
(II) If 3 $\le \alpha \le \min \{ N-\gamma +2, N-3\gamma \}$ and $\gamma >0$, then the inequality
\begin{align}
\label{IR gene b p=2}
\( \frac{3}{4} \gamma^2 \)^2 \int_{B_R} \frac{|u|^2}{|x|^{\alpha} \( 1-\( \frac{|x|}{R} \)^\gamma \)^{4}} 
\le \int_{B_R} \frac{|\lap u|^2}{|x|^{\alpha -4} }\,dx
\end{align}
holds for any functions $u \in C_{c}^{\infty}(B_R)$. 
Furthermore, the constant $\( \frac{3}{4} \gamma^2 \)^2$ in (\ref{IR gene b p=2}) is optimal and is not attained for $u \not\equiv 0$ for which the right-hand side is finite.
\end{theorem}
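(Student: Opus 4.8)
The plan is to factor each of (\ref{IR gene p=2}) and (\ref{IR gene b p=2}) into an inner ``Rellich-to-Hardy'' step, which passes from $\lap u$ to the radial derivative $\nabla u\cdot\frac{x}{|x|}$, followed by a \emph{single} application of the improved Hardy inequality of Theorem~\ref{T IH} with $p=2$; the boundary weight $\bigl(1-(|x|/R)^{\gamma}\bigr)^{-1}$ is then produced entirely by Theorem~\ref{T IH}, while the inner step stays at the level of the classical Rellich argument. Concretely, for part (I) I would first establish the classical-type estimate
\begin{equation*}
\int_{B_R}\frac{|\lap u|^{2}}{|x|^{\alpha-4}}\,dx\ \ge\ \Bigl(\tfrac{N+\alpha-4}{2}\Bigr)^{2}\int_{B_R}\frac{\bigl|\nabla u\cdot\frac{x}{|x|}\bigr|^{2}}{|x|^{\alpha-2}}\,dx ,
\end{equation*}
valid for $u\in C_c^{\infty}(B_R)$ whenever $4-N<\alpha<N$, and for part (II) the improved version
\begin{equation*}
\int_{B_R}\frac{|\lap u|^{2}}{|x|^{\alpha-4}}\,dx\ \ge\ \Bigl(\tfrac{\gamma}{2}\Bigr)^{2}\int_{B_R}\frac{\bigl|\nabla u\cdot\frac{x}{|x|}\bigr|^{2}}{|x|^{\alpha-2}\bigl(1-(|x|/R)^{\gamma}\bigr)^{2}}\,dx .
\end{equation*}

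To prove these inner estimates I would use the identity $\lap u={\rm div}\!\bigl((\nabla u\cdot\tfrac{x}{|x|})\tfrac{x}{|x|}\bigr)+|x|^{-2}\lap_{\s}u$ and then pass to the spherical-harmonic decomposition $u=\sum_{k}u_{k}(r)Y_{k}(\w)$, so that $\int_{\s}|\lap u|^{2}\,d\w=\sum_{k}\bigl|(\partial_{r}^{2}+\tfrac{N-1}{r}\partial_{r}-\tfrac{\lambda_{k}}{r^{2}})u_{k}\bigr|^{2}$ and $\int_{\s}\bigl|\nabla u\cdot\tfrac{x}{|x|}\bigr|^{2}\,d\w=\sum_{k}|u_{k}'|^{2}$, with $\lambda_{k}=k(k+N-2)\ge0$. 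For each mode the substitution $v_{k}=r^{N-1}u_{k}'$ reduces the radial part to a one-dimensional inequality for $v_{k}$: in part (I) the one-dimensional Hardy inequality produces the constant $\bigl(\tfrac{N+\alpha-4}{2}\bigr)^{2}$ for $k=0$, and for $k\ge1$ one discards the nonnegative term $\lambda_{k}^{2}r^{-4}|u_{k}|^{2}$, integrates the remaining cross term by parts, and uses the one-dimensional Hardy inequality for $u_{k}$, the net coefficient being $(N-\alpha)\tfrac{N+\alpha-4}{2}+\lambda_{k}>0$ exactly because $N-\alpha>0$ and $N+\alpha-4>0$, so the $k\ge1$ terms only help; in part (II) one argues the same way but applies to $v_{k}$ the one-dimensional \emph{improved} Hardy inequality of Section~\ref{Appendix} (the $p=\beta=2$ counterpart of Theorem~\ref{T IH}), and the remaining hypotheses $3\le\alpha\le N-\gamma+2$ are precisely what makes the relevant one-dimensional weight exponent admissible and keeps the higher modes under control. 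Granting the inner estimates, I would apply Theorem~\ref{T IH}: in part (I) with $p=\beta=2$ and exponent $\alpha$ (legitimate since $\alpha\le N-\gamma$), which gives $\bigl(\tfrac{\gamma}{2}\bigr)^{2}\int_{B_R}\frac{|u|^{2}}{|x|^{\alpha}(1-(|x|/R)^{\gamma})^{2}}\,dx\le\int_{B_R}\frac{|\nabla u\cdot\frac{x}{|x|}|^{2}}{|x|^{\alpha-2}}\,dx$; in part (II) with $p=2$, $\beta=4$ and exponent $\alpha$ (legitimate since $\alpha\le N-3\gamma=N-(\beta-1)\gamma$), whose right-hand side carries exactly the weight $(1-(|x|/R)^{\gamma})^{\beta-2}=(1-(|x|/R)^{\gamma})^{2}$ and whose constant is $\bigl(\tfrac{3}{2}\gamma\bigr)^{2}$. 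Chaining then yields the constants $\bigl(\tfrac{N+\alpha-4}{2}\bigr)^{2}\bigl(\tfrac{\gamma}{2}\bigr)^{2}=\bigl(\tfrac{N+\alpha-4}{4}\gamma\bigr)^{2}$ and $\bigl(\tfrac{\gamma}{2}\bigr)^{2}\bigl(\tfrac{3}{2}\gamma\bigr)^{2}=\bigl(\tfrac{3}{4}\gamma^{2}\bigr)^{2}$ of (\ref{IR gene p=2}) and (\ref{IR gene b p=2}).

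For optimality I would test against truncated powers of the virtual extremals. In part (II), take $u_{\ep}(x)=\chi_{\ep}(x)\bigl(1-(|x|/R)^{\gamma}\bigr)^{\frac{3}{2}+\ep}$ with $\chi_{\ep}$ a cutoff supported away from shrinking neighbourhoods of $\partial B_R$ and of the origin, and let $\ep\downarrow0$: both sides of (\ref{IR gene b p=2}) are dominated by their logarithmic divergence near $|x|=R$, where $\lap u_{\ep}\sim\tfrac{3}{4}\gamma^{2}R^{-2}\bigl(1-(|x|/R)^{\gamma}\bigr)^{-1/2}$ and the cutoff contributions are of lower order, so the Rayleigh quotient tends to $\bigl(\tfrac{3}{4}\gamma^{2}\bigr)^{2}$ for every admissible $\alpha$. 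In part (I) with $\gamma=N-\alpha$, take instead $u_{\ep}(x)=\chi_{\ep}(x)\bigl(|x|^{-\gamma}-1\bigr)^{\frac{1}{2}-\ep}$ with $\chi_{\ep}$ supported away from a shrinking neighbourhood of the origin (and a fixed one of $\partial B_R$); the relation $\gamma=N-\alpha$ makes both integrands behave like $r^{-1+2\gamma\ep}$ near $0$, so the mass concentrates there and a double-limit computation — exactly of the type used in the proofs of Theorem~\ref{T IH} and of the classical Rellich inequality — gives $\bigl(\tfrac{N-\alpha}{2}\cdot\tfrac{N+\alpha-4}{2}\bigr)^{2}=\bigl(\tfrac{(N-\alpha)(N+\alpha-4)}{4}\bigr)^{2}$. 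For non-attainability, note that an equality in (\ref{IR gene p=2}) (resp.\ (\ref{IR gene b p=2})) with finite right-hand side forces equality in both the inner step and in the matching application of Theorem~\ref{T IH}; but then, by the inner estimate, the right-hand side of that application of Theorem~\ref{T IH} is finite, while Theorem~\ref{T IH} asserts that its constant is \emph{not} attained for $u\not\equiv0$ in that case — a contradiction. (Equivalently, by Remark~\ref{R virtual extremal} the only candidate is $u(x)=f(\tfrac{x}{|x|})\bigl((|x|/R)^{-\gamma}-1\bigr)^{(\beta-1)/2}$ with $\beta=2$, resp.\ $\beta=4$, and for such $u$ the right-hand side of (\ref{IR gene p=2}), resp.\ (\ref{IR gene b p=2}), diverges near $|x|=R$.)

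The main obstacle is the inner ``Rellich-to-Hardy'' step and, within it, the bookkeeping of the non-radial spherical-harmonic modes: verifying that the $k\ge1$ contributions carry the right sign is precisely what pins down the asymmetric hypotheses on $\alpha$ — the lower bound $\alpha\ge3$ and the two one-sided upper bounds $\alpha\le N-\gamma+2$ and $\alpha\le N-3\gamma$ in part (II), and $4-N<\alpha\le N-\gamma$ in part (I) — and in part (II) one must in addition identify the correct one-dimensional improved Hardy inequality, with its exact weight exponents, among the family in Section~\ref{Appendix}. Once the inner step is in place, the remainder is a transparent two-step chain together with the truncated-extremal computation borrowed from the proof of Theorem~\ref{T IH}.
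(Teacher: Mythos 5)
Your two-layer architecture (an inner ``Rellich-to-Hardy'' estimate that passes from $\lap u$ to the radial derivative $\nabla u\cdot\frac{x}{|x|}$, followed by a single application of Theorem~\ref{T IH} on $L^2$ with the appropriate $\beta$) is exactly the structure of the paper's proof, which expands $|\lap u|^2$ in polar coordinates, discards the nonnegative $|\lap_{\s} u|^2$, $|\nabla_{\s} u|^2$ and (in part~(II)) the $(N-1)(\alpha-3)|\partial_r u|^2$ terms, and then chains one-dimensional Hardy inequalities from Propositions~\ref{Prop WHR} and~\ref{Prop IWHR}. Your spherical-harmonic decomposition is just a repackaging of the paper's direct $\nabla_{\s}$/$\lap_{\s}$ integration-by-parts computation, and your coefficient accounting $(N-\alpha)\tfrac{N+\alpha-4}{2}+\lambda_k$, your chained constants, and your optimality/non-attainability arguments all match.

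There is, however, one step in your part~(II) description that would fail as written. You propose to apply the one-dimensional \emph{improved} Hardy inequality to $v_k=r^{N-1}u_k'$, as you did with the classical Hardy in part~(I). But the substitution shifts the radial exponent: with $w=v_0$ you would need Proposition~\ref{Prop IWHR} with $a+p=5-N-\alpha$ and $b=-2$, whose admissibility condition $a+1+(b+1)\gamma\ge0$ reads $\alpha\le 4-N-\gamma$, which is incompatible with $\alpha\ge 3$ for every $N\ge 2$. The correct move, and what the paper does, is to expand $|\lap u|^2 = |\partial_r^2 u|^2 + 2\tfrac{N-1}{r}\partial_r^2 u\,\partial_r u + \cdots$, integrate the cross term by parts to isolate the nonnegative coefficient $(N-1)(\alpha-3)$ in front of $|\partial_r u|^2$ (this is where $\alpha\ge 3$ is used), discard that term together with the angular ones, and then apply Proposition~\ref{Prop IWHR} directly with $w=\partial_r u$ (no $v_k$ substitution), which gives the admissibility condition $\alpha\le N+2-\gamma$ and the constant $(\gamma/2)^2$; the second factor $(\tfrac{3}{2}\gamma)^2$ and the constraint $\alpha\le N-3\gamma$ then come from Theorem~\ref{T IH} with $p=2,\beta=4$, exactly as you intend. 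So the chain you describe is right, but the specific one-dimensional inequality you would feed into the inner step of part~(II) has the wrong weight exponents; apply it to $\partial_r u$ rather than to $r^{N-1}\partial_r u$.
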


\begin{remark}\label{Rem higher geo}
It seems difficult to show the inequality (\ref{IR gene b p=2}) with the weight $\( 1- \( \frac{|x|}{R} \)^\gamma \)^{-\beta}$ on the right-hand side. 
This is one of the reasons why we cannot show the higher order case of the inequality (\ref{IR gene b p=2}), see also Remark \ref{Rem assumption}. 
\end{remark}

Our inequalities (\ref{IR gene p=2}), (\ref{IR gene b p=2}) give improvements of the classical Rellich type inequalities (\ref{R_p}), (\ref{R_p geo}) while keeping their best constants as follows.  

\begin{corollary}\label{C IR p=2}

\noindent
(I) Let $N \ge 5$. Then the inequalities 
\begin{align*}
\int_{B_R} \frac{|u|^2}{ |x|^{4}} \,dx
\le \int_{B_R} \frac{|u|^2}{|x|^{4} \( 1- \( \frac{|x|}{R} \)^{N-4}  \)^{2}} \,dx
\le \( \frac{N(N-4)}{4} \)^{-2} \int_{B_R} |\lap u|^2 \,dx
\end{align*}
hold for any functions $u \in W_0^{2, 2} (B_R)$. 

\noindent
(II) Let $N \ge 7$. Then the inequalities
\begin{align*}
\int_{B_R} \frac{|u|^2}{ {\rm dist}(x, \pd B_R)^{4}} \,dx
\le  \int_{B_R} \frac{|u|^2}{|x|^{4} \( 1- \frac{|x|}{R}  \)^{4}} \,dx
\le \( \frac{3}{4}  \)^{-2} \int_{B_R} |\lap u|^2\,dx
\end{align*}
hold for any functions $u \in W_0^{2, 2} (B_R)$.
\end{corollary}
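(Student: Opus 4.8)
The statement is a corollary of Theorem~\ref{T IR p=2}, so the plan is to obtain each chain of inequalities by a suitable specialization of the parameters $\alpha,\gamma$ in that theorem, then to insert two elementary pointwise comparisons between weights, and finally to pass from $C_c^\infty(B_R)$ to $W_0^{2,2}(B_R)$ by density. No new analytic ingredient is required.

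For part (I) I would apply Theorem~\ref{T IR p=2}~(I) with $\alpha=4$ and $\gamma=N-4$. Since $N\ge5$ we have $\gamma>0$, and the hypotheses $4-N<\alpha$ and $\alpha\le N-\gamma$ both hold, the second with equality, so we land precisely in the borderline case $\gamma=N-\alpha$; then $\alpha-4=0$ and (\ref{IR gene p=2}) reads, for $u\in C_c^\infty(B_R)$,
\begin{align*}
\( \frac{N(N-4)}{4} \)^{2} \int_{B_R} \frac{|u|^2}{|x|^{4}\( 1-(|x|/R)^{N-4} \)^{2}}\,dx \le \int_{B_R} |\lap u|^2\,dx,
\end{align*}
which is the rightmost inequality. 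The leftmost one follows at once from $0\le 1-(|x|/R)^{N-4}\le 1$ on $B_R$, which forces $\( 1-(|x|/R)^{N-4} \)^{-2}\ge 1$ pointwise, hence $\int_{B_R}|u|^2|x|^{-4}\,dx\le\int_{B_R}|u|^2|x|^{-4}\( 1-(|x|/R)^{N-4} \)^{-2}\,dx$.

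For part (II) I would apply Theorem~\ref{T IR p=2}~(II) with $\alpha=4$ and $\gamma=1$. The hypotheses $3\le\alpha\le\min\{N-\gamma+2,\,N-3\gamma\}$ become $3\le4\le\min\{N+1,\,N-3\}$, i.e.\ $N\ge7$; hence (\ref{IR gene b p=2}) with $\alpha-4=0$ yields the rightmost inequality, the constant being $\( \frac34\gamma^2 \)^2=\( \frac34 \)^2$. For the left inequality I would use ${\rm dist}(x,\pd B_R)=R-|x|$ together with $|x|\le R$ to write
\begin{align*}
|x|^{4}\Big( 1-\tfrac{|x|}{R}\Big)^{4}=\frac{|x|^{4}}{R^{4}}\,(R-|x|)^{4}\le (R-|x|)^{4}={\rm dist}(x,\pd B_R)^{4},
\end{align*}
and then take reciprocals and integrate against $|u|^2$.

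The only point needing a little care — and the closest thing to an obstacle — is upgrading the chains from $C_c^\infty(B_R)$ to $W_0^{2,2}(B_R)$. Given $u\in W_0^{2,2}(B_R)$, choose $u_j\in C_c^\infty(B_R)$ with $u_j\to u$ in $W^{2,2}$; on a bounded domain $\|\lap\,\cdot\,\|_{L^2(B_R)}$ is an equivalent norm on $W_0^{2,2}(B_R)$, so $\lap u_j\to\lap u$ in $L^2$. Applying the already-established inequality to the differences $u_j-u_k$ shows that $(u_j)$ is Cauchy in the weighted spaces $L^2\big(B_R;\,|x|^{-4}(1-(|x|/R)^\gamma)^{-2\beta}\,dx\big)$ occurring on the left-hand sides (with $\beta=1$ in (I), $\beta=2$ in (II)); its limit there is $u$, so the middle integrals for $u$ are finite and the pointwise weight comparisons persist, while the right-hand sides converge to $\int_{B_R}|\lap u|^2\,dx$. (If one prefers, Fatou's lemma gives the inequalities in the limit directly.) This establishes both chains for all $u\in W_0^{2,2}(B_R)$ and completes the proof. \qed
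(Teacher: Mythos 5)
Your proof is correct and follows exactly the route the paper intends: specialize Theorem~\ref{T IR p=2}~(I) with $\alpha=4$, $\gamma=N-4$ and Theorem~\ref{T IR p=2}~(II) with $\alpha=4$, $\gamma=1$, then use the trivial pointwise comparisons $\(1-(|x|/R)^{N-4}\)^{-2}\ge 1$ and $|x|^4(1-|x|/R)^4\le(R-|x|)^4$. The density upgrade to $W_0^{2,2}(B_R)$ is an appropriate (if routine) addition the paper leaves implicit, and your remark that $\|\Delta\cdot\|_{L^2}$ is an equivalent norm on $W_0^{2,2}(B_R)$ is exactly the right justification.
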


Furthermore, we also obtain two critical Rellich inequalities as limiting forms of our inequalities (\ref{IR gene p=2}) and (\ref{IR gene b p=2}) as $\gamma \to 0$. For the critical Rellich inequalities (\ref{ICR p=2}), (\ref{ICR  b p=2}), see e.g. \cite{CM}. 

\begin{corollary}\label{C IR p=2}
(I) If $4-N < \alpha \le N$, then the inequality
\begin{align}\label{ICR p=2}
\( \frac{N+\alpha -4}{4} \)^2 \int_{B_R} \frac{|u|^2}{|x|^{\alpha} \( \log  \frac{R}{|x|}  \)^{2}}\le \int_{B_R} \frac{|\lap u|^2}{|x|^{\alpha -4} }\,dx
\end{align}
holds for any functions $u \in C_{c}^{\infty}(B_R)$.

\noindent
(II) If 3 $\le \alpha \le N$, then the inequality
\begin{align}
\label{ICR b p=2}
\( \frac{3}{4} \)^2 \int_{B_R} \frac{|u|^2}{|x|^{\alpha} \(\log  \frac{R}{|x|} \)^{4}} 
\le \int_{B_R} \frac{|\lap u|^2}{|x|^{\alpha -4} }\,dx
\end{align}
holds for any functions $u \in C_{c}^{\infty}(B_R)$.
\end{corollary}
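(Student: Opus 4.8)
The plan is to obtain Corollary \ref{C IR p=2} from Theorem \ref{T IR p=2} by letting $\gamma\to0^+$, exactly in the spirit of the Remark that produced the logarithmic inequality (\ref{IH log}) from (\ref{IH gene}). The elementary fact driving everything is that $\frac{1-r^\gamma}{\gamma}\to\log\frac1r$ as $\gamma\to0^+$ for each fixed $r\in(0,1)$, so that $\bigl(\frac{\gamma}{1-(|x|/R)^\gamma}\bigr)^2\to\bigl(\log\frac{R}{|x|}\bigr)^{-2}$ pointwise on $B_R\setminus\{0\}$, and likewise $\bigl(\frac{\gamma}{1-(|x|/R)^\gamma}\bigr)^4\to\bigl(\log\frac{R}{|x|}\bigr)^{-4}$. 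By the dilation $u\mapsto u(R\,\cdot)$ (both sides of each target inequality scale by the same power of $R$) we may assume $R=1$ throughout.

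For part (I): fix $\alpha\in(4-N,N)$ and, for each $\gamma\in(0,N-\alpha]$, rewrite the left-hand side of (\ref{IR gene p=2}) as $\bigl(\frac{N+\alpha-4}{4}\bigr)^2\int_{B_1}\bigl(\frac{\gamma}{1-|x|^\gamma}\bigr)^2\frac{|u|^2}{|x|^\alpha}\,dx$, the right-hand side being independent of $\gamma$. Since the integrand is nonnegative and converges pointwise to $\frac{|u|^2}{|x|^\alpha(\log(1/|x|))^2}$, Fatou's lemma (and $\frac{N+\alpha-4}{4}>0$) gives $\bigl(\frac{N+\alpha-4}{4}\bigr)^2\int_{B_1}\frac{|u|^2}{|x|^\alpha(\log(1/|x|))^2}\,dx\le\int_{B_1}\frac{|\lap u|^2}{|x|^{\alpha-4}}\,dx$, i.e. (\ref{ICR p=2}) for $\alpha<N$. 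To reach the endpoint $\alpha=N$, apply the inequality just obtained with $\alpha$ replaced by $\alpha'\uparrow N$: Fatou's lemma again handles the singular left-hand side (note $|x|^{-\alpha'}\to|x|^{-N}$ pointwise and the coefficient $\bigl(\frac{N+\alpha'-4}{4}\bigr)^2\to\bigl(\frac{N-2}{2}\bigr)^2$, which is the claimed constant at $\alpha=N$), while dominated convergence handles the right-hand side, with integrable majorant $|\lap u|^2\bigl(|x|^{4-N}\mathbf 1_{\{|x|\le1\}}+C\bigr)$ (integrable because $4-N>-N$ and $u$ is compactly supported); one records here that $\int_{B_1}\frac{|u|^2}{|x|^N(\log(1/|x|))^2}\,dx<\infty$, since near the origin the logarithmic factor supplies exactly the missing integrability (substitute $s=\log\frac1{|x|}$) and near $|x|=1$ there is nothing to check as $u$ vanishes there. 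Alternatively, $\alpha=N$ follows directly by applying (\ref{IR gene p=2}) with the admissible parameter $\alpha=N-\gamma$ and sending $\gamma\to0^+$.

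Part (II) is the verbatim analogue starting from (\ref{IR gene b p=2}): write its left-hand side as $\bigl(\frac34\bigr)^2\int_{B_1}\bigl(\frac{\gamma}{1-|x|^\gamma}\bigr)^4\frac{|u|^2}{|x|^\alpha}\,dx$, note that for every $\gamma>0$ one has $\min\{N-\gamma+2,\,N-3\gamma\}=N-3\gamma$, so the hypothesis of Theorem \ref{T IR p=2}(II) reads $3\le\alpha\le N-3\gamma$; thus for fixed $\alpha\in[3,N)$ the inequality applies for all small $\gamma$, and Fatou's lemma as $\gamma\to0^+$ yields (\ref{ICR b p=2}) for $\alpha<N$. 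The endpoint $\alpha=N$ is then handled exactly as in part (I), either by letting $\alpha'\uparrow N$ or by taking the admissible parameter $\alpha=N-3\gamma$ and $\gamma\to0^+$. The only genuinely delicate point is this borderline value $\alpha=N$, which is not directly covered by Theorem \ref{T IR p=2} (whose hypotheses force $\alpha\le N-\gamma$, resp. $\alpha\le N-3\gamma$, hence $\alpha<N$); I expect the bookkeeping there — selecting the correct one-sided convergence theorem in each integral (Fatou for the singular left-hand side, dominated convergence for the right-hand side) and verifying finiteness of the critical $\log$-weighted integral — to be the main, though routine, obstacle. No coercivity argument or new inequality is needed beyond Theorem \ref{T IR p=2}.
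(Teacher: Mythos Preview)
Your approach is correct and is exactly the one the paper intends: the paper does not give a separate proof of this corollary but simply declares that (\ref{ICR p=2}) and (\ref{ICR b p=2}) are obtained ``as limiting forms of our inequalities (\ref{IR gene p=2}) and (\ref{IR gene b p=2}) as $\gamma\to 0$'', in complete analogy with the Remark that derives (\ref{IH log}) from (\ref{IH gene}). Your write-up is in fact more careful than the paper's, supplying the Fatou/dominated-convergence bookkeeping and explicitly treating the endpoint $\alpha=N$ (which, as you note, lies just outside the hypotheses of Theorem~\ref{T IR p=2} for each fixed $\gamma>0$); the paper leaves all of this implicit.
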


%In the case where $p=2$, Emden-Fowler transformation is applicable for the classical Rellich inequality (\ref{R_p}), see e.g. \cite{CM}. However, it seems difficult to find a suitable Emden-Fowler transformation for our improved Rellich inequalities. 
%Therefore, 
We shall derive the improved Rellich inequalities (\ref{IR gene p=2}), (\ref{IR gene b p=2}) simply by integration by parts and the one dimensional inequalities in \S \ref{Appendix}.

%%%%%%%%%%%%%%%%%%%%%%%%%%%%%%%%%%%%%%

\begin{proof}(Proof of Theorem \ref{T IR p=2})
For the simplicity, we set $R=1$. We use the polar coordinate $x=r\w \,(r = |x|, \,\w \in \mathbb{S}^{N-1})$ and
\begin{align*}
\nabla u = \frac{\pd u}{\pd r} \w + \frac{1}{r} \nabla_{\mathbb{S}^{N-1}} u, \quad \lap u = \frac{\pd^2 u}{\pd r^2} + \frac{N-1}{r} \frac{\pd u}{\pd r} + \frac{\lap_{\s} u}{r^2}.
\end{align*} 

\noindent
(I) For $u \in C_c^{\infty} (B_1 \setminus \{ 0\})$, we have
\begin{align*}
\int_{B_1} \frac{|\,\lap u \,|^2}{|x|^{\alpha -4}} \,dx 
&= \int_0^1 \int_{\s} \left| \, \frac{\pd^2 u}{\pd r^2} + \frac{N-1}{r} \frac{\pd u}{\pd r} + \frac{\lap_{\s} u}{r^2} \, \right|^2 r^{N-\alpha +3} \, dr dS_{\w} \\
&= \iint \left| \, \frac{\pd^2 u}{\pd r^2} \, \right|^2 r^{N-\alpha +3} 
+ (N-1)(\alpha -3) \left| \, \frac{\pd u}{\pd r} \, \right|^2 r^{N-\alpha +1}
 + | \,\lap_{\s} u \,|^2 r^{N-\alpha -1}  \\
&- 2(N -1) \nabla_{\s} \( \frac{\pd u}{\pd r} \) \cdot (\nabla_{\s} u) \,r^{N-\alpha} -2 \nabla_{\s} \( \frac{\pd^2 u}{\pd r^2} \) \cdot (\nabla_{\s} u) \,r^{N-\alpha+1}.
\end{align*}
Since $\frac{\pd}{\pd r}$ and $\nabla_{\s}$ are commutative, we have
\begin{align*}
\int_{B_1} \frac{|\,\lap u \,|^2}{|x|^{\alpha -4}} \, dx 
&= \iint \left| \, \frac{\pd^2 u}{\pd r^2} \, \right|^2 r^{N-\alpha +3} 
+ (N-1)(\alpha -3) \left| \, \frac{\pd u}{\pd r} \, \right|^2 r^{N-\alpha +1}
 + | \,\lap_{\s} u \,|^2 r^{N-\alpha -1}  \\
&- (N -1) \frac{\pd}{\pd r} ( \,|\nabla_{\s} u |^2\,) \,r^{N-\alpha} -2 \frac{\pd}{\pd r} \(  \nabla_{\s} \( \frac{\pd u}{\pd r} \) \,\) \cdot (\nabla_{\s} u) \,r^{N-\alpha+1}\\
&= \iint \left| \, \frac{\pd^2 u}{\pd r^2} \, \right|^2 r^{N-\alpha +3} 
+ (N-1)(\alpha -3) \left| \, \frac{\pd u}{\pd r} \, \right|^2 r^{N-\alpha +1}
 + | \,\lap_{\s} u \,|^2 r^{N-\alpha -1}  \\
&+ (N -\alpha) (\alpha -2) \, | \,\nabla_{\s} u \, |^2 \,r^{N-\alpha -1} +2 \left| \,\frac{\pd}{\pd r} \( \, \nabla_{\s} u \, \) \, \right|^2 \,r^{N-\alpha+1}.
\end{align*}
If $4-N < \alpha$ and $N-\alpha  -\gamma \ge 0$, then we have the followings by Proposition \ref{Prop WHR} and Proposition \ref{Prop IWHR}.
\begin{align*}
\int_{B_1} \frac{|\,\lap u \,|^2}{|x|^{\alpha -4}} dx 
&\ge \iint \left\{ (N-1)(\alpha -3) + \( \frac{N-\alpha +2}{2} \)^2 \, \right\} \left| \, \frac{\pd u}{\pd r} \, \right|^2 r^{N-\alpha +1}
 + | \,\lap_{\s} u \,|^2 r^{N-\alpha -1}  \\
&+ \left\{  (N -\alpha) (\alpha -2) + 2 \( \frac{N-\alpha}{2} \)^2 \right\} \, | \,\nabla_{\s} u \, |^2 \,r^{N-\alpha -1} \\
&\ge \iint \( \frac{N + \alpha -4}{2} \)^2 \left| \, \frac{\pd u}{\pd r} \, \right|^2 r^{N-\alpha +1}
 %+ | \,\lap_{\s} u \,|^2 r^{N-\alpha -1}  
 + \frac{(N-\alpha)(N+\alpha -4)}{2} \, | \,\nabla_{\s} u \, |^2 \,r^{N-\alpha -1} \\
&\ge \( \frac{N + \alpha -4}{2} \)^2 \( \frac{\gamma}{2} \)^2 \iint  | u|^2 r^{N-\alpha -1} (1-r^\gamma )^{-2} \,drdS_\w \\
&= \( \frac{N + \alpha -4}{4} \gamma \)^2 \int_{B_1} \frac{|u |^2}{|x|^{\alpha} (1-|x|^\gamma )^2} \,dx 
\end{align*}
Therefore, we have the inequality (\ref{IR gene p=2}) for $u \in C_c^\infty (B_1 \setminus \{ 0\})$. By the assumption $\alpha \le N-\gamma$, we see that the inequality (\ref{IR gene p=2}) holds for $u \in C_c^\infty (B_1)$. In fact, we consider a smooth cut-off function $\phi_\ep \in C_c^\infty (B_1 \setminus \{ 0\})$ which satisfies $0 \le \phi_\ep \le 1, \phi_\ep =1$ on $B_1 \setminus B_{2\ep}, \phi_\ep =0$ on $B_{\ep}, |\nabla \phi_\ep | \le C \ep^{-1},$ and $|\lap \phi_\ep| \le C \ep^{-2}$. For $u \in C_c^\infty (B_1)$, set $u_\ep = u \phi_\ep \in C_c^\infty (B_1 \setminus \{ 0\})$. 
Thanks to the assumption $\alpha < N$, we have
\begin{align*}
\int_{B_1} \frac{|\,\lap u_\ep \,|^2}{|x|^{\alpha -4}} \, dx \to \int_{B_1} \frac{|\,\lap u \,|^2}{|x|^{\alpha -4}} \, dx, \,\,\int_{B_1} \frac{|u_\ep |^2}{|x|^{\alpha} (1-|x|^\gamma )^2} \,dx  \to \int_{B_1} \frac{|u |^2}{|x|^{\alpha} (1-|x|^\gamma )^2} \,dx,
\end{align*}
as $\ep \to 0$. Therefore, we obtain the inequality (\ref{IR gene p=2}) for $u \in C_c^\infty (B_1)$. 
Furthermore, the above calculation implies that if $U$ is an extremal function of the inequality (\ref{IR gene p=2}), then $U$ is a radially symmetric function since $|\nabla_{\s} U| =0$ a.e. in $B_1$. Also, we have $\gamma = N-\alpha$ and $U (x) = U(|x|) = C |x|^{-\frac{N- \alpha}{2}} \( 1 -|x|^{N-\alpha} \)^{\frac{1}{2}}$ a.e. in $B_1$ for some $C \in \re$ from the equality condition of Proposition \ref{Prop IWHR}. However, we have
\begin{align*}
\int_{B_1} \frac{|\,\lap U \,|^2}{|x|^{\alpha -4}} dx 
&=\( \frac{N + \alpha -4}{4} \gamma \)^2 \int_{B_1} \frac{|U |^2}{|x|^{\alpha} (1-|x|^\gamma )^2} \,dx \\
&= C^2 \( \frac{N + \alpha -4}{4} \gamma \)^2 \w_{N-1} \int_0^1 r^{-1 } (1-r^{N-\alpha} )^{-1} \,dr = \infty
\end{align*}
for any $\alpha$ and any $C \not=0$. The remaining of the proof is to show the optimality of the constant $\( \frac{N+\alpha -4}{4} \gamma \)^2$ in (\ref{IR gene p=2}). Since the test function and the calculations are completely the same as them in the proof of Theorem \ref{T IR} (I), we omit here.

\noindent
(II) For $u \in C_c^{\infty} (B_1 \setminus \{ 0\})$ and $\alpha \in [3, \min \{ N-\gamma +2, N-3\gamma \}]$, we have the followings from the above calculation and Proposition \ref{Prop IWHR}.
\begin{align*}
\int_{B_1} \frac{|\,\lap u \,|^2}{|x|^{\alpha -4}} \, dx 
&\ge \int_0^1 \int_{\s} \left| \, \frac{\pd^2 u}{\pd r^2} \, \right|^2 r^{N-\alpha +3}  + \frac{(N-\alpha)(N+\alpha -4)}{2} \, | \,\nabla_{\s} u \, |^2 \,r^{N-\alpha -1}\\
&\ge \( \frac{\gamma}{2} \)^2 \int_0^1 \int_{\s} \left| \, \frac{\pd u}{\pd r} \, \right|^2 r^{N-\alpha +1} (1-r^\gamma )^{-2} \\
&\ge \( \frac{3}{4} \gamma^2 \)^2 \int_0^1 \int_{\s} |u|^2 r^{N-\alpha -1} (1-r^\gamma )^{-4}  
=  \( \frac{3}{4} \gamma^2 \)^2 \int_{B_1} \frac{|u|^2}{|x|^{\alpha} (1-|x|^\gamma )^4} \, dx 
\end{align*}
In the same way as (I), we have the inequality (\ref{IR gene b p=2}) for $u \in C_c^{\infty} (B_1)$. Furthermore, the above calculation implies that if $U$ is an extremal function of the inequality (\ref{IR gene b p=2}), then $\alpha = 3$ and $U$ is a radially symmetric function since $|\nabla_{\s} U| =0$ a.e. in $B_1$. Also, we have $\gamma = \frac{N-3}{3}$ and $U (x) = U(|x|) = C |x|^{-\frac{N-3}{2}} \( 1 -|x|^{\frac{N-3}{3}} \)^{\frac{3}{2}}$ a.e. in $B_1$ for some $C \in \re$ from the equality condition of Proposition \ref{Prop IWHR}. However, we have
\begin{align*}
\int_{B_1} \frac{|\,\lap U \,|^2}{|x|^{\alpha -4}} dx 
&=\( \frac{3}{4} \gamma^2 \)^2 \int_{B_1} \frac{|U |^2}{|x|^{\alpha} (1-|x|^\gamma )^4} \,dx \\
&= C^2 \( \frac{3}{4} \gamma^2 \)^2 \w_{N-1} \int_0^1 r^{-1 } (1-r^{\frac{N-3}{3}} )^{-1} \,dr = \infty
\end{align*}
for any $C \not=0$. The remaining of the proof is to show the optimality of the constant $\( \frac{3}{4} \gamma^2 \)^2$ in (\ref{IR gene b p=2}).
Since the test function and the calculations are completely the same as them in the proof of Theorem \ref{T IR} (II), we omit here. 
\qed
\end{proof}

%%%%%%%%%%%%%%%%%%%%%%%%%%%%%%%%%%%%%%%

From Theorem \ref{T IR p=2} (I) and the classical Rellich type inequality (\ref{R_p}), we also obtain higher order case of the inequality (\ref{IR gene p=2}) as follows. 

\begin{corollary}\label{Cor IR k p=2}
Let $k \ge 3, k=2m+1$ or $k=2m, 1< p, \beta < \infty$, and $-2 + 4m < \alpha \le N -  \gamma$. 
Then the inequality
\begin{align}\label{IR gene k p=2}
\( \frac{\gamma}{N-\alpha} A_{k,2, \alpha} \)^2 \int_{B_R} \frac{|u|^2}{|x|^{\alpha} \( 1-\( \frac{|x|}{R} \)^\gamma \)^{2}}\le \int_{B_R} \frac{|\nabla^k u|^2}{|x|^{\alpha -2k} }\,dx
\end{align}
holds for any functions $u \in C_{c}^{k}(B_R)$. 
Furthermore, the constant $\( \frac{\gamma}{N-\alpha} A_{k,2, \alpha} \)^2$ in (\ref{IR gene k p=2}) is optimal and is not attained for $u \not\equiv 0$ for which the right-hand side is finite.
\end{corollary}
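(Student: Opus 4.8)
The plan is to chain the improved Rellich inequality (\ref{IR gene p=2}) of Theorem \ref{T IR p=2} (I) together with the classical Rellich inequality (\ref{R_p}), iterating the latter to strip off pairs of derivatives. Concretely, write $k = 2m$ or $k = 2m+1$. The idea is that $A_{k,2,\alpha}^2 \int_{B_R} |u|^2 |x|^{-\alpha}\,dx \le \int_{B_R} |\nabla^k u|^2 |x|^{-(\alpha-2k)}\,dx$ factors through the chain of exponents $\alpha, \alpha-4, \alpha-8, \dots$: each application of (\ref{R_p}) with weight exponent $\alpha - 2(k-2\ell)$ passes from $\nabla^{k-2\ell}$ to $\nabla^{k-2\ell-2}$ and contributes one factor of the form appearing in $A_{k,2,\alpha}$, the product of which is exactly $A_{k,2,\alpha}$ by the very definition of that constant. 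So first I would verify that
\begin{align*}
A_{k,2,\alpha}^2 \int_{B_R} \frac{|u|^2}{|x|^{\alpha}}\,dx \le \Big(\tfrac{N-\alpha}{2}\Big)^{2}\!\!\int_{B_R} \frac{|\lap^{\,?} u|^2}{|x|^{\alpha-2(k-2)}}\,dx \;\le\; \cdots
\end{align*}
reduces, after peeling off all but the last two derivatives via (\ref{R_p}), to bounding $(\tfrac{N-\alpha}{2})^2 (\text{rest of } A_{k,2,\alpha})^2 \int_{B_R}|u|^2|x|^{-\alpha}(1-(|x|/R)^\gamma)^{-2}$ by $\int_{B_R} |\lap u|^2 |x|^{-(\alpha-4)}$, which is precisely (\ref{IR gene p=2}) with the constant $\big(\tfrac{N+\alpha-4}{4}\gamma\big)^2$. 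Matching the bookkeeping, one checks that $A_{k,2,\alpha} = \tfrac{1}{\gamma}\cdot\gamma\cdot\big(\text{partial products}\big)\cdot\tfrac{N+\alpha-4}{4}\cdot(\cdots)$ collapses to $\tfrac{N-\alpha}{\gamma}\cdot\tfrac{\gamma}{N-\alpha}A_{k,2,\alpha}$, giving the stated constant $\big(\tfrac{\gamma}{N-\alpha}A_{k,2,\alpha}\big)^2$. The mild technical point is the admissibility of each intermediate inequality: each use of (\ref{R_p}) at stage $\ell$ requires its weight exponent to lie in $(2+2(m'-1)\cdot 2, N)$ for the appropriate lower-order operator; this is exactly what the hypothesis $-2+4m < \alpha \le N-\gamma$ (together with $\gamma>0$, so $\alpha < N$) guarantees down the whole chain.

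Next I would address optimality of the constant. Here I would reuse the test family from the proof of Theorem \ref{T IR p=2}, namely radial functions concentrating near $\pd B_R$ of the form $f_A(x) = \phi_\delta(x)(1-(|x|/R)^\gamma)^A$ with $A \to \tfrac{1}{2}^+$, or rather a $k$-th order analogue. The point is that for such boundary-concentrating functions all derivatives are dominated by the radial derivative acting on the singular factor $(1-(|x|/R)^\gamma)^A$, so that iterating $\lap$ simply multiplies by the classical Rellich-type constants evaluated at the shifted exponents, and the ratio $\int |\nabla^k f_A|^2 |x|^{-(\alpha-2k)} \big/ \int |f_A|^2 |x|^{-\alpha}(1-(|x|/R)^\gamma)^{-2}$ tends to $\big(\tfrac{\gamma}{N-\alpha}A_{k,2,\alpha}\big)^2$ as $A \to \tfrac12$ and $\delta \to 0$. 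Combined with the inequality already proved, this forces the constant to be sharp.

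For non-attainability I would argue that equality in the chain forces equality in every link. Equality in the final link (\ref{IR gene p=2}) forces, by the analysis in the proof of Theorem \ref{T IR p=2} (I), both $\gamma = N-\alpha$ and the radial profile $U(|x|) = C|x|^{-(N-\alpha)/2}(1-|x|^{N-\alpha})^{1/2}$ (after normalizing $R=1$), while equality in each classical Rellich link forces $U$ to be, up to constants, the corresponding virtual extremal $|x|^{-(\text{something})}$ of (\ref{R_p}); these are incompatible unless $C=0$, or — running the argument directly — one simply shows that the putative extremal $U$ makes the right-hand side of (\ref{IR gene k p=2}) diverge, exactly as in the displayed computation $\int_0^1 r^{-1}(1-r^{N-\alpha})^{-1}\,dr = \infty$ in the proof of Theorem \ref{T IR p=2} (I), since applying $\lap^{m}$ (or $\nabla\lap^{m}$) to $U$ only improves the decay in $r$ by integer powers and leaves the non-integrable $(1-r^{N-\alpha})^{-1}$ behaviour near $r=1$ intact. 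Hence the constant is not attained for any nontrivial $u$ with finite right-hand side.

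The main obstacle I anticipate is the constant bookkeeping: one must check that the telescoping product of the classical Rellich constants $A_{j,2,\beta_j}$ over the intermediate weight exponents $\beta_j$ precisely reproduces $\tfrac{\gamma}{N-\alpha}A_{k,2,\alpha}$ rather than some off-by-a-factor quantity, and in particular that the parity split ($k=2m$ versus $k=2m+1$, the latter carrying the extra $\tfrac{N-\alpha+2mp}{p}$ factor) is handled correctly — the odd case uses one application of the first-order improved Hardy inequality (Theorem \ref{T IH}) in place of the last Rellich step, or absorbs the extra gradient into the $\nabla\lap^m$ structure. A secondary nuisance is the density/cutoff argument passing from $C_c^k(B_R \setminus\{0\})$ to $C_c^k(B_R)$, which needs $\alpha < N$ together with enough room for the $|x|^{-2}$, $|x|^{-4}$ weights against $r^{N-1}$; this is routine but must be stated.
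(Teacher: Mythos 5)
Your argument for the inequality \eqref{IR gene k p=2} itself is correct and matches the paper's: the paper applies Theorem \ref{T IR p=2}(I) once to reach $\int_{B_R} |\lap u|^2 |x|^{-(\alpha-4)}\,dx$ and then applies the $(k-2)$-th order classical Rellich inequality \eqref{R_p} with shifted weight exponent $\tilde{\alpha}=\alpha-4$, using the constant identity $A_{k-2,2,\alpha-4}\cdot\frac{(N-\alpha)(N+\alpha-4)}{4}=A_{k,2,\alpha}$. Your ``iterating to strip off pairs of derivatives'' is just unpacking a single application of \eqref{R_p} into its inductive steps, which is the same thing in substance. The admissibility/bookkeeping remarks are fine. (The paper's proof of this Corollary stops here; it does not present proofs of the optimality or non-attainability claims.)

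Your optimality argument, however, contains a genuine error. You propose boundary-concentrating test functions $f_A(x)=\phi_\delta(x)\bigl(1-(|x|/R)^\gamma\bigr)^A$ with $A\to\tfrac12^+$. This cannot work for \eqref{IR gene k p=2} because the right-hand side has no $\bigl(1-(|x|/R)^\gamma\bigr)$-weight at all. Near $r=R$ one has $\lap f_A\sim (f_A)_{rr}\sim A(A-1)\gamma^2(1-r^\gamma)^{A-2}$ (the $\frac{N-1}{r}(f_A)_r$ term is subdominant by one power of $(1-r^\gamma)$), hence $\nabla^k f_A\sim (1-r^\gamma)^{A-k}$. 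Then $\int_{B_R}|\nabla^k f_A|^2 |x|^{-(\alpha-2k)}\sim\int(1-r^\gamma)^{2A-2k}\,dr$ diverges to $\infty$ strictly faster than the left-hand denominator $\int(1-r^\gamma)^{2A-2}\,dr$ as $A\to\tfrac12$ (for $k\ge2$), so the Rayleigh quotient blows up instead of tending to a finite constant; and even formally the prefactor would encode only $\gamma$ and $A$, not the product defining $A_{k,2,\alpha}$, which lives at the origin. This also contradicts the paper's own mechanism: in the proof of Theorem \ref{T IR p=2}(I) the optimality test functions are \emph{origin}-concentrating, $g_B(x)=\psi_\delta(x)|x|^{-B}$ with $B\to\frac{N-\alpha}{2}$ (the paper writes ``the test function and the calculations are completely the same as them in the proof of Theorem \ref{T IR} (I)''), and there optimality is asserted precisely for $\gamma=N-\alpha$. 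Indeed, taking the origin test function directly in \eqref{IR gene k p=2} yields a Rayleigh quotient tending to $A_{k,2,\alpha}^2$, which strictly exceeds $\bigl(\tfrac{\gamma}{N-\alpha}A_{k,2,\alpha}\bigr)^2$ when $\gamma<N-\alpha$, so optimality can only hold at $\gamma=N-\alpha$; a correct argument would use $g_B$ at that endpoint. Your non-attainability sketch (divergence of the right-hand side at the putative extremal profile from Theorem \ref{T IR p=2}(I)) is on the right track and consistent with the paper's first-order argument, but it is only self-consistent at $\gamma=N-\alpha$ where equality in Proposition \ref{Prop IWHR} can occur.
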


As a limiting form of (\ref{IR gene k p=2}) as $\gamma \to 0$, we obtain the critical Rellich inequality for $-2 + 4m < \alpha \le N$.
\begin{align*}
\( \frac{A_{k,2, \alpha}}{N-\alpha}  \)^2 \int_{B_R} \frac{|u|^2}{|x|^{\alpha} \( \log \frac{R}{|x|} \)^{2}} 
\le \int_{B_R} \frac{|\nabla^k u|^2}{|x|^{\alpha -2k}}\,dx.
\end{align*}

\begin{proof}(Proof of Corollary \ref{Cor IR k p=2})
Applying the Rellich type inequality (\ref{R_p}) with $\tilde{\alpha} =\alpha -4$ and $\tilde{k} = k-2$, we have
\begin{align}\label{subR_p}
A_{\tilde{k}, 2, \tilde{\alpha}}^2 \int_{B_R} \frac{|\lap u|^2}{|x|^{\tilde{\alpha}}}\,dx \le \int_{B_R} \frac{|\nabla^{\tilde{k}} (\lap u )|^2}{|x|^{\tilde{\alpha}-2\tilde{k}}}\,dx
= \int_{B_R} \frac{|\nabla^k u|^2}{|x|^{\alpha -2k} }\,dx.
\end{align}
On the other hand, from Theorem \ref{T IR p=2} (I), we have
\begin{align}\label{Thm apply}
\( \frac{N+\alpha -4}{4} \gamma \)^2 \int_{B_R} \frac{|u|^2}{|x|^{\alpha} \( 1-\( \frac{|x|}{R} \)^\gamma \)^{2}}\le \int_{B_R} \frac{|\lap u|^2}{|x|^{\alpha -4} }\,dx.
\end{align}
Note that
\begin{align}\label{A to A}
A_{\tilde{k}, 2, \tilde{\alpha}} \frac{(N-\alpha)(N+\alpha -4)}{4} = A_{k,2,\alpha}. 
\end{align}
From (\ref{subR_p}), (\ref{Thm apply}), and (\ref{A to A}), 
we obtain the inequality (\ref{IR gene k p=2}). 
\qed
\end{proof}

%%%%%%%%%%%%%%%%%%%%%%%%%%%%%%%%%%%%%%%

\subsection{Improved Rellich inequalities for radially symmetric functions on $L^p$}\label{S IR rad}

In this subsection, we treat the case where $p \not=2$ for radially symmetric functions.

\begin{theorem}\label{T IR}
Let $1 < p, \beta < \infty, \gamma > 0$, and $k \ge 2$.  

\noindent
(I) If $\alpha \le N- (p-1)\gamma$, then the inequality
\begin{align}
\label{IR gene}
\( \frac{p-1}{N-\alpha} \gamma \,A_{k,p,\alpha} \)^p \int_{B_R} \frac{|u|^p}{|x|^{\alpha} \( 1-\( \frac{|x|}{R} \)^\gamma \)^{p}}\le \int_{B_R} \frac{|\nabla^k u|^p}{|x|^{\alpha -kp} }\,dx
\end{align}
holds for any radially symmetric functions $u \in C_{c,{\rm rad}}^{k}(B_R)$.
Especially, when $\gamma = \frac{N-\alpha}{p-1}$, the constant $A_{k,p,\alpha}^p$ in (\ref{IR gene}) is optimal and is not attained for $u \not\equiv 0$ for which the right-hand side is finite.

\noindent
(II) If $\alpha \le \min \{ N -(\beta -p -1)\gamma - (N-2) p, \, N - (\beta -1)\gamma \}$, then the inequality
\begin{align}\label{IR gene b}
\( \prod_{j=0}^{k-1} \frac{\beta -jp -1}{p} \gamma \)^p \int_{B_R} \frac{|u|^p}{|x|^{\alpha} \( 1-\( \frac{|x|}{R} \)^\gamma \)^{\beta}} 
\le \int_{B_R} \frac{|\nabla^k u|^p}{|x|^{\alpha -kp} \( 1-\( \frac{|x|}{R} \)^\gamma \)^{\beta -kp}}\,dx
\end{align}
holds for any radially symmetric functions $u \in C_{c,{\rm rad}}^{k}(B_R)$. Furthermore, the constant $\( \prod_{j=0}^{k-1} \frac{\beta -jp -1}{p} \gamma \)^p$ in (\ref{IR gene b}) is optimal and is not attained for $u \not\equiv 0$ for which the right-hand side is finite.
\end{theorem}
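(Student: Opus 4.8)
The plan is to reduce both inequalities to one dimension and then peel off derivatives one at a time, feeding each step into the first-order inequalities of Section~\ref{S IH} and, for part~(I), into the classical Rellich inequality~(\ref{R_p}). Since $u$ is radial, $\nabla^{2m}u=\lap^m u$ and $\nabla^{2m+1}u=\nabla\lap^m u$ are radial as well, and in polar coordinates the radial part of $\nabla^k$ is an alternating composition of the ``odd step'' $g\mapsto g'$ and the ``even step'' $g\mapsto r^{1-N}(r^{N-1}g)'$ (the latter being $\lap$ acting on a radial function). Writing every integral as $\w_{N-1}\int_0^R(\,\cdot\,)\,r^{N-1}\,dr$, both assertions thus reduce to a chain of one-dimensional weighted Hardy-type estimates of the kind collected in \S\ref{Appendix}.

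For part~(II) I would argue by induction on $k$. The base case $k=1$ is exactly Theorem~\ref{T IH} for radial functions, where $\nabla u\cdot\frac{x}{|x|}=u'$; its hypothesis $\alpha\le N-(\beta-1)\gamma$ is the second entry of the stated minimum. For the inductive step, set $g=\nabla^{k-1}u$ (radial, compactly supported). If the outermost peeled operation is the odd step, I apply the one-dimensional improved Hardy inequality of \S\ref{Appendix} directly to $g$; if it is the even step, I apply the same inequality to $h=r^{N-1}g$, which merely shifts the power weights. In both cases the gained constant is $\bigl(\tfrac{\beta-(k-1)p-1}{p}\,\gamma\bigr)^p$ and the remaining integral is precisely $\int_{B_R}|\nabla^{k-1}u|^p\,|x|^{-\alpha+(k-1)p}\bigl(1-(|x|/R)^\gamma\bigr)^{-\beta+(k-1)p}\,dx$, so the inductive hypothesis closes the step and the constants telescope to $\bigl(\prod_{j=0}^{k-1}\tfrac{\beta-jp-1}{p}\gamma\bigr)^p$. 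The admissibility condition for the even step at stage $k$ works out to $\alpha\le N-(N-k)p-(\beta-(k-1)p-1)\gamma$, which is strictest at $k=2$ and yields the first entry of the stated minimum, while the odd step is strictest at $k=1$ and yields the second entry; when needed one also uses the usual cutoff near $x=0$ (as in the proof of Theorem~\ref{T IR p=2}) to pass from $C_{c,{\rm rad}}^k(B_R\setminus\{0\})$ to $C_{c,{\rm rad}}^k(B_R)$.

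For part~(I) the bubble weight is present only on the left (with exponent $p$), so it is cleaner to treat $k=2$ directly and then bootstrap. For $k=2$ and radial $u$ one has $\int_{B_R}|\lap u|^p|x|^{-\alpha+2p}\,dx=\w_{N-1}\int_0^R|r^{1-N}(r^{N-1}u')'|^p\,r^{N-1-\alpha+2p}\,dr$; the substitution $\psi=r^{N-1}u'$ together with a plain one-dimensional weighted $L^p$ Hardy inequality (\S\ref{Appendix}) bounds this below by $\bigl(\tfrac{N(p-1)+\alpha-2p}{p}\bigr)^p\int_0^R|u'|^p\,r^{N-1-\alpha+p}\,dr$, and Theorem~\ref{T IH} with $\beta=p$ (cf. Corollary~\ref{C IH first}(I)) then contributes a further factor $\bigl(\tfrac{p-1}{p}\gamma\bigr)^p$; since $\tfrac{N(p-1)+\alpha-2p}{p}\cdot\tfrac{p-1}{p}\gamma=\tfrac{p-1}{N-\alpha}\gamma\,A_{2,p,\alpha}$ this is the claimed constant. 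For $k\ge3$ I would mimic Corollary~\ref{Cor IR k p=2}: apply the classical Rellich inequality~(\ref{R_p}) to $\lap u$ with order $k-2$ and weight exponent $\alpha-2p$ (legitimate since $\nabla^k u=\nabla^{k-2}(\lap u)$), then invoke the $k=2$ case, using the multiplicativity $A_{k-2,p,\alpha-2p}\,A_{2,p,\alpha}=A_{k,p,\alpha}$, which is immediate from the product defining $A_{k,p,\alpha}$ after an index shift.

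Optimality and non-attainability I would settle exactly as in the first-order and $p=2$ cases already treated: sharpness via the near-boundary test functions $f_A(x)=\phi_\delta(x)\bigl(1-(|x|/R)^\gamma\bigr)^A$ for part~(II) (and their analogues for part~(I) at the distinguished $\gamma$ making the constant $A_{k,p,\alpha}^p$), letting $A$ tend to the critical exponent and $\delta\to0$; non-attainability by observing that equality would force, through the equality cases of the one-dimensional inequalities (as in Remark~\ref{R virtual extremal}), a would-be extremal built from powers of $|x|$ and of $|x|^{-\gamma}-1$ for which the right-hand side diverges, via $\int_{B_R\setminus B_{R-\ep}}\bigl(1-(|x|/R)^\gamma\bigr)^{-1}\,dx=\infty$ (equivalently, by carrying nonnegative remainder terms at each peeling step in the spirit of Theorem~\ref{T IH remainder}). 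The main obstacle I anticipate is essentially bookkeeping: checking that the admissibility thresholds of all the one-dimensional steps are simultaneously implied by the single hypothesis on $\alpha$ stated in the theorem, and verifying that the substitution $h=r^{N-1}g$ at each even step produces a function with the correct vanishing at $r=0$ so that the one-dimensional inequality applies with no boundary term.
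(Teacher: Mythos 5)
Your proposal follows essentially the same route as the paper: for part~(I), the $k=2$ case is obtained by composing a Hardy--Rellich step (equivalently (\ref{App HR 1 to 2})) with the first-order improved Hardy inequality at $\beta=p$, then bootstrapped via the radial Rellich inequality with the multiplicativity $A_{2,p,\alpha}A_{k-2,p,\alpha-2p}=A_{k,p,\alpha}$; for part~(II), the induction on $k$ peeling off one gradient/Laplacian step at a time using the first-order estimates is exactly the paper's argument (Theorem~\ref{T IH remainder} plus Corollary~\ref{Cor IHR 1 to 2}), and your bookkeeping of the admissibility thresholds (strictest at $k=2$ for the even step, at $k=1$ for the odd step) correctly reproduces the stated minimum. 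The one correction worth making: in the bootstrap for part~(I) you should invoke the radial Rellich inequality of Corollary~\ref{Cor H 0 to k}, which holds for all $\alpha<N$, rather than the general inequality~(\ref{R_p}), whose hypothesis $\alpha>2+2(m-1)p$ would impose an unwanted lower bound on $\alpha$ that the theorem does not assume; the paper uses Corollary~\ref{Cor H 0 to k} precisely to avoid this restriction for radial functions.
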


\begin{remark}
Note that the function $\frac{\gamma}{1-r^\gamma}$ is monotone-increasing with respect to $\gamma \in (0, \frac{N-\alpha}{p-1}]$ for any $r \in (0,1)$. Therefore, in the inequality (\ref{IR gene}), we see that 
\begin{align*}
\( \frac{p-1}{N-\alpha} \gamma \,A_{k,p,\alpha} \)^p \int_{B_R} \frac{|u|^p}{|x|^{\alpha} \( 1-\( \frac{|x|}{R} \)^\gamma \)^{p}} \le A_{k,p,\alpha}^p \int_{B_R} \frac{|u|^p}{|x|^{\alpha} \( 1-\( \frac{|x|}{R} \)^{\frac{N-\alpha}{p-1}} \)^{p}} \le \int_{B_R} \frac{|\nabla^k u|^p}{|x|^{\alpha -kp} }\,dx.
\end{align*}
\end{remark}

\begin{remark}
Unfortunately, we cannnot derive the geometric Rellich inequality (\ref{R_p geo}) from our inequality (\ref{IR gene b}) due to the assumption $\alpha \le N -(\beta -p -1)\gamma - (N-2) p$. In fact, if we assume $(\alpha, \beta, \gamma) = (kp, kp, 1)$, then we have $p=1$ from the assumption. However, the case where $p =1$ is excluded in Theorem \ref{T IR}. See also the end of this section. 
\end{remark}

We easily see that our inequality (\ref{IR gene}) with $\gamma = \frac{N-\alpha}{p-1}$ gives an improvement of the classical Rellich type inequality (\ref{R_p}) on $L^p$ for radially symmetric functions. 
We also obtain the critical Rellich inequality (\ref{critical Rellich}) on $L^p$ for radially symmetric functions as a limiting form of our inequality (\ref{IR gene}) as $\gamma \to 0$. For the critical Rellich inequality (\ref{critical Rellich}) on $L^p$, see \cite{DHA,N,RS}. 

\begin{corollary}\label{C IR log}
Let $1 < p, \beta < \infty$ and $k \in \N$.
If $\alpha \le N$, then the inequality
\begin{align}\label{critical Rellich}
\( \frac{p-1}{N-\alpha}  \,A_{k,p,\alpha} \)^p \int_{B_R} \frac{|u|^p}{|x|^{\alpha} \( \log \frac{R}{|x|} \)^{p}}\le \int_{B_R} \frac{|\nabla^k u|^p}{|x|^{\alpha -kp} }\,dx
\end{align}
holds for any radially symmetric functions $u \in C_{c,{\rm rad}}^{k}(B_R)$.
\end{corollary}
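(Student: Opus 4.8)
The plan is to obtain (\ref{critical Rellich}) as the degenerate limit $\gamma\to0^{+}$ of the improved Rellich inequality (\ref{IR gene}) of Theorem~\ref{T IR}(I), just as (\ref{IH log}) was extracted from (\ref{IH gene}). First I would reduce to $R=1$: under $u\mapsto v:=u(R\,\cdot)$ on $B_{1}$ one has $|\nabla^{k}v(y)|=R^{k}|\nabla^{k}u(Ry)|$, and a change of variables multiplies both sides of (\ref{critical Rellich}) by the same factor $R^{\alpha-N}$. Next, fix $k\ge2$ and first suppose $\alpha<N$. For any $\gamma\in(0,\tfrac{N-\alpha}{p-1}]$ the inequality (\ref{IR gene}) on $B_{1}$ reads
\begin{align*}
\Bigl(\tfrac{p-1}{N-\alpha}A_{k,p,\alpha}\Bigr)^{p}\int_{B_{1}}\frac{\gamma^{p}|u|^{p}}{|x|^{\alpha}(1-|x|^{\gamma})^{p}}\,dx\le\int_{B_{1}}\frac{|\nabla^{k}u|^{p}}{|x|^{\alpha-kp}}\,dx,
\end{align*}
whose right-hand side is finite (since $\alpha-kp<N$) and independent of $\gamma$. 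From $1-|x|^{\gamma}=\gamma\log\tfrac{1}{|x|}+o(\gamma)$ as $\gamma\to0$ we get $\tfrac{\gamma}{1-|x|^{\gamma}}\to\bigl(\log\tfrac{1}{|x|}\bigr)^{-1}$ pointwise on $B_{1}\setminus\{0\}$, so Fatou's lemma applied to the (nonnegative) left-hand integrand gives
\begin{align*}
\Bigl(\tfrac{p-1}{N-\alpha}A_{k,p,\alpha}\Bigr)^{p}\int_{B_{1}}\frac{|u|^{p}}{|x|^{\alpha}\bigl(\log\tfrac{1}{|x|}\bigr)^{p}}\,dx\le\int_{B_{1}}\frac{|\nabla^{k}u|^{p}}{|x|^{\alpha-kp}}\,dx,
\end{align*}
which is (\ref{critical Rellich}) for $\alpha<N$. (Since the remark after Theorem~\ref{T IR} notes that $\gamma\mapsto\tfrac{\gamma}{1-r^{\gamma}}$ is increasing, one could instead use dominated convergence with the $\gamma=\tfrac{N-\alpha}{p-1}$ integrand; Fatou needs no domination.)

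For the borderline exponent $\alpha=N$ there is no admissible $\gamma>0$, so instead I would let $\alpha\uparrow N$ in the inequality just proved. The factor $N-\alpha$ occurring in $A_{k,p,\alpha}$ (the $j=0$ term of its product, present when $k\ge2$) cancels the $\tfrac{1}{N-\alpha}$, so $\tfrac{p-1}{N-\alpha}A_{k,p,\alpha}$ extends continuously to $\alpha=N$, and this is the value meant in (\ref{critical Rellich}). On $B_{1}$ the left-hand integrand $\tfrac{|u|^{p}}{|x|^{\alpha}(\log\frac{1}{|x|})^{p}}$ is monotone increasing in $\alpha$, so monotone convergence yields the critical weight, while $\int_{B_{1}}|x|^{kp-\alpha}|\nabla^{k}u|^{p}\,dx\to\int_{B_{1}}|x|^{kp-N}|\nabla^{k}u|^{p}\,dx$ by dominated convergence (a dominating function is $\max\{|x|^{kp-N},|x|^{kp-\alpha_{0}}\}$ for a fixed $\alpha_{0}<N$, integrable because $kp>0$); together these give (\ref{critical Rellich}) at $\alpha=N$. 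Finally, Theorem~\ref{T IR}(I) assumes $k\ge2$ while Corollary~\ref{C IR log} allows $k\in\N$; the case $k=1$ has $A_{1,p,\alpha}=\tfrac{N-\alpha}{p}$, so the asserted constant is $(\tfrac{p-1}{p})^{p}$ and the inequality is exactly the radial, $\beta=p$ instance of (\ref{IH log}), already available.

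I do not expect any genuine analytic difficulty: all the strength is in Theorem~\ref{T IR}(I), and the remaining work is only to justify interchanging limits with integrals. The one point requiring care is the endpoint $\alpha=N$, which is not literally a $\gamma\to0$ limit and must be reached through the secondary limit $\alpha\uparrow N$ together with the removable singularity of the constant there; this is the main obstacle.
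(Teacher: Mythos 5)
Your proof is correct and follows essentially the same route the paper has in mind: the text immediately before Corollary~\ref{C IR log} declares that (\ref{critical Rellich}) is obtained from (\ref{IR gene}) by letting $\gamma\to0$, and the paper offers no further details. You carry out exactly this limit, and the technical scaffolding you add (Fatou on the $\gamma$-uniform left-hand side, the secondary limit $\alpha\uparrow N$ for the endpoint that is not reachable by any admissible $\gamma>0$, the observation that the factor $N-\alpha$ in the $j=0$ term of $A_{k,p,\alpha}$ makes the constant extend continuously, and the separate reduction of the $k=1$ case to the radial $\beta=p$ instance of (\ref{IH log})) is all sound and indeed necessary if one wants a complete argument rather than the paper's one-line assertion.
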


%%%%%%%%%%%%%%%%%%%%%%%%%%%%%%%%%%%%%%%%%%%%%%%%

To prove Theorem \ref{T IR}, we show the improved Rellich type inequality with a remainder term as follows. The remainder term comes from Theorem \ref{T IH remainder}. 

\begin{theorem}\label{T IR gene remainder}
Let $1 < p, \beta < \infty, \gamma > 0$, and $k \ge 2$.

\noindent
(I) If $\alpha \le N- (p-1)\gamma$, then the inequality
\begin{align}
\label{IR gene remainder}
\( \frac{p-1}{N-\alpha} \gamma \,A_{k,p,\alpha} \)^p \int_{B_R} \frac{|u|^p}{|x|^{\alpha} \( 1-\( \frac{|x|}{R} \)^\gamma \)^{p}}
+ \( \frac{p}{N-\alpha} A_{k,p,\alpha} \)^p \,\psi_{N,p,\alpha, p} (u)
\le \int_{B_R} \frac{|\nabla^k u|^p}{|x|^{\alpha -kp} }\,dx
\end{align}
holds for any radially symmetric functions $u \in C_{c, {\rm rad}}^{k}(B_R)$, where $\psi_{N,p,\alpha,\beta}(u)$ is given in Theorem \ref{T IH remainder}.

\noindent
(II) If $\alpha \le \min \{ N -(\beta -p -1)\gamma - (N-2) p, \, N - (\beta -1)\gamma \}$, then the inequality
\begin{align}\label{IR gene b remainder}
\( \prod_{j=0}^{k-1} \frac{\beta -jp -1}{p} \gamma \)^p \int_{B_R} \frac{|u|^p}{|x|^{\alpha} \( 1-\( \frac{|x|}{R} \)^\gamma \)^{\beta}} 
&+ \( \prod_{j=1}^{k-1} \frac{\beta -jp -1}{p} \gamma \)^p \psi_{N,p,\alpha, \beta} (u) \notag \\
&\le \int_{B_R} \frac{|\nabla^k u|^p}{|x|^{\alpha -kp} \( 1-\( \frac{|x|}{R} \)^\gamma \)^{\beta -kp}}\,dx
\end{align}
holds for any radially symmetric functions $u \in C_{c,{\rm rad}}^{k}(B_R)$, where $\psi_{N,p,\alpha,\beta}(u)$ is given in Theorem \ref{T IH remainder}.
\end{theorem}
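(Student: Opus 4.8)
The plan is to work in polar coordinates $x=r\w$ ($r=|x|$, $\w\in\s$) and exploit that, for radial $u$, one has $\nabla u=u'\frac{x}{|x|}$, $\lap u=u''+\frac{N-1}{r}u'=r^{1-N}\bigl(r^{N-1}u'\bigr)'$, and $\nabla^k u$ is obtained by applying the radial polyharmonic operator to the radial profile of $u$; hence both sides of (\ref{IR gene remainder}) and (\ref{IR gene b remainder}) reduce, up to the factor $\w_{N-1}$, to weighted one-dimensional integrals on $(0,R)$. The heart of the argument is a \emph{descent} that brings the order of differentiation down to $\nabla u\cdot\frac{x}{|x|}=u'$ while accumulating a product of constants; only the final step — from $u'$ to $u$ — invokes Theorem \ref{T IH remainder}, which is the sole source of the correction $\psi_{N,p,\alpha,\beta}(u)$. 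From now on put $R=1$.

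For part (I), since $\beta=p$ the weight $(1-r^\gamma)^{\beta-p}$ on the right-hand side of (\ref{IH remainder}) equals $1$, so no $(1-r^\gamma)$-factor has to be carried along the first part of the descent. First apply the classical Rellich inequality (\ref{R_p}), of order $k-2$ and with weight $\alpha-2p$, to $\lap u$: this gives $\int_{B_1}|\nabla^k u|^p|x|^{kp-\alpha}\,dx\ge A_{k-2,p,\alpha-2p}^{\,p}\int_{B_1}|\lap u|^p|x|^{2p-\alpha}\,dx$ (for $k=2$ this step is vacuous, $A_{0,p,\cdot}$ being an empty product). Next, for radial $u$, write $\lap u=r^{1-N}(r^{N-1}u')'$ and substitute $z=r^{N-1}u'$, which vanishes at $r=0$ by smoothness of the radial profile and at $r=1$ by compact support; the resulting one-dimensional Hardy inequality yields $\int_{B_1}|\lap u|^p|x|^{2p-\alpha}\,dx\ge\bigl(\tfrac{p}{N-\alpha}A_{2,p,\alpha}\bigr)^p\int_{B_1}|u'|^p|x|^{p-\alpha}\,dx$. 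Finally Theorem \ref{T IH remainder} with $\beta=p$ gives $\int_{B_1}|u'|^p|x|^{p-\alpha}\,dx\ge\bigl(\tfrac{p-1}{p}\gamma\bigr)^p\int_{B_1}|u|^p|x|^{-\alpha}(1-r^\gamma)^{-p}\,dx+\psi_{N,p,\alpha,p}(u)$. Chaining the three estimates and using the elementary identities $A_{k,p,\alpha}=A_{2,p,\alpha}\,A_{k-2,p,\alpha-2p}$ and $\tfrac{p}{N-\alpha}A_{k,p,\alpha}\cdot\tfrac{p-1}{p}\gamma=\tfrac{p-1}{N-\alpha}\gamma A_{k,p,\alpha}$ produces exactly (\ref{IR gene remainder}).

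For part (II) the weight $(1-r^\gamma)^{-(\beta-kp)}$ must be propagated through the whole descent, so (\ref{R_p}) is unavailable and the descent proceeds one order at a time. At an even intermediate level, $\nabla^{2l}u=\lap^l u$ is a radial scalar function and $|\nabla^{2l+1}u|=|(\lap^l u)'|$, so the step from order $2l+1$ down to $2l$ is a direct application of the improved Hardy inequality (\ref{IH gene}) to $\lap^l u$, with parameters shifted to $\alpha-2lp$, $\beta-2lp$; it contributes the factor $\tfrac{\beta-2lp-1}{p}\gamma$. At an odd intermediate level one has $\nabla^{2l+1}u=(\lap^l u)'$ and $\nabla^{2l+2}u=(\nabla^{2l+1}u)'+\tfrac{N-1}{r}\nabla^{2l+1}u=r^{1-N}\bigl(r^{N-1}\nabla^{2l+1}u\bigr)'$, so with $z=r^{N-1}\nabla^{2l+1}u$ the step from order $2l+2$ down to $2l+1$ reduces to a one-dimensional Hardy inequality with a $(1-r^\gamma)$-improvement of the type recorded in Section \ref{Appendix} (cf.\ Propositions \ref{Prop WHR}, \ref{Prop IWHR}), and contributes $\tfrac{\beta-(2l+1)p-1}{p}\gamma$. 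Running the descent over the target orders $j=k-1,\dots,1$ gives $\int_{B_1}|\nabla^k u|^p|x|^{kp-\alpha}(1-r^\gamma)^{kp-\beta}\,dx\ge\bigl(\prod_{j=1}^{k-1}\tfrac{\beta-jp-1}{p}\gamma\bigr)^p\int_{B_1}|u'|^p|x|^{p-\alpha}(1-r^\gamma)^{p-\beta}\,dx$, and the last step from $u'$ to $u$ is Theorem \ref{T IH remainder} with the original parameters $(\alpha,\beta)$, which supplies the missing factor $\tfrac{\beta-1}{p}\gamma$ (the $j=0$ term) together with the remainder $\psi_{N,p,\alpha,\beta}(u)$; multiplying through yields (\ref{IR gene b remainder}).

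The main obstacle is the parameter bookkeeping in part (II): at each step one must check that the shifted parameters satisfy the hypotheses of (\ref{IH gene}) (respectively of its one-dimensional counterpart), i.e.\ that $\beta-jp>1$ — which forces the restriction $\beta>(k-1)p+1$ — and that the relevant $|x|$-exponent is admissible. The binding $|x|$-constraints come from the two extreme steps: the final $j=0$ step (Theorem \ref{T IH remainder}) requires $\alpha\le N-(\beta-1)\gamma$, while the lowest odd step $j=1$ — where the factor $r^{N-1}$ in the radial Laplacian shifts the effective weight by $(N-2)p$ — requires exactly $\alpha\le N-(\beta-p-1)\gamma-(N-2)p$; their minimum is precisely the hypothesis of part (II). A secondary, routine point is that $u\in C^k_{c,{\rm rad}}(B_R)$ is only $k$ times differentiable whereas the weights are singular at the origin, so the integrations by parts and the substitutions $z=r^{N-1}(\cdot)'$ must be justified by inserting a cutoff $\phi_\ep$ vanishing near $r=0$ and letting $\ep\to0$, the $\alpha$-hypotheses ensuring that the cutoff contributions tend to zero (the outer boundary $\pd B_R$ causes no difficulty since $u$ has compact support).
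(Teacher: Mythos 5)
Your argument is essentially the paper's own proof, read in the opposite direction: the author builds the chain upward, first $u\to\nabla u$ via Theorem~\ref{T IH remainder} (which produces $\psi$), then $\nabla u\to\lap u$ via Proposition~\ref{Prop WHR} (resp.\ Corollary~\ref{Cor IHR 1 to 2}), and finally $\lap u\to\nabla^k u$ via Corollary~\ref{Cor H 0 to k} (resp.\ an induction using Corollary~\ref{Cor IHR 1 to 2}/Theorem~\ref{T IH}); you descend through the same chain. The three ingredients, the factor bookkeeping, and the identification of the two binding $\alpha$-constraints ($j=0$: Theorem~\ref{T IH remainder}; $j=1$: the first Hardy--Rellich step, $\alpha\le N-(\beta-p-1)\gamma-(N-2)p$) all match the paper exactly.

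One point you should repair: in part (I) you invoke the classical Rellich inequality~(\ref{R_p}) for the step from $\nabla^k u$ to $\lap u$, but (\ref{R_p}) carries the restriction $\alpha\in(2+2(m-1)p,N)$, whereas Theorem~\ref{T IR gene remainder} imposes no lower bound on $\alpha$. Since $u$ is radially symmetric, the correct reference is Corollary~\ref{Cor H 0 to k}, the radial Rellich inequality with constant $A_{k,p,\alpha}^p$ valid for \emph{all} $\alpha<N$ (the symmetry-breaking phenomenon discussed before the proof is precisely what makes this distinction matter, and it is why the paper phrases its Rellich step in terms of Corollary~\ref{Cor H 0 to k}). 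With that citation fixed, and noting — as you did — the implicit positivity requirement $\beta>(k-1)p+1$ needed so that every intermediate $\beta-jp>1$, your proof is complete and correct.
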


Concerning the classical Rellich inequality (\ref{R_p}), we need the assumption where $\alpha > 2 + 2(m-1)p$. In fact, if there is no restriction with respect to $\alpha < N$, it is possible that the best constant of (\ref{R_p}) for any functions is less than $A_{k,p, \alpha}^p$. At least when $p=2$, this is true. Namely, symmetry breaking phenomenon occurs for some $\alpha$. For the details, see \cite{CM,TZ}. However, the best constant of (\ref{R_p}) for radially symmetric functions is $A_{k,p,\alpha}^p$ for any $\alpha < N$, see e.g. \cite{M(2014)} or Corollary \ref{Cor H 0 to k} in \S \ref{Appendix}. 
This is the reason why we do not need any restrictions with respect to $\alpha < N$ in Theorems in this section.

%%%%%%%%%%%%%%%%%%%%%%%%%%%%%%%%%%%%%%%

\begin{proof}(Proof of Theorem \ref{T IR gene remainder}) 
(I) We obtain the following from Theorem \ref{T IH remainder} with $\beta = p$ and (\ref{App HR 1 to 2}) in Proposition \ref{Prop WHR}.
\begin{align*}
\( \frac{p-1}{p} \gamma \)^p \int_{B_R} \frac{|u|^p}{|x|^{\alpha} \( 1-\( \frac{|x|}{R} \)^\gamma \)^{p}}
+  \psi_{N,p,\alpha, p} (u)
&\le \int_{B_R} \frac{|\nabla u|^p}{|x|^{\alpha -p} }\,dx \\
&\le \left| \, \frac{N(p-1) + \alpha -2p}{p} \, \right|^{-p} \int_{B_R} \frac{|\lap u|^p}{|x|^{\alpha - 2p} }\,dx
\end{align*}
This implies the inequality (\ref{IR gene remainder}) in the case where $k=2$. 
Combining this with Corollary \ref{Cor H 0 to k}, we have
\begin{align*}
&\( \frac{p-1}{N-\alpha} \gamma \,A_{2,p,\alpha} \)^p \int_{B_R} \frac{|u|^p}{|x|^{\alpha} \( 1-\( \frac{|x|}{R} \)^\gamma \)^{p}}
+ \left| \, \frac{N(p-1) + \alpha -2p}{p} \, \right|^{p} \psi_{N,p,\alpha, p} (u) \\
&\le  \int_{B_R} \frac{|\lap u|^p}{|x|^{\alpha - 2p} }\,dx 
\le A_{k-2, p, \alpha -2p}^{-p} \int_{B_R} \frac{|\nabla^{k-2} \lap u|^p}{|x|^{\alpha -2p -(k-2)p} }\,dx 
= A_{k-2, p, \alpha -2p}^{-p} \int_{B_R} \frac{|\nabla^{k} u|^p}{|x|^{\alpha -kp} }\,dx.
\end{align*}
Since $A_{2,p,\alpha} \,A_{k-2, p, \alpha -2p} = A_{k,p,\alpha}$, 
the inequality (\ref{IR gene remainder}) holds for any $k$. 

\noindent
(II) Let $\alpha \le \min \{ N -(\beta -p -1)\gamma - (N-2) p, \, N - (\beta -1)\gamma \}$. Then we obtain the following from Theorem \ref{T IH remainder} and Corollary \ref{Cor IHR 1 to 2}.
\begin{align*}
\( \frac{\beta -1}{p} \gamma \)^p &\int_{B_R} \frac{|u|^p}{|x|^{\alpha} \( 1-\( \frac{|x|}{R} \)^\gamma \)^{\beta}} 
+ \psi_{N,p,\alpha, \beta} (u) \\
&\le \int_{B_R} \frac{|\nabla u|^p}{|x|^{\alpha -p} \( 1-\( \frac{|x|}{R} \)^\gamma \)^{\beta -p}}\,dx\\
&\le \( \frac{\beta -p-1}{p} \gamma \)^{-p} \int_{B_R} \frac{|\lap u|^p}{|x|^{\alpha -2p} \( 1-\( \frac{|x|}{R} \)^\gamma \)^{\beta -2p}}\,dx.
\end{align*}
This implies the inequality (\ref{IR gene b remainder}) in the case where $k=2$. 
Note that $\alpha -jp \le N -(\beta - jp -1)\gamma - (N-1) p$ and $\alpha - jp \le N - (\beta -jp -1) \gamma$ hold for any integer $j \ge 1$. Assume that the inequality (\ref{IR gene b remainder}) holds for $k \ge 2$. Then, from Corollary \ref{Cor IHR 1 to 2} or Theorem \ref{T IH}, we have
\begin{align*}
\( \prod_{j=0}^{k} \frac{\beta -jp -1}{p} \gamma \)^p &\int_{B_R} \frac{|u|^p}{|x|^{\alpha} \( 1-\( \frac{|x|}{R} \)^\gamma \)^{\beta}} 
+ \( \prod_{j=1}^{k} \frac{\beta -jp -1}{p} \gamma \)^p \psi_{N,p,\alpha, \beta} (u) \notag \\
&\le \( \frac{\beta -kp -1}{p} \gamma \)^p \int_{B_R} \frac{|\nabla^k u|^p}{|x|^{\alpha -kp} \( 1-\( \frac{|x|}{R} \)^\gamma \)^{\beta -kp}}\,dx\\
&\le \int_{B_R} \frac{|\nabla^{k+1} u|^p}{|x|^{\alpha -(k+1)p} \( 1-\( \frac{|x|}{R} \)^\gamma \)^{\beta -(k+1)p}}\,dx.
\end{align*}
Therefore the inequality (\ref{IR gene b remainder}) holds for $k+1$. Hence, we see that the inequality (\ref{IR gene b remainder}) holds for any $k \ge 2$.
\qed
\end{proof}

%%%%%%%%%%%%%%%%%%%%%%%%%%%%%%

\begin{proof}(Proof of Theorem \ref{T IR}) For the simplicity, we set $R=1$. 

\noindent
(I) Let $\gamma = \frac{N-\alpha}{p-1}$. 
To show the optimality of the constant $A_{k,p,\alpha}^p$ in (\ref{IR gene}), we consider the test function
\begin{align*}
	g_B (x)= \psi_\delta (x) \,|x|^{-B},
\end{align*}
for $B < \frac{N -\alpha}{p}$ and small $\delta >0$, 
where $\psi_\delta$ is a smooth radially symmetric function which satisfies $0 \le \psi_\delta \le 1, \psi_\delta \equiv 0$ on $B_1 \setminus B_{2\delta}$ and $\psi_\delta \equiv 1$ on $B_{\delta}$. 
Since
\begin{align*}
| \nabla^k  |x|^B  | = |x|^{A-k} 
\begin{cases}
\vspace{0.5em}
\left| \prod_{j=0}^{m-1} (A-2j) (N+A-2j-2) \,\right| \quad &\text{if}\,\, k=2m,\\
\left| (A-2m) \, \prod_{j=0}^{m-1} (A-2j) (N+A-2j-2) \,\right| &\text{if}\,\, k=2m+1,
\end{cases}
\end{align*}
we have
\begin{align*}
A_{k,p,\alpha}^p
\le \frac{\int_{B_1} \frac{| \nabla^k g_B |^p}{|x|^{\alpha -kp}} \,dx}{\int_{B_1} \frac{|g_B|^p}{|x|^\alpha \( 1-|x|^\gamma \)^p} \,dx} 
\le A_{k,p,\alpha}^p + o(1)
\quad \( B \to \frac{N-\alpha}{p},\,\, \delta \to 0 \).
\end{align*}
Therefore the constant $A_{k,p,\alpha}^p$ in (\ref{IR gene}) is optimal. 
Since there exists the nonnegative remainder term in Theorem \ref{T IR gene remainder} (I), we observe that if there exists an extremal function $U=U(x)$ of the inequality (\ref{IR gene}), then $U(x)= c \( |x|^{-\frac{N-\alpha}{p-1}} -1 \)^{\frac{p -1}{p}} = c |x|^{-\frac{N-\alpha}{p}} \(  1-|x|^{\frac{N-\alpha}{p-1}} \)^{\frac{p -1}{p}}$ for some $c \in \re$. However, if $c \not= 0$, then the right-hand side of (\ref{IR gene}) diverges since
\begin{align*}
\int_{B_\ep } \frac{| \nabla^k U |^p}{|x|^{\alpha -kp} } \,dx
&\ge C(\ep) \int_{B_\ep} \frac{\left| \nabla^k \,|x|^{-\frac{N-\alpha}{p}} \right|^p}{|x|^{\alpha -kp} } \,dx + D(\ep)\\
&\ge \tilde{C}(\ep) \int_{B_\ep} |x|^{-N} \,dx + D(\ep) =\infty
\end{align*}
for any small $\ep >0$, where $C(\ep), \tilde{C}(\ep) \not=0, D(\ep)$ are some constants depending on $\ep$.

\noindent
(II) Let $\alpha \le N - (\beta -p -1)\gamma - (N-2)p$. 
We show the optimality of the constant $\( \prod_{j=0}^{k-1} \frac{\beta -jp -1}{p} \gamma \)^p$ in (\ref{IR gene b}). For $A > \frac{\beta-1}{p}$ and small $\delta >0$, set
\begin{align*}
	f_A (x)= \phi_\delta (x) \,(1-|x|^\gamma)^A,
\end{align*}
where $\phi_\delta$ is a smooth radially symmetric function which satisfies $0 \le \phi_\delta \le 1, \phi_\delta \equiv 0$ on $B_{1-2\delta}$ and $\phi_\delta \equiv 1$ on $B_1 \setminus B_{1-\delta}$. 
Note that
\begin{align*}
| \nabla^k (1-|x|^\gamma)^A | &= \left| \,\sum_{j=1}^k C_{k,j} \,|x|^{j\gamma -k} (1-|x|^\gamma )^{A-j} \,\right| \\
&= | C_{k,k} | \,|x|^{k(\gamma -1)} (1-|x|^\gamma )^{A-k} + o\( (1-|x|^\gamma )^{A-k} \) \quad (|x| \to 1),
\end{align*}
where $C_{k,k} = (-\gamma)^k \prod_{\ell =1}^k (A-\ell +1)$, see Proposition \ref{prop log} in \S \ref{Appendix}.
Then we have
\begin{align*}
\( \prod_{j=0}^{k-1} \frac{\beta -jp -1}{p} \gamma \)^p 
&\le \frac{\int_{B_1} \frac{| \nabla^k f_A |^p}{|x|^{\alpha -kp} \( 1-|x|^\gamma \)^{\beta -kp}} \,dx}{\int_{B_1} \frac{|f_A|^p}{|x|^\alpha \( 1-|x|^\gamma \)^\beta} \,dx} \\
&\le \frac{\gamma^{kp} \left| \prod_{\ell =1}^k (A-\ell +1) \right|^p \int_{1-\delta}^1 \( 1-r^\gamma \)^{Ap-\beta} r^{N-1-\alpha + k\gamma p}\,dr + o(1)
%\int_{2-\delta}^{1-\delta} |\nabla^k (f_A)|^p \( 1-r^\gamma \)^{kp-\beta} r^{N-1-\alpha+kp}\,dr
}{\int_{1-\delta}^{1} \( 1-r^\gamma \)^{Ap-\beta} r^{N-1-\alpha}\,dr} \\
&= \( \prod_{j=0}^{k-1} \frac{\beta -jp -1}{p} \gamma \)^p + o(1) \quad \( A \to \frac{\beta-1}{p},\,\, \delta \to 0 \).
\end{align*}
Therefore the constant $\( \prod_{j=0}^{k-1} \frac{\beta -jp -1}{p} \gamma \)^p$ in (\ref{IR gene b}) is optimal.

The proof of Theorem \ref{T IR} is now complete.
\qed
\end{proof}

In the end of this section, we formulate the following three conjectures for the general case   (Ref. \cite{B(Math.Z)} p.879):

\begin{conjecture}
Let $k \ge 2, 1 < p < \frac{N}{k}$, and 
$0 < \gamma \le \frac{N-kp}{kp -1}$. Then the inequality
\begin{align*}
\( \prod_{j=1}^{k} \frac{j p -1}{p} \gamma \)^p \int_{B_R} \frac{|u|^p}{|x|^{kp} \( 1-\( \frac{|x|}{R} \)^\gamma \)^{kp}} 
\le \int_{B_R} |\nabla^k u|^p \,dx
\end{align*}
holds for any functions $u \in C_{c}^{k}(B_R)$. Furthermore, the constant $\( \prod_{j=1}^{k} \frac{j p -1}{p} \gamma \)^p$ is optimal and is not attained for $u \not\equiv 0$ for which the right-hand side is finite.
\end{conjecture}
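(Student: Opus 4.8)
The plan is to obtain the inequality by iterating, $k$ times, a single first-order improved Hardy step that peels off one radial derivative while raising the power of the distance factor by $p$. The prototype of such a step is Theorem~\ref{T IH}: applied to a scalar function $w$ with parameters $\alpha'=\beta'=jp$ it reads $(\tfrac{jp-1}{p}\gamma)^{p}\int_{B_R}|w|^{p}|x|^{-jp}(1-(|x|/R)^\gamma)^{-jp}\,dx\le\int_{B_R}|\nabla w\cdot\tfrac{x}{|x|}|^{p}|x|^{-(j-1)p}(1-(|x|/R)^\gamma)^{-(j-1)p}\,dx$, and it is valid precisely when $jp\le N-(jp-1)\gamma$. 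Running $j=k,k-1,\dots,1$ would telescope the $k$ constants into $\prod_{j=1}^{k}(\tfrac{jp-1}{p}\gamma)^{p}$, bring the weight at the top level down to the trivial one, i.e.\ to $\int_{B_R}|\nabla^{k}u|^{p}\,dx$, and leave as the only surviving restriction the binding case $j=k$, namely $(kp-1)\gamma\le N-kp$, which is exactly the conjectured range $0<\gamma\le\tfrac{N-kp}{kp-1}$ (here $p<N/k$ is what makes $N-kp>0$). The delicate point is that Theorem~\ref{T IH} only converts a \emph{scalar} function into its \emph{radial} derivative, whereas the chain $\nabla^{k}u$ also forces ``divergence steps'' of the form $\nabla\lap^{m}u\mapsto\lap^{m+1}u$; a genuine proof must supply those as well, respecting the same $\beta$-bookkeeping.

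For radially symmetric $u$ every divergence step is one-dimensional and is exactly the kind of weighted Hardy--Rellich inequality behind the proof of Theorem~\ref{T IR}. For $p=2$ and general $u$ one instead mimics the proof of Theorem~\ref{T IR p=2}: write $\lap^{m}u$ (or $\nabla\lap^{m}u$) in polar coordinates, integrate by parts in $r$, and discard the resulting angular remainders after checking that they have the right sign. This already settles the case $k=2$, which is nothing but Theorem~\ref{T IR p=2}~(II) with $\alpha=kp=4$, whose hypothesis $\gamma\le\tfrac{N-4}{3}$ is precisely the conjectured one. For $k\ge3$ and $p=2$ one would need the strengthening of Theorem~\ref{T IR p=2}~(II) that carries an extra singular factor $(1-(|x|/R)^\gamma)^{-\beta}$ on the $|\lap u|^{2}$ side; this is the difficulty already flagged in Remark~\ref{Rem higher geo}, and it appears because integrating by parts against such a factor produces boundary-concentrated terms of indefinite sign that cannot simply be thrown away.

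For $p\ne2$ and non-radial $u$ the obstacle is genuine, and it is the same one that keeps the classical ($\gamma=1$) geometric Rellich inequality open for $p\ne2$: $|\lap\varphi|^{p}$ admits no convex splitting, and no radial derivative of $\varphi$ is pointwise dominated by $|\lap\varphi|$, so the $p=2$ polar-coordinate computation has no substitute. Even restricting to radial $u$, iterating the presently available tools yields only Theorem~\ref{T IR}~(II), whose hypothesis $\alpha\le N-(\beta-p-1)\gamma-(N-2)p$ is strictly stronger than $\gamma\le\tfrac{N-kp}{kp-1}$ once $k\ge2$; removing that loss --- for instance via a second-order analogue of the remainder identity of Theorem~\ref{T IH remainder 2}, or via a symmetrization that radializes $u$ without increasing $\int_{B_R}|\nabla^{k}u|^{p}\,dx$ --- is exactly the step I expect to be hardest, and it is why the statement is posed only as a conjecture. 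Granting the inequality, optimality and non-attainability would follow the familiar pattern: the constant is tested with the radial family $f_{A}(x)=\phi_{\delta}(x)\,(1-(|x|/R)^\gamma)^{A}$ letting $A\downarrow\tfrac{kp-1}{p}$ and $\delta\to0$ as in the proof of Theorem~\ref{T IR}, and since the would-be extremal $(1-(|x|/R)^\gamma)^{(kp-1)/p}$ (times an arbitrary angular factor) makes the right-hand side diverge, the constant cannot be attained for $u\not\equiv0$ with finite right-hand side.
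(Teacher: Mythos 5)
This statement is posed in the paper as a conjecture, and the paper offers no proof of it; your write-up correctly recognizes this and your analysis of why it is open matches the paper's own discussion. Specifically, your identification of the $k=2$, $p=2$ case with Theorem~\ref{T IR p=2}~(II) at $\alpha=4$, of Remark~\ref{Rem higher geo} as the obstacle to pushing the $p=2$ argument to $k\ge 3$ (the missing $L^2$ Hardy--Rellich step with the singular boundary weight on both sides), of the parasitic loss in the hypothesis of Theorem~\ref{T IR}~(II) and Corollary~\ref{Cor IHR 1 to 2} that makes the radial iteration fall short of the conjectured range $\gamma\le\frac{N-kp}{kp-1}$ once $k\ge 2$, and of the standard test family $\phi_\delta(1-(|x|/R)^\gamma)^A$ with $A\downarrow(kp-1)/p$ for optimality and the divergence of the right-hand side at the virtual extremal for non-attainability, are all accurate and consistent with the paper. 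Since there is no proof in the paper to compare against, there is nothing further to adjudicate.
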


\begin{conjecture}
Let $1 < p < \infty$ and $k \ge 2$. Then the inequality
\begin{align*}
\( \prod_{j=1}^{k} \frac{j p -1}{p} \)^p \int_{B_R} \frac{|u|^p}{{\rm dist} (x, \pd B_R)^{kp}} 
\le \int_{B_R} |\nabla^k u|^p \,dx
\end{align*}
holds for any functions $u \in C_{c}^{k}(B_R)$. Furthermore, the constant $\( \prod_{j=1}^{k} \frac{j p -1}{p} \)^p$ is optimal and is not attained for $u \not\equiv 0$ for which the right-hand side is finite.
\end{conjecture}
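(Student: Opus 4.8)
The plan is to deduce the inequality by iterating the geometric Hardy inequality that is already available in the paper, in the same spirit as the proof of Theorem~\ref{T IR gene remainder}, but now with the distance ${\rm dist}(x,\partial B_{R})$ playing the role of the power weights; the sharpness and non-attainability are then settled by a boundary-concentrating test family, exactly as in the proofs of Theorems~\ref{T IR} and \ref{T IR p=2}. Normalize $R=1$ and put $d=d(x)={\rm dist}(x,\partial B_{1})=1-|x|$. Since $u\in C^{k}_{c}(B_{1})$ vanishes near $\partial B_{1}$, every integral below is automatically finite, so the real content is the value of the constant and its non-attainability. Note also that the argument stays entirely within scalar functions: $\nabla^{2m}u=\lap^{m}u$ and $\nabla^{2m+1}u=\nabla\big(\lap^{m}u\big)$, so it suffices to track how $\nabla$ and $\lap$ act on scalars.

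There are two building blocks. The \emph{geometric Hardy step}: by (\ref{H_p geo}) (equivalently Theorem~\ref{T IH another}(II) with $R=\alpha=\gamma=1$), for every $\mu>1$ and every $w\in C^{1}_{c}(B_{1})$,
\begin{align*}
\Big(\tfrac{\mu-1}{p}\Big)^{p}\int_{B_{1}}\frac{|w|^{p}}{d^{\mu}}\,dx\;\le\;\int_{B_{1}}\frac{|\nabla w|^{p}}{d^{\mu-p}}\,dx .
\end{align*}
The \emph{Hardy--Rellich step}: for a suitable range of $\mu$ and every $w\in C^{2}_{c}(B_{1})$,
\begin{align*}
\Big(\tfrac{\mu-p-1}{p}\Big)^{p}\int_{B_{1}}\frac{|\nabla w|^{p}}{d^{\mu-p}}\,dx\;\le\;\int_{B_{1}}\frac{|\lap w|^{p}}{d^{\mu-2p}}\,dx .
\end{align*}
Composing the two gives the second-order geometric Rellich inequality with weight $d^{-\mu}$ and constant $\big(\tfrac{(\mu-1)(\mu-p-1)}{p^{2}}\big)^{p}$. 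Peeling one factor $\lap$ at a time off $\nabla^{k}u$ (and inserting one additional geometric Hardy step when $k$ is odd, via $\nabla^{2m+1}u=\nabla(\lap^{m}u)$), with $\mu$ running over $2p,4p,\dots,kp$, one is led to
\begin{align*}
\int_{B_{1}}|\nabla^{k}u|^{p}\,dx\;\ge\;\Big(\,\prod_{j=1}^{k}\tfrac{jp-1}{p}\,\Big)^{p}\int_{B_{1}}\frac{|u|^{p}}{d^{kp}}\,dx ,
\end{align*}
the constant telescoping exactly as in Theorem~\ref{T IR}(II). Formally this is an induction on $k$ in steps of two carried out for the weighted inequality $\int_{B_{1}}\tfrac{|\nabla^{k}u|^{p}}{d^{\mu-kp}}\,dx\ge\big(\prod_{j=1}^{k}\tfrac{\mu-(k-j)p-1}{p}\big)^{p}\int_{B_{1}}\tfrac{|u|^{p}}{d^{\mu}}\,dx$.

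For optimality and non-attainability I would reuse the pattern already established in the paper: with a radial cut-off $\phi_{\delta}$ supported in a neighbourhood of $\partial B_{1}$, the family $f_{A}(x)=\phi_{\delta}(x)(1-|x|)^{A}$ makes the associated Rayleigh quotient converge to $\big(\prod_{j=1}^{k}\tfrac{jp-1}{p}\big)^{p}$ as $A\downarrow\tfrac{kp-1}{p}$ and $\delta\downarrow0$; and the only candidate extremal is the \emph{virtual extremal} $u(x)=f\big(\tfrac{x}{|x|}\big)(1-|x|)^{(kp-1)/p}$, for which $\int_{B_{1}}|u|^{p}d^{-kp}\,dx=+\infty$, so no admissible extremal exists. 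This part is routine.

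The main obstacle is the Hardy--Rellich step for $p\neq2$ without a symmetry assumption. When $p=2$ it follows from a routine integration by parts, the Cauchy--Schwarz inequality and the geometric Hardy step, the lower-order contributions being harmless since $\lap d$ is bounded near $\partial B_{1}$. For $p\neq2$ the natural integration by parts produces the $p$-Laplacian ${\rm div}\big(|\nabla w|^{p-2}\nabla w\big)$ instead of $\lap w$; for radial $w$ one can still close the argument by reducing to the one-dimensional Rellich inequalities of Section~\ref{Appendix}, but for general $w$ the tangential part of $\lap w$ no longer separates from the normal second derivative with a favourable sign. One expects that there is no symmetry breaking here (near $\partial B_{1}$ the problem is essentially one-dimensional), so a plausible route is an $L^{p}$ splitting of $\lap w$ into a normal-second-derivative term plus a nonnegative tangential remainder, obtained by inserting the algebraic remainder $R_{p}(\xi,\eta)\ge0$ of Theorem~\ref{T IH remainder 2} in the normal variable; proving that the tangential remainder keeps the right sign for every $p\in(1,\infty)$ and every $N$ is exactly where the argument stalls, which is why the statement is posed here as a conjecture.
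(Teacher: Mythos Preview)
The statement you are addressing is listed in the paper as a \emph{conjecture}, not a theorem; it appears among the three open problems formulated at the end of Section~\ref{Sec Higher}, and the paper provides no proof. There is therefore nothing to compare your proposal against. Your write-up is honest about this: you correctly isolate the bottleneck as the Hardy--Rellich step
\[
\Big(\tfrac{\mu-p-1}{p}\Big)^{p}\int_{B_{1}}\frac{|\nabla w|^{p}}{d^{\mu-p}}\,dx\le\int_{B_{1}}\frac{|\lap w|^{p}}{d^{\mu-2p}}\,dx
\]
for non-radial $w$ when $p\neq2$, and you note that the natural integration by parts produces ${\rm div}\big(|\nabla w|^{p-2}\nabla w\big)$ rather than $\lap w$. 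This is exactly the known obstruction the paper is flagging (with the reference to \cite{B(Math.Z)}, p.~879), so your proposal is not a proof but an accurate diagnosis of why the statement is open.

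One caution about the part you call routine: even for $p=2$ the iteration you sketch is not automatic. The paper's own $p=2$ second-order result, Theorem~\ref{T IR p=2}(II), carries the dimensional restriction $\alpha\le\min\{N-\gamma+2,\,N-3\gamma\}$, which for $(\alpha,\gamma)=(4,1)$ forces $N\ge7$; the unrestricted geometric Rellich inequality~(\ref{R_p geo}) for $p=2$ is quoted from \cite{O,B} rather than derived by a Hardy/Hardy--Rellich cascade. So your claim that for $p=2$ ``the lower-order contributions are harmless since $\lap d$ is bounded'' glosses over genuine curvature terms that the paper's method does not absorb in low dimension. This does not change the status of the conjecture, but it means the $p=2$ pathway is less straightforward than your sketch suggests.
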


\begin{conjecture}
Let $N > k \ge 2$. Then the inequality
\begin{align*}
\( \prod_{j=1}^{k} \frac{j N -k}{N} \)^{\frac{N}{k}} \int_{B_R} \frac{|u|^{\frac{N}{k}}}{|x|^N \( \log \frac{R}{|x|} \)^{N}} 
\le \int_{B_R} |\nabla^k u|^{\frac{N}{k}} \,dx
\end{align*}
holds for any functions $u \in C_{c}^{k}(B_R)$. Furthermore, the constant $\( \prod_{j=1}^{k} \frac{j N -k}{N} \)^{\frac{N}{k}}$ is optimal and is not attained for $u \not\equiv 0$ for which the right-hand side is finite.
\end{conjecture}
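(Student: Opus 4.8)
The borderline structure here is that $p=\tfrac{N}{k}$, so $kp=N$ and $\tfrac{jp-1}{p}=\tfrac{jN-k}{N}$, whence the conjectured constant is exactly $\bigl(\prod_{j=1}^{k}\tfrac{jp-1}{p}\bigr)^{p}$. This identifies the statement as the borderline exponent case of the first of the three conjectures above (equivalently, its $\gamma\to0$ limit with $p=\tfrac{N}{k}$), and suggests the plan of iterating the first order critical Hardy inequality (\ref{IH log}) exactly $k$ times, along the chain $u,\nabla^{1}u,\dots,\nabla^{k-1}u$, applying it at level $j$ to the relevant scalar with parameters $(\alpha_{j},\beta_{j})=(N-jp,\,N-jp)$ for $j=0,\dots,k-1$. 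Each application contributes the factor $\bigl(\tfrac{\beta_{j}-1}{p}\bigr)^{p}$, and since $\tfrac{\beta_{j}-1}{p}=\tfrac{(k-j)N-k}{N}$ these telescope to $\bigl(\prod_{\ell=1}^{k}\tfrac{\ell N-k}{N}\bigr)^{N/k}$; moreover the hypothesis $N>k$ enters precisely because the smallest weight occurring in the chain is $\beta_{k-1}=p=\tfrac{N}{k}$, so that all $k$ applications of (\ref{IH log}) are admissible (i.e.\ have a positive constant) if and only if $N>k$. For $N=4,\,k=2$ this programme is already complete: the inequality itself is (\ref{ICR b p=2}) with $\alpha=N$, which is the limiting case of Theorem~\ref{T IR p=2}(II) as $\gamma\to0$.

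Carrying it out in general, I would first restrict to radially symmetric $u$. As in Section~\ref{S IR rad}, one passes to $r=|x|$, writes $\nabla^{k}u$ as an explicit order $k$ radial operator applied to $g(r)=u(r\w)$, and chains the one dimensional weighted Hardy inequalities of \S\ref{Appendix}, now inserting a logarithmic factor $(\log\tfrac{R}{r})^{-p}$ at \emph{every} order rather than only at the innermost one; this last point is the one structural difference from the proof of Corollary~\ref{C IR log}, which keeps only a single $(\log\tfrac{R}{r})^{-p}$. The virtual extremal produced by the $R_{p}=0$ equality conditions is then a product of powers of $\log\tfrac{R}{|x|}$ concentrating at $\pd B_{R}$, so optimality should follow by testing with $\phi_{\delta}(x)\bigl(\log\tfrac{R}{|x|}\bigr)^{A}$ and letting $A$ tend to the critical value, and non-attainability from the divergence of $\int_{B_{R}}|\nabla^{k}u|^{N/k}\,dx$ for that extremal near $\pd B_{R}$, exactly as in the proofs of Theorems~\ref{T IH} and \ref{T IR}.

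The real obstacles are two. First, in the iteration the even-to-odd passages are free, since $\bigl|\nabla(\lap^{i}u)\cdot\tfrac{x}{|x|}\bigr|\le|\nabla\lap^{i}u|=|\nabla^{2i+1}u|$, but each odd-to-even passage $\nabla^{2i+1}u\to\nabla^{2i+2}u=\lap^{i+1}u$ requires a weighted Hardy--Rellich inequality $\int\tfrac{|\nabla v|^{p}}{w_{1}}\le C\int\tfrac{|\lap v|^{p}}{w_{2}}$ with $v=\lap^{i}u$ and the \emph{sharp} constant; the version behind Theorem~\ref{T IR}(II) (via Corollary~\ref{Cor IHR 1 to 2}) is only available under the restriction $\alpha\le N-(\beta-p-1)\gamma-(N-2)p$, which in the present regime $\alpha=N$, $\beta=kp$, $\gamma\to0$ forces $N\le2$, so one must find a genuine substitute valid at $\alpha=N$ that still delivers the factor $\tfrac{\beta-1}{p}\cdot\tfrac{\beta-p-1}{p}$ — this is essentially the content of the conjecture and the reason it is stated as one (compare Remark~\ref{Rem higher geo} and the remark following Theorem~\ref{T IR}). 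Second, to reach \emph{all} $u\in C_{c}^{k}(B_{R})$ rather than only radial ones, one cannot use the quadratic expansion of $|\lap u|^{2}$ that makes the $p=2$ case of Theorem~\ref{T IR p=2} work, since here $p=\tfrac{N}{k}\ne2$ in general: the angular part of $\nabla^{k}u$ in polar coordinates must be shown to contribute nonnegatively without that identity. I expect this second point — in effect, establishing the sharp non-radial Hardy--Rellich inequalities on $L^{p}$ for $p\ne2$ at the borderline exponent — to be the main obstacle, and a complete resolution likely needs an idea beyond the divergence theorem and the one dimensional reduction used in the present paper.
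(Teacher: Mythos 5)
The statement you were asked to prove is explicitly a \emph{conjecture} in the paper, not a theorem, so there is no proof in the paper to compare against. Your assessment is therefore the correct one: you correctly identify the borderline structure ($p=N/k$, $kp=N$, and $\frac{jp-1}{p}=\frac{jN-k}{N}$, so this is the $\gamma\to0$ limit of the first conjecture at the endpoint exponent), you correctly verify that the condition $N>k$ is exactly what keeps the innermost parameter $\beta_{k-1}=p$ strictly greater than $1$ so that each application of (\ref{IH log}) has a positive constant, and you correctly locate the two obstructions: (i) the odd-to-even step requires a sharp weighted Hardy--Rellich inequality at $\alpha=N$, which Corollary \ref{Cor IHR 1 to 2} does not supply because its hypothesis $\alpha\le N-(\beta-p-1)\gamma-(N-2)p$ degenerates there (precisely the point made in Remark \ref{Rem assumption} and in the remark following Theorem \ref{T IR} about not recovering (\ref{R_p geo})); and (ii) the passage from radial to general $u$ relies on the $L^{2}$ orthogonal expansion of $|\Delta u|^{2}$, which has no analogue for $p=N/k\ne2$. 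The one small overstatement is the claim that the $N=4$, $k=2$ case is ``already complete'': the inequality (\ref{ICR b p=2}) with $\alpha=N$ is only reached as the $\gamma\to0$ limit, and Theorem \ref{T IR p=2}(II) itself excludes $\alpha=N$ for $\gamma>0$, so optimality and non-attainability in that limiting case would still need a short separate argument rather than following verbatim. Apart from that, your account of why this is stated as a conjecture and what a proof would require is accurate and consistent with the paper's own remarks.
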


%%%%%%%%%%%%%%%%%%%%%%%%%%%%%%%%%%%%%%%%%%%%%%%%%%%%%%%%%%%%%%%%%%%%%%%%%%%%%%%%%%%%%%%%%%%%%%%%%%%%%%%%%%%%%%%%%%%
%
% \S 6 Appendix
%
%%%%%%%%%%%%%%%%%%%%%%%%%%%%%%%%%%%%%%%%%%%%%%%%%%%%%%%%%%%%%%%%%%%%%%%%%%%%%%%%%%%%%%%%%%%%%%%%%%%%%%%%%%%%%%%%%%%

\section{Appendix}\label{Appendix}
 
First, we list the well-known one dimensional Hardy type inequality (\ref{App zero O}) and the Hardy-Rellich inequality (\ref{App HR 1 to 2}) for radially symmetric functions including their proofs.

\begin{proposition}\label{Prop WHR}
The following inequalities hold:
\begin{align}\label{App zero O}%Kufner ref
&\left| \frac{a+1-p}{p} \right|^p \int_0^R r^{a-p} |w(r) |^p\,dr 
\le \int_0^R r^a | w'(r)|^p \,dr  \\
&\text{for any}\,w \in C^1 (0, R)\,\text{with}\,\lim_{r \to 0} r^{a+1-p} |w(r)|^p =0=\lim_{r \to R -0} w(r),\, \text{where}\,\, a\in \re,\, p\ge1.\notag \\
\label{App HR 1 to 2}
&\left| \frac{N(p-1) + \alpha -p}{p} \right|^p \int_{B_R} \frac{|\nabla u|^p}{|x|^\alpha} \,dx 
\le \int_{B_R} \frac{|\lap u|^p}{|x|^{\alpha -p}} \,dx\\
&\text{for any}\,u \in C_{c,{\rm rad}}^2 (B_R), \,\text{where}\,\, p \ge 1, \alpha <N.\notag
\end{align}
\end{proposition}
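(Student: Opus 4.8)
The plan is to establish the one-dimensional inequality (\ref{App zero O}) first, by a single integration by parts followed by H\"older's inequality, and then to obtain the radial Hardy--Rellich inequality (\ref{App HR 1 to 2}) from it by matching exponents under the substitution $\phi = r^{N-1}u'$.

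For (\ref{App zero O}), if $a+1-p=0$ the left-hand constant is zero and there is nothing to prove, so assume $a+1-p\neq0$. Using $r^{a-p}=\frac{1}{a+1-p}\frac{d}{dr}\big(r^{a+1-p}\big)$ and integrating by parts,
\begin{align*}
\int_0^R r^{a-p}|w|^p\,dr = \frac{1}{a+1-p}\Big[r^{a+1-p}|w|^p\Big]_0^R - \frac{p}{a+1-p}\int_0^R r^{a+1-p}|w|^{p-2}w\,w'\,dr,
\end{align*}
where the boundary terms vanish by the assumptions $\lim_{r\to0}r^{a+1-p}|w|^p=0$ and $\lim_{r\to R-0}w(r)=0$. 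Splitting $r^{a+1-p}=r^{(a-p)(1-1/p)}\,r^{a/p}$ (valid since $(a-p)\frac{p-1}{p}+\frac{a}{p}=a+1-p$), H\"older's inequality with exponents $\frac{p}{p-1}$ and $p$ bounds the last integral by $\big(\int_0^R r^{a-p}|w|^p\,dr\big)^{1-1/p}\big(\int_0^R r^a|w'|^p\,dr\big)^{1/p}$, and dividing through gives (\ref{App zero O}). For $p=1$ the same computation applies with ${\rm sgn}(w)$ in place of $|w|^{p-2}w$ and without the H\"older step.

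For (\ref{App HR 1 to 2}), write $u$ in polar coordinates and set $v(r)=u'(r)$, so that $|\nabla u|=|v|$ and $\lap u = v'+\frac{N-1}{r}v = r^{-(N-1)}\big(r^{N-1}v\big)'$. With $\phi(r)=r^{N-1}v(r)$ and $a=-(N-1)(p-1)-\alpha+p$, passing to radial integrals (which produces the common factor $\w_{N-1}$) turns the claim into
\begin{align*}
\left|\frac{a+1-p}{p}\right|^p \int_0^R r^{a-p}|\phi|^p\,dr \le \int_0^R r^a|\phi'|^p\,dr,
\end{align*}
i.e.\ exactly (\ref{App zero O}) for $w=\phi$; since $a+1-p=1-\alpha-(N-1)(p-1)$ one has $\big|\frac{a+1-p}{p}\big|=\big|\frac{N(p-1)+\alpha-p}{p}\big|$, which is the stated constant. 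To invoke (\ref{App zero O}) we check the hypotheses on $\phi$: as $u\in C^2_{c,{\rm rad}}(B_R)$ is compactly supported, $\phi$ vanishes near $r=R$; near $r=0$, smoothness of $u$ gives $v=u'=O(r)$, hence $\phi=O(r^N)$ and $r^{a+1-p}|\phi|^p=O(r^{N+p-\alpha})\to0$ since $\alpha<N$.

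All of this is elementary; the only points needing care are the exponent bookkeeping (the splitting of $r^{a+1-p}$ for H\"older, and the matching of $a$ to the Rellich weight) and the vanishing of the boundary contributions, which is precisely where the hypotheses on $w$ and the restriction $\alpha<N$ enter. I do not expect any genuine obstacle here.
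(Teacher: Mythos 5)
Your proof is correct and follows essentially the same route as the paper: integration by parts plus H\"older for the one-dimensional inequality, then the substitution $w=r^{N-1}u'$ (so that $\lap u = r^{1-N}w'$) to reduce the radial Hardy--Rellich inequality to it. The only minor additions beyond the paper's argument are the explicit remark on the $p=1$ case and the slightly sharper boundary estimate $u'(r)=O(r)$ near $r=0$ (the paper uses only boundedness of $u'$, which already suffices since $\alpha<N$), neither of which changes the substance.
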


%\begin{remark}\label{Rem sonomama}
%If 
%\end{remark}

\begin{proof}
If $a+1-p = 0$, then the inequality (\ref{App zero O}) is trivial. Therefore, we assume that $a+1-p \not= 0$
Since $\lim_{r \to 0} r^{a+1-p} |w(r)|^p =0=w(R)$, we have
\begin{align*}
\int_0^R r^{a-p} |w(r) |^p\,dr 
&= \left[ \frac{r^{a+1-p} \,|w(r) |^p}{a+1-p} \right]_0^R
- \frac{p}{a+1-p} \int_0^R  r^{a+1-p} |w|^{p-2}w w' \,dr \\
&\le \left| \frac{p}{a+1-p} \right| \( \int_0^R r^{a-p} |w(r) |^p\,dr  \)^{\frac{p-1}{p}} \( \int_0^R r^{a} |w'(r) |^p\,dr  \)^{\frac{1}{p}}
\end{align*}
which implies (\ref{App zero O}). Next, we shall show (\ref{App HR 1 to 2}) for $u \in C_{c,{\rm rad}}^2 (B_R)$. By applying (\ref{App zero O}) for $w (r)=r^{N-1}u'(r)$ and $a=-1+p -N(p-1) -\alpha +p$, we have
\begin{align*}
\int_{B_R} \frac{|\lap u|^p}{|x|^{\alpha -p}} \,dx
&= \w_{N-1} \int_0^R r^a | w'(r)|^p \,dr \\
&\ge \w_{N-1} \left| \frac{a+1-p}{p} \right|^p \int_0^R r^{a-p} |w(r) |^p\,dr \\
&= \left| \frac{N(p-1) + \alpha -p}{p} \right|^p \int_{B_R} \frac{|\nabla u|^p}{|x|^\alpha} \,dx.
\end{align*}
Here note that $\lim_{r \to 0} r^{a+1-p} |w(r)|^p = \lim_{r \to 0} r^{N-\alpha} |u'(r)|^p =0$ since $\alpha < N$. 
Therefore we obtain (\ref{App HR 1 to 2}). 
\qed
\end{proof}

As a Corollary, we also obtain the following inequality.

\begin{corollary}\label{Cor H 0 to k}(Ref. \cite{M(2014)})
Let $\alpha < N$. Then the inequality
\begin{align*}
A_{k,p,\alpha}^p \int_{B_R} \frac{|u|^p}{|x|^\alpha} \,dx 
\le \int_{B_R} \frac{|\nabla^k u|^p}{|x|^{\alpha -kp}} \,dx 
\end{align*} 
holds for any radially symmetric functions $u \in C_{c,{\rm rad}}^k (B_R)$.
\end{corollary}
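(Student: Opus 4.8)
The plan is to deduce Corollary \ref{Cor H 0 to k} from Proposition \ref{Prop WHR} alone, by peeling off the $k$ derivatives occurring in $\nabla^k u$ one at a time, in an induction on $k$. The reduction to one dimension is immediate for radial $u$: each iterated Laplacian $\lap^i u$ is again a radial scalar function, which inherits compact support and the regularity class $C_{c,{\rm rad}}^{k-2i}(B_R)$ from $u$; moreover $\nabla^k u=\lap^m u$ when $k=2m$, and $|\nabla^k u|=|\pd_r\lap^m u|$ when $k=2m+1$. Hence, after passing to polar coordinates, both sides of the claimed inequality become weighted one-dimensional integrals in $r=|x|$, and the building blocks are the radial forms of (\ref{App zero O}) (one derivative) and of (\ref{App HR 1 to 2}) (two derivatives, reducing $|\nabla\,\cdot\,|$ to $|\lap\,\cdot\,|$).

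For the base case I take $k=1$, which is just (\ref{App zero O}) applied to $w=u$ in radial form and gives the classical radial Hardy inequality with constant $|\frac{N-\alpha}{p}|^p=|A_{1,p,\alpha}|^p$; the decay hypothesis $\lim_{r\to 0}r^{N-\alpha}|w|^p=0$ holds automatically because $w$ is bounded near the origin and $\alpha<N$, while $w(R)=0$ by compact support. (Equivalently one may start from $k=2$, obtained from $k=1$ by the step below.)

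For the inductive step from $k$ to $k+1$ I distinguish the parity of $k$. If $k=2m$, I write $\nabla^{k+1}u=\nabla(\lap^m u)$ and apply the radial form of (\ref{App zero O}) to the scalar radial function $w=\lap^m u\in C_{c,{\rm rad}}^{1}(B_R)$, choosing the radial exponent so that the weights match; this peels off one derivative at the cost of the factor $\frac{N-\alpha+kp}{p}$ and reduces the right-hand side to $\int_{B_R}|\nabla^k u|^p\,|x|^{kp-\alpha}\,dx$. If $k=2m+1$, I write $\nabla^{k+1}u=\lap(\lap^m u)$ and $\nabla^k u=\nabla(\lap^m u)$ and apply (\ref{App HR 1 to 2}) to $w=\lap^m u$, which lies in $C_{c,{\rm rad}}^{2}(B_R)$ precisely because $u\in C_{c,{\rm rad}}^{k+1}(B_R)$; this costs the factor $\frac{|N(p-1)+\alpha-(k+1)p|}{p}$ and again reduces to $\int_{B_R}|\nabla^k u|^p\,|x|^{kp-\alpha}\,dx$. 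In both cases the relevant weight exponent equals $\alpha-kp\le\alpha<N$, so Proposition \ref{Prop WHR} applies, and one then invokes the inductive hypothesis (order $k$, same $\alpha$) on $u$. It remains to check that the new factor times $|A_{k,p,\alpha}|$ equals $|A_{k+1,p,\alpha}|$: reading off the product formula defining $A_{k,p,\alpha}$, the even step supplies exactly the factor $(N-\alpha+2mp)/p$ that upgrades $A_{2m,p,\alpha}$ to $A_{2m+1,p,\alpha}$, and the odd step supplies the factor $(N(p-1)+\alpha-2(m+1)p)/p$ that upgrades $A_{2m+1,p,\alpha}$ to $A_{2m+2,p,\alpha}$ — the exact analogue of the identity $A_{\tilde k,2,\tilde\alpha}\cdot\frac{(N-\alpha)(N+\alpha-4)}{4}=A_{k,2,\alpha}$ used in the proof of Corollary \ref{Cor IR k p=2}.

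I do not expect a genuine obstacle here: the statement is essentially an iteration of Proposition \ref{Prop WHR}, so the only work is organisational — keeping track of the constants through the two parity cases (handled by the product identity just described) and verifying, at each application, the hypotheses of Proposition \ref{Prop WHR}, namely the $r\to 0$ decay (automatic, since every $\lap^i u$ is continuous on $\overline{B_R}$ and the weight exponents never exceed $\alpha<N$) and the $C^2$ regularity needed for (\ref{App HR 1 to 2}) (available exactly when it is used, since $u\in C_{c,{\rm rad}}^{k+1}$ at that stage of the induction). I would also remark in passing that, since the factors $N(p-1)+\alpha-2(j+1)p$ may be negative, $A_{k,p,\alpha}^p$ is to be read as $|A_{k,p,\alpha}|^p$, consistent with the absolute value already present in (\ref{App HR 1 to 2}), whereas the factors $N-\alpha+2jp$ are positive because $\alpha<N$.
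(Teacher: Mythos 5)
Your proposal is correct and takes essentially the same route the paper intends: the corollary is stated immediately after Proposition \ref{Prop WHR} precisely so that it follows by iterating (\ref{App zero O}) (peeling one radial derivative, as you do at the even-to-odd step) and (\ref{App HR 1 to 2}) (two derivatives, at the odd-to-even step), the factors assembling into $A_{k,p,\alpha}$ by its recursive product structure. Your side remark that $A_{k,p,\alpha}^p$ should be read as $|A_{k,p,\alpha}|^p$ when $\alpha$ is small enough to make a factor $N(p-1)+\alpha-2(j+1)p$ negative is a fair clarification, consistent with the absolute values appearing in Proposition \ref{Prop WHR}.
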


%%%%%%%%%%%%%%%%%%%%%%%%%%%%%%%%%%%%%

Next, we show one dimensional improved Hardy inequality (\ref{1dim IWH}). Although the proof is essentially the same as it of Theorem \ref{T IH remainder 2}, we give the proof here again.  

\begin{proposition}\label{Prop IWHR}
Let $\gamma >0, \,b < -1, a+1 + (b+1)\gamma \ge 0,$ and $1\le p< \infty$. 
The inequality
\begin{align}\label{1dim IWH}%Kufner ref
\( \,-\frac{b+1}{p} \gamma \,\)^p \int_0^R r^{a} \( 1- \( \frac{r}{R} \)^\gamma \)^b |w(r) |^p\,dr 
\le \int_0^R r^{a+p} \( 1- \( \frac{r}{R} \)^\gamma \)^{b+p} | w'(r)|^p \,dr   
\end{align}
holds for any $w \in C^1 (0, R)\,\text{with}\lim_{r \to R-0} \( R- r \)^{b+1} |w(r)|^p = \lim_{r \to +0} r^{a+1} |w(r)|^p =0$. Furthermore, if we assume that the equality of (\ref{1dim IWH}) is attained by some non-zero function $w$, then $a+1 + (b+1)\gamma = 0$ and $w(r)= C\( \( \frac{r}{R} \)^{-\gamma} -1 \)^{-\frac{b+1}{p}}$ a.e. in $r \in (0,R)$ for some constant $C \not= 0$, whose integrals on both sides in (\ref{1dim IWH}) diverge.
\end{proposition}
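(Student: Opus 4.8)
The plan is to mimic the proof of Theorem \ref{T IH remainder 2}, carrying out an explicit integration by parts in one variable. First I would set $R=1$ without loss of generality (the general case follows by the scaling $r\mapsto r/R$), and I would reduce to the case $b+1\neq 0$, which holds automatically since $b<-1$. The starting point is the elementary identity
\begin{align*}
\frac{d}{dr}\left[\(r^{-\gamma}-1\)^{b+1}\right] = -(b+1)\gamma\, r^{-1+(-b-1)\gamma}\(1-r^\gamma\)^{b}\,r^{(b+1)\gamma}\cdot(\text{sign bookkeeping}),
\end{align*}
more precisely $\frac{d}{dr}\left[(1-r^\gamma)^{b+1}\,r^{-(b+1)\gamma}\right]$ reorganised so that $r^a(1-r^\gamma)^b$ appears as a derivative of $r^{a+1+(b+1)\gamma}(1-r^\gamma)^{b+1}$ up to a lower-order term. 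Writing $\int_0^1 r^a(1-r^\gamma)^b|w|^p\,dr$ via this derivative and integrating by parts (the boundary terms vanish by the hypotheses $\lim_{r\to1-0}(1-r)^{b+1}|w|^p=0$ and $\lim_{r\to+0}r^{a+1}|w|^p=0$, together with $a+1+(b+1)\gamma\ge 0$), I obtain
\begin{align*}
\(-\tfrac{b+1}{p}\gamma\)\int_0^1 r^a(1-r^\gamma)^b|w|^p\,dr = \int_0^1 |\xi|^{p-2}\xi\eta\,dr - \tfrac{a+1+(b+1)\gamma}{p}\int_0^1 r^a(1-r^\gamma)^{b+1}|w|^p\,dr,
\end{align*}
where $\xi = w\,r^{a/p}(1-r^\gamma)^{b/p}$ and $\eta = \frac{p}{-(b+1)\gamma}(-w')\,r^{(a+p)/p}(1-r^\gamma)^{(b+p)/p}$.

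Next I would apply Young's inequality (equivalently Hölder) in the form $|\xi|^{p-2}\xi\eta \le \frac1p|\eta|^p+\frac{p-1}{p}|\xi|^p$, which is exactly $R_p(\xi,\eta)\ge 0$ from Theorem \ref{T IH remainder 2}. Substituting and using $a+1+(b+1)\gamma\ge 0$ to discard the last (nonpositive, after moving sides) term, the $\frac{p-1}{p}|\xi|^p$ term cancels against part of the left side and one is left with $\(-\tfrac{b+1}{p}\gamma\)^p\int r^a(1-r^\gamma)^b|w|^p\,dr \le \int r^{a+p}(1-r^\gamma)^{b+p}|w'|^p\,dr$, which is (\ref{1dim IWH}). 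For $p=1$ the inequality $|\xi|^{p-2}\xi\eta\le|\eta|$ is trivial and the argument still goes through.

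For the equality/attainability statement: equality forces $a+1+(b+1)\gamma=0$ (so the discarded term was zero) and $R_p(\xi,\eta)=0$ a.e., i.e. $\xi=\eta$, which is the first-order linear ODE $-w'(r)=\frac{-(b+1)\gamma}{p}\frac{w(r)}{r(1-r^\gamma)}$; separating variables and integrating (using $\frac{1}{r(1-r^\gamma)}=\frac{1}{r}+\frac{\gamma r^{\gamma-1}}{1-r^\gamma}\cdot\frac1\gamma$ type partial fractions) yields $w(r)=C(r^{-\gamma}-1)^{-(b+1)/p}$. Finally I would check that for such $w$ with $C\neq0$ both integrals diverge: near $r=1$ one has $(1-r^\gamma)^{-(b+1)/p}$ and $|w'|\sim (1-r^\gamma)^{-(b+1)/p-1}$, so the right-hand integrand behaves like $(1-r^\gamma)^{b+p}\cdot(1-r^\gamma)^{-(b+1)-p}=(1-r^\gamma)^{-1}$, which is non-integrable, and similarly for the left side; this forces the equality case to be vacuous in the relevant function space. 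The only mild obstacle is the careful bookkeeping of exponents and signs in the integration-by-parts identity and in the vanishing of boundary terms, but this is routine; no genuine difficulty is expected since the structure parallels Theorem \ref{T IH remainder 2} exactly.
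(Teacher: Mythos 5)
Your plan is correct and follows the same integration-by-parts skeleton, but the pointwise inequality you invoke gives a tangibly cleaner equality analysis than the paper's printed proof. The paper bounds the cross term via H\"older's inequality; since H\"older equality determines only the \emph{modulus} of $w/(r(1-r^\gamma)w')$, the paper must first reduce to nonnegative $w$ (replacing $w$ by $|w|$) and then argue, by a truncation argument (flattening $w$ over any interval where it dips and returns to the same value), that an extremal $w$ is non-increasing; only with $w\ge 0$, $w'\le 0$ in hand does H\"older equality produce the signed ODE. Your plan instead uses the $R_p$ decomposition $|\xi|^{p-2}\xi\eta=\frac1p|\eta|^p+\frac{p-1}{p}|\xi|^p-R_p(\xi,\eta)$ of Theorem~\ref{T IH remainder 2}, and since $R_p(\xi,\eta)=0$ already forces the \emph{signed} equality $\xi=\eta$ (for $p>1$), the ODE $-w'=\frac{-(b+1)\gamma}{p}\,\frac{w}{r(1-r^\gamma)}$ drops out at once and the nonnegativity/monotonicity reduction is skipped. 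The paper itself flags, just before the proposition, that the proof is ``essentially the same'' as that of Theorem~\ref{T IH remainder 2}, which does use $R_p$, so your route is faithful to the author's stated intent while being shorter than the version actually printed; the trade-off is that the H\"older route is slightly more self-contained, needing no appeal to the integral representation of $R_p$.

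Two small slips to fix when you write it out. With your definitions of $\xi$ and $\eta$, the intermediate identity should read
\begin{align*}
\int_0^1 r^a(1-r^\gamma)^b|w|^p\,dr = \int_0^1 |\xi|^{p-2}\xi\eta\,dr - \frac{a+1+(b+1)\gamma}{-(b+1)\gamma}\int_0^1 r^a(1-r^\gamma)^{b+1}|w|^p\,dr,
\end{align*}
with coefficient $1$ (not $-\frac{b+1}{p}\gamma$) on the left and $\frac{a+1+(b+1)\gamma}{-(b+1)\gamma}$ (not $\frac{a+1+(b+1)\gamma}{p}$) on the subtracted term: the prefactor $\frac{p}{-(b+1)\gamma}$ inside $\eta$ and the $p$ from differentiating $|w|^p$ are both absorbed into $\int|\xi|^{p-2}\xi\eta\,dr$. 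This does not affect the conclusion, since $\int|\xi|^p$ equals the left-hand integral of (\ref{1dim IWH}) and $\int|\eta|^p=\bigl(\frac{p}{-(b+1)\gamma}\bigr)^p$ times the right-hand one. Also, the boundary terms from integration by parts vanish from the two limit hypotheses alone (using $1-r^\gamma\sim\gamma(1-r)$ near $r=1$); the condition $a+1+(b+1)\gamma\ge 0$ is needed only to discard the subtracted integral, not for the boundary terms.
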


\begin{proof}
For the simplicity, we set $R=1$. Then we have
\begin{align*}
&-(b+1) \gamma \int_0^1 r^{a} ( 1- r^\gamma )^b |w(r) |^p\,dr 
= \int_0^1 r^{a+1 +(b+1)\gamma} \left[ ( r^{-\gamma} -1 )^{b+1} \right]' |w(r) |^p\,dr \\
&=-p \int_0^1 r^{a+1} ( 1-r^{\gamma} )^{b+1} |w(r) |^{p-2} w(r) w'(r) \,dr - (a+1 +(b+1)\gamma) \int_0^1 r^{a} ( 1- r^\gamma )^{b+1} |w(r) |^p\,dr 
\end{align*}
On the last equality, we used $\lim_{r \to 1-0} \( 1- r \)^{b+1} |w(r)|^p =0 = \lim_{r \to +0} r^{a+1} |w(r)|^p =0$. 
Since $a+1 + (b+1)\gamma \ge 0$, we have
\begin{align}\label{Holder}
&\int_0^1 r^{a} ( 1- r^\gamma )^b |w(r) |^p\,dr \notag\\
&\le \int_0^1 r^{a+1} ( 1-r^{\gamma} )^{b+1} |w(r) |^{p-2} w(r) \( \frac{p}{(b+1)\gamma} w'(r) \) \,dr \notag \\
&\le \( \frac{p}{(b+1)\gamma} \) \( \int_0^1 r^{a} ( 1- r^\gamma )^b |w(r) |^p\,dr  \)^{\frac{p-1}{p}} \( \int_0^1 r^{p+a} ( 1- r^\gamma )^{p+b} |w'(r) |^p\,dr  \)^{\frac{1}{p}} 
\end{align}
which implies (\ref{1dim IWH}). Assume that there exists an extremal function $w \not= 0$ which attains the equality of (\ref{1dim IWH}). Then we see that $a+1 + (b+1)\gamma = 0$ from the above calculations. Since $|w|$ is also an extremal function, we may assume that $w$ is nonnegative. Besides, if there exists a interval $(A, B) \subset (0,1)$ such that $w(r) < w(A) = w(B)$ for any $r \in (A, B)$, set $\tilde{w} (r) = w(r)$ for $r \in (0,1) \setminus [A,B]$ and $\tilde{w} (r) = w(A) = w(B)$ for $r \in (A, B)$. Then we see that
\begin{align*}
\int_0^1 r^{a} \( 1- r^\gamma \)^b |w(r) |^p\,dr 
&< \int_0^1 r^{a} \( 1- r^\gamma \)^b |\tilde{w}(r) |^p\,dr, \\ 
 \int_0^1 r^{a+p} \( 1- r^\gamma \)^{b+p} | w'(r)|^p \,dr   
&> \int_0^1 r^{a+p} \( 1- r^\gamma \)^{b+p} | \tilde{w}'(r)|^p \,dr
\end{align*}
which contradicts that $w$ is an extremal function. Therefore, we may assume that $w$ is nonnegative and non-increasing. 
From the equality condition of the H\"older inequality in (\ref{Holder}), 
\begin{align*}
w' (r) = \frac{(b+1) \gamma}{p} \frac{w(r)}{r \( 1- r^\gamma\)}
\end{align*}
which implies that $w(r)= C\( r^{-\gamma} -1 \)^{-\frac{b+1}{p}}$ a.e. in $r \in (0,R)$ and for some constant $C > 0$, where $\gamma = - \frac{a+1}{b+1}$. Furthermore, we can check that
\begin{align*}
\int_0^1 r^{a+p} \( 1- r^\gamma \)^{b+p} | w'(r)|^p \,dr 
&= \( \frac{(b+1)}{p} \gamma \)^p\int_0^1 r^{a} \( 1- r^\gamma \)^b |w(r) |^p\,dr\\
&=\( \frac{(b+1)}{p} \gamma \)^p C^p \int_0^1 r^{-1} \( 1- r^\gamma \)^{-1 } \,dr = \infty.
\end{align*}
\qed
\end{proof}

As a corollary of Proposition \ref{Prop IWHR}, we can obtain the following in the same way as the proof of (\ref{App HR 1 to 2}).

\begin{corollary}\label{Cor IHR 1 to 2}
Let $p \ge 1, \beta >1, \gamma >0$, and $\alpha \le N - (\beta -1)\gamma - (N-1)p$. Then the inequality
\begin{align}\label{IHR 1 to 2}
\left( \frac{\beta -1}{p} \gamma \right)^p \int_{B_R} \frac{|\nabla u|^p}{|x|^\alpha \( 1- \( \frac{|x|}{R} \)^\gamma \)^\beta} \,dx 
\le \int_{B_R} \frac{|\lap u|^p}{|x|^{\alpha -p} \( 1- \( \frac{|x|}{R} \)^\gamma \)^{\beta -p}} \,dx
\end{align}
holds for any $u \in C_{c,{\rm rad}}^2 (B_R)$. 
\end{corollary}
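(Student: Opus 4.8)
The plan is to reduce the inequality (\ref{IHR 1 to 2}) to the one dimensional improved Hardy inequality (\ref{1dim IWH}) of Proposition \ref{Prop IWHR}, exactly as the Hardy--Rellich inequality (\ref{App HR 1 to 2}) was deduced from (\ref{App zero O}). Since $u$ is radially symmetric we write $u = u(r)$ with $r = |x|$, and then
\begin{equation*}
\lap u = u'' + \frac{N-1}{r}\, u' = \frac{1}{r^{N-1}}\bigl( r^{N-1} u' \bigr)', \qquad |\nabla u| = |u'(r)|.
\end{equation*}
Putting $w(r) = r^{N-1} u'(r)$ gives $|\nabla u| = r^{1-N}|w(r)|$ and $|\lap u| = r^{1-N}|w'(r)|$, so that, passing to polar coordinates, both sides of (\ref{IHR 1 to 2}) turn into one dimensional integrals in $w$ and $w'$ multiplied by $\omega_{N-1}$.

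Next I would apply Proposition \ref{Prop IWHR} to the function $w$ with the parameters
\begin{equation*}
b = -\beta, \qquad a = (N-1)(1-p) - \alpha.
\end{equation*}
A direct computation of the powers of $r$ and of $1-(r/R)^\gamma$ shows that $r^{a+p}(1-(r/R)^\gamma)^{b+p}$ is exactly the weight multiplying $|w'(r)|^p$ obtained from the right-hand side of (\ref{IHR 1 to 2}), that $r^{a}(1-(r/R)^\gamma)^{b}$ is the weight multiplying $|w(r)|^p$ coming from the left-hand side, and that $\bigl( -\tfrac{b+1}{p}\gamma \bigr)^p = \bigl( \tfrac{\beta-1}{p}\gamma \bigr)^p$. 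Hence, once the hypotheses of Proposition \ref{Prop IWHR} are checked, (\ref{1dim IWH}) applied to $w$ is exactly (\ref{IHR 1 to 2}).

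It remains to verify those hypotheses. The requirement $b < -1$ is the assumption $\beta > 1$, and $a + 1 + (b+1)\gamma \ge 0$ rearranges to $N - (N-1)p - \alpha - (\beta - 1)\gamma \ge 0$, i.e.\ to the standing assumption $\alpha \le N - (\beta - 1)\gamma - (N-1)p$. For the boundary conditions: since $u \in C^2_{c,{\rm rad}}(B_R)$ vanishes near $\pd B_R$, the function $w$ vanishes near $r = R$, so $\lim_{r \to R-0}(R-r)^{b+1}|w(r)|^p = 0$; near $r = 0$, $u'(0) = 0$ forces $u'(r) = O(r)$, so $w(r) = O(r^N)$ and $r^{a+1}|w(r)|^p = O(r^{\,N + p - \alpha}) \to 0$ because $\alpha < N$ (which again follows from the standing assumption, as $(\beta-1)\gamma > 0$). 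With this, (\ref{1dim IWH}) applied to $w$ and multiplied by $\omega_{N-1}$ yields (\ref{IHR 1 to 2}).

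No essential obstacle is expected here: the one point deserving care is the exponent bookkeeping, namely confirming that the admissibility condition $a + 1 + (b+1)\gamma \ge 0$ of Proposition \ref{Prop IWHR} reproduces precisely the hypothesis $\alpha \le N - (\beta - 1)\gamma - (N-1)p$ rather than a stronger restriction. Beyond the reduction to one dimension no new idea is required.
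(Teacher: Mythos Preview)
Your proposal is correct and is exactly the approach the paper intends: it states that Corollary \ref{Cor IHR 1 to 2} follows from Proposition \ref{Prop IWHR} ``in the same way as the proof of (\ref{App HR 1 to 2})'', i.e.\ by substituting $w(r)=r^{N-1}u'(r)$ and matching exponents. Your bookkeeping (in particular $a=(N-1)(1-p)-\alpha$, $b=-\beta$, and the verification that $a+1+(b+1)\gamma\ge 0$ is precisely the hypothesis on $\alpha$) is accurate; the boundary condition at $r=0$ can in fact be obtained already from the boundedness of $u'$, giving $r^{a+1}|w(r)|^p=O(r^{N-\alpha})\to 0$, so your use of $u'(0)=0$ is a slight overkill but certainly valid.
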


\begin{remark}\label{Rem assumption}
Due to the assumption $\alpha \le N - (\beta -1)\gamma - (N-1)p$ in the inequality (\ref{IHR 1 to 2}), we cannot show $L^p (p\not= 2)$ version of the inequality (\ref{IR gene b p=2}) in the same way as the proof in \S \ref{S IR rad}.
\end{remark}

%%%%%%%%%%%%%%%%%%%%%%%%%%%%%%%%%%%%%%%%%%%%

Finally, we calculate $\nabla^k \left[ \( 1- |x|^\gamma \)^A \, \right]$ to show Theorems in \S \ref{Sec Higher}.

\begin{proposition}\label{prop log}
Let $k, m \in \N$. 
\begin{align}\label{log e}
&\nabla^k \left[ \( 1- |x|^\gamma \)^A \, \right] 
=  \sum_{j=1}^{k} C_{k, j}  \,
|x|^{j\gamma -k} (1-|x|^\gamma )^{A-j} 
\begin{cases}
1 \quad &\text{if} \,\,\, k=2m,\\
\frac{x}{|x|} &\text{if} \,\,\, k=2m+1,
\end{cases}
%\label{log o}
%&\nabla \lap^m \left[ \( 1- |x|^\gamma \)^A \, \right] 
%=  \sum_{j=0}^{2m} D_{m, j} \left\{ \prod_{i=0}^{2m -j}  (\alpha - i) \right\} 
%|x|^{-2m-2} x \( \log \frac{R}{|x|} \)^{\alpha -2m +j -1} 
\end{align}
where $\{ \, C_{k,j}\, \}_{j=1}^k$ is given by 
\begin{align*}
C_{k,k} &= (-\gamma)^k \prod_{\ell =1}^k (A-\ell +1) \,\, (k \ge 1),\\
C_{k,1} &= 
\begin{cases}
-A \prod_{\ell =1}^m \, (\gamma -2\ell +2) (N+\gamma -2\ell) \quad &\text{if}\,\, k=2m,\\
-A (\gamma -2m) \prod_{\ell =1}^m \, (\gamma -2\ell +2) (N+\gamma -2\ell) &\text{if}\,\, k=2m+1,
\end{cases}
\,\, (k \ge 2)\\
C_{k,j} &= \begin{cases}
C_{k-1, j} (N+j\gamma -k) -C_{k-1, j-1} (A-j+1) \quad &\text{if}\,\, k=2m,\\
C_{k-1, j} (j\gamma -k+1) -C_{k-1, j-1} (A-j+1) \gamma &\text{if}\,\, k=2m+1,
\end{cases}\\
&{}\hspace{20em} (j=2, \cdots, k-1, \,k \ge 3)
\end{align*}
\end{proposition}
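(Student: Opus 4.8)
The plan is to prove the identity (\ref{log e}) by induction on $k$, reducing the two cases $k=2m$ and $k=2m+1$ to one elementary computation. The two facts I would lean on are, for a radial scalar $g=g(|x|)$ and a radial field $h(|x|)\frac{x}{|x|}$,
\[
\nabla\big[g(|x|)\big]=g'(|x|)\,\frac{x}{|x|},\qquad
{\rm div}\Big[h(|x|)\,\frac{x}{|x|}\Big]=h'(|x|)+\frac{N-1}{|x|}\,h(|x|).
\]
Since $\nabla^{k}=\lap^{m}$ when $k=2m$ and $\nabla^{k}=\nabla\lap^{m}$ when $k=2m+1$, passing from $\nabla^{k}$ to $\nabla^{k+1}$ means applying the first identity when $k$ is even (turning a scalar into a field carrying the factor $\frac{x}{|x|}$) and the second when $k$ is odd (turning such a field back into a scalar); this alone accounts for the alternation $1\leftrightarrow\frac{x}{|x|}$ in (\ref{log e}). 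On the building blocks one needs only
\[
\frac{d}{dr}\Big[r^{j\gamma-k}(1-r^{\gamma})^{A-j}\Big]
=(j\gamma-k)\,r^{j\gamma-(k+1)}(1-r^{\gamma})^{A-j}
-\gamma(A-j)\,r^{(j+1)\gamma-(k+1)}(1-r^{\gamma})^{A-(j+1)},
\]
an instance of the product and chain rules, valid on $0<|x|<1$; this is all that is required, (\ref{log e}) being a pointwise identity away from the singularities.

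First I would dispose of the base case $k=1$ directly, $\nabla[(1-|x|^{\gamma})^{A}]=-A\gamma\,|x|^{\gamma-1}(1-|x|^{\gamma})^{A-1}\frac{x}{|x|}$, i.e. $C_{1,1}=-A\gamma$. For the inductive step, assume (\ref{log e}) at level $k$ and apply the appropriate one of the two identities above to each summand $C_{k,j}\,|x|^{j\gamma-k}(1-|x|^{\gamma})^{A-j}$. By the differentiation rule each summand splits into a part proportional to $|x|^{j\gamma-(k+1)}(1-|x|^{\gamma})^{A-j}$ and a part proportional to $|x|^{(j+1)\gamma-(k+1)}(1-|x|^{\gamma})^{A-(j+1)}$; in the odd case the operator $h\mapsto h'+\frac{N-1}{|x|}h$ contributes in addition $(N-1)\,|x|^{j\gamma-(k+1)}(1-|x|^{\gamma})^{A-j}$ from the $j$-th summand. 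Collecting, for each fixed $j$, the coefficient of $|x|^{j\gamma-(k+1)}(1-|x|^{\gamma})^{A-j}$ --- namely the ``first part'' of the $j$-th summand plus the ``second part'' of the $(j-1)$-th summand --- produces precisely the two-term recursion for $C_{k+1,j}$. With the convention $C_{k,0}=C_{k,k+1}=0$ the same recursion is correct at the extreme indices $j=1$ and $j=k+1$, so the boundary terms $C_{k,1}$ and $C_{k,k}$ need no separate treatment here.

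It then remains to read off the closed forms from the recursions. Taking $j=k$, the term referring to $C_{k-1,k}$ drops out because $C_{k-1,k}=0$, leaving $C_{k,k}=-\gamma(A-k+1)\,C_{k-1,k-1}$, so $C_{k,k}=(-\gamma)^{k}\prod_{\ell=1}^{k}(A-\ell+1)$ by a one-line induction from $C_{1,1}=-A\gamma$. For $C_{k,1}$ one combines two consecutive steps: passing from an even level to an odd one the recursion gives $C_{2m+1,1}=(\gamma-2m)\,C_{2m,1}$, and passing from an odd level to an even one it gives $C_{2m,1}=(N+\gamma-2m)\,C_{2m-1,1}$; iterating these alternately from $C_{1,1}=-A\gamma$ yields the stated products. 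The only real work is bookkeeping --- keeping straight which of the two differential identities is active at each parity, and reading the two-term recursion correctly at $j=1$ and $j=k$ --- and the zero-boundary convention disposes of it; I do not expect any genuine obstacle.
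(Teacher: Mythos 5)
Your proof is correct and follows essentially the same route as the paper's: induction on $k$, differentiating term-by-term using the radial gradient/divergence identities, and collecting coefficients of $|x|^{j\gamma-(k+1)}(1-|x|^\gamma)^{A-j}$ to recover the two-term recursion, from which the closed forms for $C_{k,k}$ and $C_{k,1}$ follow by iteration. The only cosmetic difference is your convention $C_{k,0}=C_{k,k+1}=0$, which absorbs the edge terms $j=1$ and $j=k+1$ that the paper instead peels off separately in its displayed computation.
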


\begin{proof}%(Proof of Proposition \ref{cal log})
Since
\begin{align*}
&\nabla \left[  \( 1-|x|^\gamma \)^A \right]
= -A\gamma \,|x|^{\gamma -2} x \,(1-|x|^\gamma )^{A-1}, \\
&\lap \left[ \( 1-|x|^\gamma \)^A \right] 
= -A\gamma  (N+\gamma -2) \,|x|^{\gamma -2}  \,(1-|x|^\gamma )^{A-1}
+ A (A-1) \gamma^2 \,|x|^{2\gamma -2}  \,(1-|x|^\gamma )^{A-2},
\end{align*}
we see that (\ref{log e}) holds when $k=1, 2$. Now we assume that (\ref{log e}) holds for $k=2m \ge 2$. Then we have
\begin{align*}
&\nabla^{k+1} \left[ \( 1-|x|^\gamma \)^A \right] 
= \nabla \lap^{m} \left[ \( 1-|x|^\gamma \)^A \right] 
= \sum_{j=1}^{k} C_{k, j}  \,
\nabla \left[ \,|x|^{j\gamma -k} (1-|x|^\gamma )^{A-j} \,\right] \\
&= \sum_{j=1}^{2m } C_{2m, j} \,(j\gamma -2m)\, |x|^{j\gamma -2m -2} x\, (1-|x|^\gamma )^{A-j} - C_{2m, j} \,\gamma (A-j) \, |x|^{j\gamma +\gamma -2m -2} x\, (1-|x|^\gamma )^{A-j-1} \\
&= C_{2m, 1} \,(\gamma -2m)\, |x|^{\gamma - (k+1)}\, (1-|x|^\gamma )^{A-1} \( \frac{x}{|x|} \)\\
&+ \sum_{j=2}^{2m} \left\{ C_{2m,j} \,(j\gamma -2m)  - C_{2m, j-1} \,\gamma (A-j+1) \right\} |x|^{j\gamma - (k+1)} \, (1-|x|^\gamma )^{A-j} \( \frac{x}{|x|} \) \\
&-C_{2m, 2m} \,\gamma \,(A -2m)\, |x|^{(k+1)\gamma - (k+1)}\, (1-|x|^\gamma )^{A-(k+1)} \( \frac{x}{|x|} \)\\
&=\sum_{j=1}^{k+1} C_{k+1, j}  \,|x|^{j\gamma -(k+1)} (1-|x|^\gamma )^{A-j} \( \frac{x}{|x|} \) 
\end{align*}
which implies that (\ref{log e}) holds for $k+1$. 
In the same way, we can also show (\ref{log e}) in the case where $k=2m +1$. 
Thus (\ref{log e}) holds for any $k \in \N$. 
\qed
\end{proof}

%%%%%%%%%%%%%%%%%%%%%%%%%%%%%%%%%%%%%%

\begin{acknowledgements}
The author thanks to Prof. T. Ozawa (Waseda University) for his advice. 
This work was partly supported by Osaka City University Advanced
Mathematical Institute (MEXT Joint Usage/Research Center on Mathematics
and Theoretical Physics JPMXP0619217849). 
The author was supported by JSPS KAKENHI Early-Career Scientists, No. JP19K14568.
\end{acknowledgements}

% Authors must disclose all relationships or interests that 
% could have direct or potential influence or impart bias on 
% the work: 
%
% \section*{Conflict of interest}
%
% The authors declare that they have no conflict of interest.

% BibTeX users please use one of
%\bibliographystyle{spbasic}      % basic style, author-year citations
%\bibliographystyle{spmpsci}      % mathematics and physical sciences
%\bibliographystyle{spphys}       % APS-like style for physics
%\bibliography{}   % name your BibTeX data base

% Non-BibTeX users please use

\end{document}